\documentclass{amsart}
\usepackage{mathpazo, url}
\usepackage{microtype}
\usepackage{amssymb, amsxtra, enumerate, xspace, graphicx}
\usepackage[latin1]{inputenc} \usepackage[all]{xy} \SelectTips{cm}{}


\numberwithin{equation}{section}

\setcounter{tocdepth}{1}

\numberwithin{subsection}{section}

\allowdisplaybreaks[1]


\newenvironment{enumeratea} {\begin{enumerate}[\upshape(a)]} {\end{enumerate}}

\newenvironment{enumeratei} {\begin{enumerate}[\upshape(i)]} {\end{enumerate}}

\newtheorem*{namedtheorem}{\theoremname} \newcommand{\theoremname}{testing}

\newtheorem{theorem}{Theorem}[section]
\newtheorem{proposition}[theorem]{Proposition}
\newtheorem{proposition-definition}[theorem] {Proposition-Definition}
 \newtheorem{lemma}[theorem]{Lemma}
\newtheorem{conjecture}[theorem]{Conjecture}

\theoremstyle{definition} \newtheorem{definition}[theorem]{Definition}
 \newtheorem{example}[theorem]{Example}
 \newtheorem{remark}[theorem]{Remark}

\theoremstyle{remark} \newtheorem*{claim}{Claim}


 \newcommand\Forms{\operatorname{Forms}}
\newcommand\diag{\operatorname{diag}} \newcommand\Char{\operatorname{char}}
\newcommand\Gr{\operatorname{Gr}}
\newcommand\Hypersurf{\operatorname{Hypersurf}} 
\newcommand{\GL}{\mathrm{GL}} 
\newcommand{\SL}{\mathrm{SL}} \newcommand{\PGL}{\mathrm{PGL}}


 \newcommand\cB{\mathcal{B}}
 
 \newcommand\cF{\mathcal{F}}
\newcommand\cG{\mathcal{G}} 
\newcommand\cI{\mathcal{I}} 
 
\newcommand\cM{\mathcal{M}} \newcommand\cN{\mathcal{N}}
\newcommand\cO{\mathcal{O}}

\newcommand\cU{\mathcal{U}} 
 \newcommand\cX{\mathcal{X}}
\newcommand\cY{\mathcal{Y}} 

\renewcommand\AA{\mathbb{A}} 
  
 \newcommand\GG{\mathbb{G}}

 \newcommand\PP{\mathbb{P}}

\newcommand\bM{\mathbf{M}}  
  
  \newcommand\bU{\mathbf{U}}
  \newcommand\bX{\mathbf{X}}
\newcommand\bY{\mathbf{Y}}

\newcommand\fM{\mathfrak{M}} \renewcommand\frm{\mathfrak{m}}


\newcommand\arr{\ifinner\to\else\longrightarrow\fi}

\newcommand\arrto{\ifinner\mapsto\else\longmapsto\fi}

\renewcommand\H{\operatorname{H}}

\newcommand\eqdef{\overset{\mathrm{\scriptscriptstyle def}} =}

\newcommand\into{\hookrightarrow}

\renewcommand\th{^\text{th}}

\def\displaytimes_#1{\mathrel{\mathop{\times}\limits_{#1}}}

\def\displayotimes_#1{\mathrel{\mathop{\bigotimes}\limits_{#1}}}

\newcommand\spec{\operatorname{Spec}}

\newcommand\cd{\operatorname{cd}}

\newcommand\Spec{\operatorname{Spec}}

\newcommand\codim{\operatorname{codim}}

\newcommand{\proj}{\operatorname{Proj}}

\newcommand\generate[1]{\langle #1 \rangle}

\renewcommand\projlim{\varprojlim}

\newcommand\dash{\nobreakdash-\hspace{0pt}}

\newdir{ >}{{}*!/-5pt/@{>}}

\newcommand\double{\mathbin{\rightrightarrows}}

\newcommand\doublelong[2]{\mathbin{\xymatrix{{}\ar@<3pt>[r]^{#1}
\ar@<-3pt>[r]_{#2}&}}}

\newcommand{\underaut} {\mathop{\underline{\mathrm{Aut}}}\nolimits}

\newlength{\ignora}

\renewcommand{\setminus}{\smallsetminus}

\newcommand{\ind}{\operatorname{ind}}

\newcommand{\gm}{\GG_{\mathrm{m}}}
\newcommand{\ga}[1][\relax]{\GG_{\mathrm{a#1}}}


\newcommand{\ed}{\operatorname{ed}}
\newcommand{\ged}{\operatorname{g\mspace{2mu}ed}}
\newcommand{\field}[1]{(\mathrm{Field/#1})} \newcommand{\catset}{(\mathrm{Set})}

\newcommand{\ds}[1]{[\mspace{-2mu}[#1]\mspace{-2mu}]}
\newcommand{\trdeg}{\operatorname{tr\mspace{2mu}deg}}

\newcommand{\dm}{Deligne--Mumford\xspace}

\newcommand{\sym}{\operatorname{Sym}}

\newcommand{\ur}{\operatorname{R_{u}}}

\newcommand{\orb}{\operatorname{Orb}}

\newcommand{\Orb}{\operatorname{Orb}}

\newcommand{\Stab}{\operatorname{Stab}}


\begin{document}

\title[A genericity theorem and essential dimension of hypersurfaces]{A
genericity theorem for algebraic stacks\\and essential dimension of
hypersurfaces}

\author[Reichstein]{Zinovy Reichstein$^\dagger$}

\author[Vistoli]{Angelo~Vistoli$^\ddagger$}

\address[Reichstein]{Department of Mathematics\\ University of British
Columbia \\ Vancouver, B.C., Canada V6T 1Z2}
\email{reichst@math.ubc.ca}

\address[Vistoli]{Scuola Normale Superiore\\ Piazza dei Cavalieri 7\\ 56126
Pisa\\ Italy}
\email{angelo.vistoli@sns.it}

\begin{abstract} We compute the essential dimension of the functors $\Forms_{n,
d}$ and $\Hypersurf_{n, d}$ of equivalence classes of  homogeneous polynomials
in $n$ variables and hypersurfaces in $\PP^{n-1}$, respectively, over any base
field $k$ of characteristic $0$. Here two polynomials (or hypersurfaces) over
$K$ are considered equivalent if they are related by a linear change of
coordinates with coefficients in $K$. Our proof is based on a new Genericity
Theorem for algebraic stacks, which is of independent interest. As 
another application of the Genericity Theorem, we prove a new result
on the essential dimension of the stack of (not necessarily smooth)
local complete intersection curves.
\end{abstract}

\subjclass[2000]{Primary 14A20, 14J70}

\keywords{Essential dimension, hypersurface, genericity theorem, stack, gerbe}

\thanks{$^\dagger$Supported in part by NSERC Discovery and Accelerator
Supplement grants} \thanks{$^\ddagger$Supported in part by the PRIN Project
``Geometria sulle variet\`a algebriche'', financed by MIUR}

\date{March 19, 2011}

\maketitle

\setcounter{tocdepth}{1} \tableofcontents

\section{Introduction}

Let $k$ be a base field of characteristic $0$, $K/k$ be a field extension, and
$F(\underline{x})$ be a homogeneous polynomial (which we call a \emph{form}) of
degree $d$ in the $n$ variables $\underline{x} = (x_1, \dots, x_n)$, with
coefficients in $K$. We say that $F$ \emph{descends to an intermediate field $k
\subset K_0 \subset K$} if there exists a linear change of coordinates $g \in
\GL_{n}(K)$ such that every coefficient of $F(g \cdot \underline{x})$ lies in
$K_0$.

It is natural to look for a ``smallest" subfield $K_0$ to which a given form
$F(\underline{x})$ descends. A minimal such field $K_0$ with respect to
inclusion may not exist, so we ask instead for the minimal transcendence degree
$\trdeg_k K_0$. This number, called the \emph{essential dimension} $\ed_{k}F$ of
$F$, may be thought of as measuring the ``complexity" of $F$. A major goal of
this paper is to compute the maximum of $\ed_{k}F$, taken over all fields $K/k$
and all forms  $F(x_1, \dots, x_n)$ of degree $d$. This integer, usually called
the essential dimension $\ed_{k}\Forms_{n, d}$ of the functor of forms
$\Forms_{n, d}$ depends only on $n$ and $d$; it may be viewed as a measure of
complexity of all forms of degree $d$ in $n$ variables.

We will also be interested in a variant of this problem, where the form
$F(\underline{x}) \in K[x_1, \dots, x_n]$ of degree $d$ is replaced by the
hypersurface \begin{equation} \label{e.hypersurface} H \eqdef \{ (a_1: \dots :
a_n) \mid F(a_1, \dots, a_n)  = 0 \} \end{equation} in $\PP^{n-1}$. Here we say
that $H$ descends to $K_0$ if there exists a linear change of coordinates $g \in
\GL_n(K)$ and a scalar $c \in K^*$ such that every coefficient of $c F(g \cdot
\underline{x})$ lies in $K_0$. Once again, the essential dimension $\ed_k(H)$ of
$H$ is defined as the minimal value of $\trdeg_k K_0$, with the minimum taken
over all fields $K_0/k$ such that $H$ descends to $K_0$. We will be interested
in the essential dimension $\ed_k(\Hypersurf_{n, d})$, defined as the maximal
value of $\ed_k(H)$, where the maximum is taken over all $K/k$ and all forms
$F(\underline{x}) \in K[x_1, \dots, x_n]$ of degree $d$. Here $H$ is the zero
locus of $F$, as in~\eqref{e.hypersurface}.

The study of forms played a central role in 19th century algebra. The problems
of computing $\ed_{k}\Forms_{n, d}$ and $\ed_{k}\Hypersurf_{n, d}$ are quite
natural in this context. However, to the best of our knowledge, these questions
did not appear in the literature prior to the (relatively recent) work of G.
Berhuy and G. Favi, who showed that $\ed_{k}\Hypersurf_{3, 3} = 3$;
see~\cite{bf2}.

In this paper we compute 
$\ed_{k}\Forms_{n, d}$ and $\ed_{k}\Hypersurf_{n, d}$ for all $n, d \ge 1$.
Our main result is as follows.

\begin{theorem} \label{thm.main} Assume that $n \ge 2$ and $d \ge 3$ are
integers and $(n, d) \neq (2, 3)$, $(2, 4)$ or $(3, 3)$.  Then
\begin{enumeratea}
\item $\ed_{k}\Forms_{n, d} = \binom{n + d - 1}{d} - n^2 + 
\cd(\GL_n/\mu_d) + 1$\,.

\item $\ed_{k}\Hypersurf_{n, d} = \binom{n + d - 1}{d} - n^2 +
\cd(\GL_n/\mu_d)$\,.
\end{enumeratea} \end{theorem}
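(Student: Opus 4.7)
The plan is to realize $\Forms_{n,d}$ and $\Hypersurf_{n,d}$ as quotient stacks, apply the Genericity Theorem announced in the introduction to reduce the essential-dimension calculation to one on a residual gerbe at the generic point, and then evaluate that in terms of $\cd(\GL_n/\mu_d)$. By Hilbert 90, $\Forms_{n,d} \simeq [V/\GL_n]$ with $V = \sym^d(k^n)^\vee$ the space of forms (of dimension $\binom{n+d-1}{d}$) and $\GL_n$ acting by linear substitution, while $\Hypersurf_{n,d} \simeq [(V\setminus\{0\})/(\GL_n\times\gm)]$ with the extra $\gm$ rescaling. The center $\mu_d \subset \GL_n$ acts trivially on $V$; for $(n,d)$ outside the listed exceptions, classical results of Matsumura--Monsky show that a generic form has stabilizer exactly $\mu_d$ inside $\GL_n$, so $\GL_n/\mu_d$ acts generically freely and $[V/\GL_n]$ is a $\mu_d$-gerbe over the algebraic space $[V/(\GL_n/\mu_d)]$. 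A parallel analysis for $\Hypersurf_{n,d}$ identifies its effective quotient also with $\GL_n/\mu_d$, with one additional $\gm$-direction absorbed in the rigidification.

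Applying the Genericity Theorem gives
\[
\ed_k \Forms_{n,d} \;=\; \trdeg_k K \;+\; \ed_K \mathcal{G}_\eta,
\]
where $K = k(V)^{\GL_n/\mu_d}$ and $\mathcal{G}_\eta$ is the residual $\mu_d$-gerbe at the generic point, arising via the coboundary of $1 \to \mu_d \to \GL_n \to \GL_n/\mu_d \to 1$ applied to the versal $(\GL_n/\mu_d)$-torsor over $K$. The first summand equals $\binom{n+d-1}{d} - n^2$. For the second, a Brauer-theoretic argument identifies $\ed_K \mathcal{G}_\eta$ with $\cd(\GL_n/\mu_d) + 1$: the $\cd$ measures the minimum transcendence degree of a field splitting the associated Brauer class, and the $+1$ records the $\mu_d$-torsor datum needed to specify an object of the gerbe. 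This yields part (a). For part (b) the rigidification has dimension one less while the gerbe contribution is unchanged, producing the stated formula.

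The principal obstacle is the identification $\ed_K \mathcal{G}_\eta = \cd(\GL_n/\mu_d)+1$. Establishing it requires showing that the residual gerbe is versal, so it attains the maximum essential dimension among $\mu_d$-gerbes arising from $(\GL_n/\mu_d)$-torsors, and then relating its essential dimension to the canonical dimension of the group via Brauer-theoretic methods (the lower bound via an incompressibility-type argument, the upper bound via an explicit splitting field of the right transcendence degree). A secondary but nontrivial difficulty is verifying the generic-stabilizer-equals-$\mu_d$ claim in each case $(n,d)$ covered by the theorem, which requires a case-by-case analysis of automorphisms of hypersurfaces, especially near the exceptional boundary $(2,3),(2,4),(3,3)$.
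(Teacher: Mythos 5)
Your general framework is the right one---identify the functors as quotient stacks, pass to the residual gerbe at the generic point, and translate the gerbe's essential dimension into a canonical dimension via Brauer theory---but there are two substantive gaps and one misidentification.

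\textbf{The non-reductive locus.} You write that ``applying the Genericity Theorem gives $\ed_k \Forms_{n,d} = \trdeg_k K + \ed_K \mathcal{G}_\eta$.'' But Theorem~\ref{thm:genericity} only proves $\ed_k\xi \le \ged_k\cX$ for those $\xi$ with \emph{reductive} automorphism group; it yields the equality $\ed_k\cX = \ged_k\cX$ only when \emph{every} object over a field has reductive stabilizer. That hypothesis fails for $[A_{n,d}/\GL_n]$: plenty of forms (e.g.\ any form that is independent of a variable after a linear change of coordinates) have stabilizers containing a $\ga$, hence a nontrivial unipotent radical. The paper's whole Section~\ref{sect.non-reductive} exists precisely to close this gap: via the one-parameter subgroups $\omega_{\underline\lambda}$ and the specialness of the normalizers $N_{\underline\lambda}$ of their images in $\GL_n$, one shows (Theorem~\ref{thm:non-reductive}) that any $\xi$ with non-reductive automorphism group already satisfies $\ed_k\xi \le \binom{n+d-1}{d} - n^2$, which is strictly less than the announced generic essential dimension. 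Without such an argument, your equality is unjustified, and indeed the whole proof strategy hinges on dealing with this locus.

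\textbf{Amenability and part (b).} Your quotient model $[(V\setminus\{0\})/(\GL_n\times\gm)]$ also has the copy of $\gm$ given by $\lambda \mapsto (\lambda I_n, \lambda^{-d})$ acting trivially on $V$, so it is \emph{not} amenable, and the Genericity Theorem cannot be applied to it directly any more than to $[\PP(A_{n,d})/\GL_n]$. The paper circumvents this by working with the amenable stack $[\PP(A_{n,d})/\PGL_n]$ and then measuring how much essential dimension is added by the $\gm$-gerbe $[\PP(A_{n,d})/\GL_n] \to [\PP(A_{n,d})/\PGL_n]$; crucially, this requires the lower semicontinuity of index and exponent for Brauer classes of the fibers (Lemma~\ref{lem.index}), which bounds the canonical dimension of the gerbe at an arbitrary point by $\cd(\GL_n/\mu_d)$ (Proposition~\ref{prop.gerbes}). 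You will need a comparable argument; simply inserting an extra $\gm$ factor in the group does not make the problem go away.

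\textbf{Source of the discrepancy between (a) and (b).} You attribute the difference of $1$ to ``the rigidification having dimension one less'' for hypersurfaces, but this is backwards. Both $[A_{n,d}/\GL_n]$ and $[\PP(A_{n,d})/\GL_n]$ are gerbes over the \emph{same} base $[\PP(A_{n,d})/\PGL_n]$ of dimension $\binom{n+d-1}{d}-n^2$. The difference of $1$ is purely a gerbe effect: the first is a $\mu_d$-gerbe, for which $\ed_K = \cd\beta + 1$ (Proposition~\ref{prop.ed-gerbe}(b)), while the second is a $\gm$-gerbe, for which $\ed_K = \cd\alpha$ (Proposition~\ref{prop.ed-gerbe}(a)). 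Your statement that ``the gerbe contribution is unchanged'' is therefore false, and restoring the correct accounting is necessary for the formulas to come out right.

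Your identification $\ed_K\mathcal{G}_\eta = \cd(\GL_n/\mu_d) + 1$ in the $\mu_d$-gerbe case is correct and matches Propositions~\ref{prop.ed-gerbe} and~\ref{prop.generic-ed}, and your invocation of Matsumura--Monsky for generic stabilizers outside the exceptional $(n,d)$ is also on target. But the three issues above---particularly the first---are genuine gaps, not expository shortcuts.
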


The values of $\ed_{k}\Forms_{n, d}$ and $\ed_{k}\Hypersurf_{n, d}$
for $n, d \ge 1$ not covered by Theorem~\ref{thm.main}
are computed in Section~\ref{sect.small}; the results are
summarized in the following table.   

\begin{center}  
\vspace{0.2cm}
\begin{tabular}{|c| c| c| c|}

$n$ & $d$ & $\ed_{k} \Forms_{n, d}$ & $\ed_{k} \Hypersurf_{n, d}$ \\
\hline       
arbitrary & 1 & $0$ & $0$ \\
\hline       
1 & $\ge 2$ & $1$ & $0$ \\
\hline       
arbitrary & $2$ & $n$ & $n-1$ \\
\hline       
$2$ & $3$ & $2$ & $1$ \\
\hline       
$2$ & $4$ & $3$ & $2$ \\
\hline       
$3$ & $3$ & $4$ & $3$ \\
\hline       
\end{tabular}
\end{center}
\vspace{0.2cm}

The quantity $\cd(\GL_n/\mu_d)$ which appears in the statement of
Theorem~\ref{thm.main} is the canonical dimension of the algebraic 
group $\GL_n/\mu_d$. For the definition and basic 
properties of canonical dimension we refer the reader 
to~Section~\ref{sect.cd}; see also~\cite{ber, km1} for 
a more extensive treatment of this notion. The exact 
value of $\cd(\GL_n/\mu_d)$ is known in the case where $e \eqdef
\gcd(n, d)$ is a prime power $p^j$.  In this case \[  \cd(\GL_n/\mu_d) =
\begin{cases} \text{$p^i - 1$, if $j > 0$}, \\ \text{$0$, otherwise,}
\end{cases} \] where $p^i$ is the highest power of $p$ dividing $n$;
see~\cite[Section 11]{ber}.  More generally, suppose $e = p_1^{j_1} \dots
p_r^{j_r}$ is the prime decomposition of $e$ (with $j_1, \dots, j_r \ge 1$) and
$p_s^{i_s}$ is the highest power of $p_s$ dividing $n$. A conjecture of
J.-L.~Colliot-Th\'el\`ene, N.~A.~Karpenko, and A.~S.~ Merkurjev~\cite[(2)]{ctkm}
implies that \begin{equation} \label{conjecture-ctkm} \cd(\GL_n/\mu_d) = \sum_{s
= 1}^r (p_s^{i_s} - 1). \end{equation} This has only been proved if $e$ is a
prime power (as above) or $n = 6$~\cite[Theorem 1.3]{ctkm}. In these two cases
Theorem~\ref{thm.main} gives the exact value of $\ed_{k}\Forms_{n, d}$ and
$\ed_{k}\Hypersurf_{n, d}$. For other $n$ and $d$ Theorem~\ref{thm.main} reduces
the problems of computing $\ed_{k}\Forms_{n, d}$ and $\ed_{k}\Hypersurf_{n, d}$
to the problem of computing the canonical dimension $\cd(\GL_n/\mu_d)$. For
partial results on the latter problem, see~\cite[Section 11]{ber}.

The notions of essential dimension for forms and hypersurfaces 
are particular cases of Merkurjev's general definition of essential 
dimension of a functor~\cite{bf1}. A special case of this, 
upon which our approach is based, is the essential dimension 
of an algebraic stack. For background material
on this notion we refer the reader to~\cite{brv-jems}.
In particular, $\ed_{k}\Forms_{n, d} = \ed_{k} [A_{n, d}/\GL_n]$ 
and $\ed_{k}\Hypersurf_{n, d} = \ed_{k} [\PP(A_{n, d})/\GL_n]$,
where $A_{n, d}$ is the $\binom{n + d - 1}{d}$-dimensional affine space of forms
of degree $d$ in $n$ variables and $\PP(A_{n, d})$ is the associated $\binom{n +
d - 1}{d}-1$ dimensional projective space of degree $d$ hypersurfaces in
$\PP^{n-1}$.
 (Here, as in the rest of the paper, we will follow the classical
 convention of defining the projectivization $\PP(V)$ of a vector space $V$ over $k$ as
 the projective space of lines in $V$, that is, as $\proj \sym_{k} V^{\vee}$. In the present context, this seems more 
natural than Grothendieck's convention of defining $\PP(V)$ as $\proj \sym_{k} V$.) 
The group $\GL_n$ naturally acts on
these spaces, and $[A_{n, d}/\GL_n]$ and $[\PP(A_{n, d})/\GL_n]$ denote the
quotient stacks for these actions; see~\cite[Example 2.6]{brv-jems}.

The essential dimension of the ``generic hypersurface" of degree $d$ is
$\PP^{n-1}$, i.e., of the hypersurface  $H_{\rm gen}$ cut out
by the ``generic form"
\begin{equation} \label{e.generic-form} 
F_{\rm gen}(x_1, \dots, x_n) = \sum_{i_1 + \dots + i_n = d} 
a_{i_1, \dots, i_n} x_{1}^{i_1} \dots x_{n}^{i_n} = 0 \, ,
\end{equation} 
where $a_{i_1, \dots, i_n}$ are independent variables and $K$ is
the field generated by these variables over $k$, 
was computed in~\cite[Sections 14-15]{ber}. 
The question of computing the essential dimension of the generic
form $F_{\rm gen}$ itself was left open in~\cite{ber}. For $n$ and $d$ as 
in Theorem~\ref{thm.main} we will show that 
$\ed_{k}F_{\rm gen} = \ed_{k}H_{\rm gen} + 1$; 
see Proposition~\ref{prop.generic-ed}.

The key new ingredient in the proof of Theorem~\ref{thm.main} is the following
``Genericity Theorem". Let $\cX$ be a connected algebraic stack with
quasi-affine diagonal that is smooth of finite type over $k$, in which the
automorphism groups are generically finite (for sake of brevity, we say that
$\cX$ is \emph{amenable}). Then we can define the \emph{generic essential
dimension} of $\cX$, denoted by $\ged_{k}\cX$, as the supremum of the essential
dimensions of the \emph{dominant} points $\spec K \arr \cX$. If $\cX$ is
Deligne--Mumford, that is, if all stabilizers are finite, then 
$\ed_{k}\cX = \ged_{k}\cX$; 
see~\cite[Theorem~6.1]{brv-jems}. This result, which we called
the Genericity Theorem for \dm stacks in~\cite{brv-jems}, is not 
sufficient for the applications in the present paper.  Here 
we prove the following stronger theorem conjectured 
in~\cite[Question~6.6]{brv-jems}. 

\begin{theorem}\label{thm:genericity} Let $\cX$ be an amenable stack over $k$.
Let $L$ be a field extension of $k$, and $\xi$ be an object of $\cX(\spec L)$,
such that the automorphism group scheme $\underaut_{L}\xi$ is reductive. Then \[
\ed_{k}\xi \leq \ged_{k} \cX\,. \] In particular, if the automorphism group of
any object of $\cX$ defined over a field is reductive, then $\ed_{k}\cX =
\ged_{k} \cX$. 
\end{theorem}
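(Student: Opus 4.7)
The plan is to combine Luna's \'etale slice theorem, available thanks to the reductivity of $G \eqdef \underaut_{L}\xi$, with the Genericity Theorem for Deligne--Mumford stacks (\cite[Theorem 6.1]{brv-jems}, recalled above). The idea is to replace $\cX$ by an \'etale-local model near $\xi$ in which $\xi$ becomes a $G$-fixed point of a smooth $G$-scheme, and then to transfer the question to the DM setting via rigidification along the generic stabilizer, with reductivity of $G$ controlling the relevant orbit-specialization.

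\emph{Step 1 (Local model).} Since $\cX$ has quasi-affine diagonal and is of finite type, a Totaro-type argument presents an open substack of $\cX$ containing the image of $\xi$ as a quotient $[Y/\GL_{N}]$ with $Y$ smooth over $k$. The object $\xi$ corresponds to a $\GL_{N}$-torsor $P\arr \spec L$ with $\GL_{N}$-equivariant map $P\arr Y$; trivializing $P$ (which is possible over $L$ up to a harmless cover) we get $y\in Y(L)$ with $\GL_{N}$-stabilizer $G$. Luna's \'etale slice theorem then produces a locally closed, smooth, $G$-stable slice $W\hookrightarrow Y$ through $y$ together with a $\GL_{N}$-equivariant \'etale morphism $\GL_{N}\times^{G}W\arr Y$. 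The induced representable \'etale morphism $\varphi\colon [W/G]\arr\cX$ exhibits $\xi$ as the image of $y\in W(L)$, viewed as a $G$-fixed $L$-point carrying the trivial $G$-torsor.

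\emph{Step 2 (Transfer and deformation).} Since $\varphi$ is \'etale, essential dimension is preserved, so $\ed_{k}\xi$ equals the essential dimension of $y$ in $[W/G]$; and $\varphi$ carries dominant points to dominant points, giving $\ged_{k}[W/G]\leq\ged_{k}\cX$. It thus suffices to show $\ed_{k}y\leq\ged_{k}[W/G]$ inside $[W/G]$. By amenability, the $G$-action on $W$ has finite generic stabilizer $H\subseteq G$, so rigidification of $[W/G]$ along $H$ produces a Deligne--Mumford stack $\widetilde{\cX}$ to which the DM Genericity Theorem applies, yielding a descent of the generic object to a field of transcendence degree $\leq\ged_{k}\widetilde{\cX}$. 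Using the reductivity of $G$ and the associated GIT quotient $W\arr W/\!/G$, one shows that this generic descent can be specialized to a descent of the closed orbit $G\cdot y$ (closed precisely because $y$ is $G$-fixed) without enlarging the field of definition, thereby producing the required descent of $\xi$.

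\emph{Main obstacle.} The crux of the argument is the orbit-specialization step: converting a descent of a generic $G$-orbit into one of the closed orbit $G\cdot y$ while holding transcendence degree fixed. Here reductivity of $G$ is indispensable beyond its role in Luna's theorem, entering through Hilbert--Mumford/GIT-type arguments and the good behavior of closed orbits under degeneration. This is precisely the ingredient that the DM version does not need, since finite stabilizers allow one to work directly with coarse moduli spaces. The ``in particular'' clause follows by taking the supremum over all $\xi$: it yields $\ed_{k}\cX\leq\ged_{k}\cX$, while the opposite inequality is tautological.
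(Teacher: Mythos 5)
Your proposal diverges substantially from the paper's actual proof, and Step 1 and Step 2 both contain gaps that are not easily repairable.

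\textbf{Luna's slice theorem does not apply as you use it.} First, amenability only asks for quasi-affine diagonal, so $\cX$ is not presented as a global quotient $[Y/\GL_N]$ with $Y$ a scheme; a Totaro-type argument needs more. Second, and more fundamentally, Luna's \'etale slice theorem requires $Y$ to be affine and the $\GL_N$-orbit of $y$ to be closed. Here $\xi$ is an arbitrary $L$-point of $\cX$; its orbit is not closed in general (it is merely a locally closed substack), so the classical slice theorem gives no \'etale presentation $[W/G] \arr \cX$ near $\xi$. The paper even flags this: in the course of proving Lemma~\ref{lemma3} they remark that the formal completion $R$ carries compatible $G$-actions on its truncations $R_n$ but \emph{not} an action of the algebraic group $G$ on $R$ itself (unless $G$ is finite) --- ``if it did, this would make the proof conceptually much simpler.'' That missing action is exactly what a Luna-type slice would supply, and the whole technical core of the paper (Lemma~\ref{lemma4}, the compatible $G$-torsors $P_n \arr \cX_n$ via Alper's result on local quotient structure, Artin approximation to compare inertia stacks) is a substitute for it. A genuinely \'etale local quotient presentation near a point with reductive stabilizer on a non-closed orbit is a much later theorem of Alper--Hall--Rydh, not available here and not what Luna gives.

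\textbf{Step 2 has independent gaps.} Rigidification of $[W/G]$ along the generic stabilizer $H$ is not defined near $y$: the generic stabilizer group scheme does not extend to a flat closed normal subgroup of the inertia over a neighborhood of $y$, since the stabilizer at $y$ is $G \supsetneq H$. The subsequent ``specialize the generic descent to a descent of the closed orbit without enlarging the field of definition'' is the entire content of the theorem and needs an argument; the parenthetical ``(closed precisely because $y$ is $G$-fixed)'' conflates being fixed in the slice $W$ with the $\GL_N$-orbit being closed in $Y$, and in any case $y$ is an $L$-point for arbitrary $L$, not a closed point. The paper's way of carrying out this ``specialization'' is the deformation to the normal cone $\cM \arr \AA^1_k$ of $\cY = \overline{\mathrm{im}\,\xi}$ inside $\cX$: one shows (Lemma~\ref{lemma3}) that the gerbe-like locus $\cM^0$ meets the central fiber $\cN$, lifts $\xi$ to a point of $\cN^0 = \cM^0 \cap \cN$, and then applies the genericity theorem for gerbe-like stacks (Theorem~\ref{thm:genericity-gerbes}) to $\cM^0$, which gives the sought bound without ever producing an explicit descent of $\xi$. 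Your proposal contains no analogue of this deformation step, and the GIT language (Hilbert--Mumford, good quotients) never actually appears in the paper's argument.

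In short: the conceptual intuition (``reductivity enables a slice'') is right, but the slice the paper uses is a formal one built from Alper's extension-of-torsors result and Artin approximation, packaged inside a deformation-to-the-normal-cone argument, and Luna's classical theorem is neither applicable nor a viable substitute.
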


Note that Theorem~\ref{thm:genericity} fails if the stabilizers are 
not required to be reductive (see~\cite[Example 6.5(b)]{brv-jems}),
even though a weaker statement may be true in this setting 
(see Conjecture~\ref{conjecture.non-reductive}).
We also remark that the locus of points with reductive 
stabilizer is constructible but not necessarily open in $\cX$. 
Thus for the purpose of proving Theorem~\ref{thm:genericity}
it does not suffice to consider the case where all 
stabilizers are reduced.


Theorem~\ref{thm:genericity} implies, in particular, that if the automorphism
group of a form $f(x_1, \dots, x_n)$ is reductive then  $\ed_{k} f \le
\ed_{k} F_{\rm gen}$. To complete the proof of Theorem~\ref{thm.main}(a) we
supplement this inequality with additional computations, carried on in
Section~\ref{sect.non-reductive}, which show that forms $f(x_1, \dots, x_n)$
whose automorphism group is not reductive have low essential dimension; for a
precise statement, see Theorem~\ref{thm:non-reductive}. The proof of
Theorem~\ref{thm.main}(b) is more delicate because the quotient stack
$[\PP(A_{n, d})/\GL_n]$ is not amenable, so the Genericity Theorem cannot be
applied to it directly. We get around this difficulty in
Section~\ref{sect.proof-of-main-thm} by relating $\ed_{k}[\PP(A_{n, d})/\GL_n]$
to the essential dimension of the amenable stack $[\PP(A_{n, d})/\PGL_n]$.

In the last section we use our Genericity Theorem~\ref{thm:genericity}
to prove a new result on the essential dimension of the stack 
of (not necessarily smooth) local complete intersection curves,
strengthening~\cite[Theorem 7.3]{brv-jems}.

\subsection*{Acknowledgments} We are grateful to J.~Alper and 
P.~Brosnan for helpful discussions.

\section{Preliminaries} \label{sect.prel}

\subsection{Special groups} \label{sect.special} A linear algebraic group scheme
$G$ over $k$ is said to be \emph{special} if for every extension $K/k$ we have
$\H^1(K, G) = \{ 1 \}$. Special groups were studied by Serre~\cite[Expos\'e
1]{chevalley2} and classified by Grothendieck~\cite[Expos\'e 5]{chevalley2}
(over an algebraically closed field of characteristic $0$). Note that $G$ is
special if and only if $\ed_{k}G = 0$; see~\cite[Proposition
4.3]{tossici-vistoli}.

The group $\GL_n$ is special by Hilbert's Theorem 90, and so is the special
linear group $\SL_n$. Direct products of special groups are easily seen to be
special. Moreover, in characteristic $0$ the group $G$ is special if and only if
the Levi subgroup of $G$ (which is isomorphic to $G/\ur G$) is special;
see~\cite[Theorem 1.13]{sansuc}. Here $\ur G$ denotes the unipotent radical of
$G$. We record the following fact for future reference.

Let $A$ be a non-zero nilpotent $n \times n$-matrix with entries in $k$ and
$G_A$ be the image of the map $\ga \to \GL_n$ given by $t \to \exp(tA)$. Note
that this map is algebraic, since only finitely many terms in the power series
expansion of $\exp(tA)$ are non-zero.

\begin{lemma} \label{lem.special} \hfil \begin{enumeratea}

\item The centralizer $C$ of $A$ (or equivalently, of $G_A$) in $\GL_n$ is
special.

\item The normalizer $N$ of $G_A$ in $\GL_n$ is special.

\end{enumeratea} \end{lemma}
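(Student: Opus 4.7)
The plan for part (a) is to identify the centralizer $C$ with the unit group $R^\times$ of the finite-dimensional associative $k$-algebra $R \eqdef \End_{k[A]}(k^n)$ and then to exhibit its Levi factor explicitly. Decomposing the $k[A]$-module $k^n$ into its Jordan decomposition gives
\[
k^n \;\cong\; \bigoplus_{\lambda \geq 1} \bigl(k[x]/(x^\lambda)\bigr)^{m_\lambda},
\]
and a direct block computation in this basis shows that the Jacobson radical $J$ of $R$ consists of those endomorphism components that either go between blocks of different sizes or act inside a single block through the maximal ideal $(x)$. Passing to the semisimple quotient yields $R/J \cong \prod_\lambda M_{m_\lambda}(k)$, so the Levi factor of the connected linear algebraic group $C = R^\times$ is $\prod_\lambda \GL_{m_\lambda}$, a product of special groups. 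The characteristic-zero criterion cited above (\cite[Theorem 1.13]{sansuc}) then forces $C$ itself to be special.

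For part (b) the first observation is that $C$ coincides with the centralizer of the one-parameter subgroup $G_A$: any $g$ commuting with $A$ also commutes with $\exp(tA)$, and conversely commutation with all of $G_A$ forces commutation with $A = \tfrac{d}{dt}\exp(tA)\rvert_{t=0}$. Since $G_A \cong \ga$, whose automorphism group scheme is $\gm$ (acting by reparametrization), conjugation induces a homomorphism $\phi \colon N \to \gm$ with kernel $C$. The plan is to show $\phi$ is surjective and then conclude via cohomology. Surjectivity is verified by putting $A$ in Jordan normal form and taking, for each $c \in \gm$, the block-diagonal matrix built from $\diag(c^{\lambda-1}, c^{\lambda-2}, \dots, c, 1)$ on each $\lambda \times \lambda$ Jordan block; a direct check shows that it conjugates $A$ to $cA$, so it lies in $N$ and maps to $c$ under $\phi$.

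The resulting short exact sequence
\[
1 \longrightarrow C \longrightarrow N \longrightarrow \gm \longrightarrow 1
\]
produces, for any field extension $K/k$, an exact sequence $\H^1(K, C) \to \H^1(K, N) \to \H^1(K, \gm)$. The outer terms vanish by part (a) and by Hilbert 90, so $\H^1(K, N) = 1$ and $N$ is special. The only non-formal step in this plan is the structural computation of $R/J$ in part (a); once that identification is in place, the Levi extraction, the reduction via Sansuc's criterion, the construction of $\phi$ together with the diagonal splitting, and the final cohomology argument are all routine.
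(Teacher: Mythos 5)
Your proof is correct and follows essentially the same route as the paper. For part (a), you carry out the centralizer-algebra computation (identifying $R/J \cong \prod_\lambda M_{m_\lambda}(k)$ and hence the Levi factor $\prod_\lambda \GL_{m_\lambda}$) where the paper instead cites Jantzen's Propositions 3.10 and 3.8.1; both then invoke Sansuc's criterion. For part (b), the paper treats the cases $\pi$ trivial and $\pi$ nontrivial separately, whereas your explicit block-diagonal matrix $\diag(c^{\lambda-1},\dots,c,1)$ shows the map to $\gm$ is always surjective for $A \ne 0$ -- a small tidying, not a different argument -- and the closing cohomological step is identical.
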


\begin{proof} (a) By \cite[Propositions 3.10 and 3.8.1]{jantzen} $C$
is a semidirect product $U \rtimes H$, where $U \triangleleft C$ is 
unipotent and $H$ is the direct product of general linear groups
$\GL_r$ for various $r \ge 0$; cf., also~\cite[Section 2]{mcninch}. Thus $H$ =
Levi subgroup of $C$ is special, and part (a) follows.

\smallskip (b) The normalizer $N$ acts on $G_A \simeq \ga$
by conjugation. This gives rise to a homomorphism $\pi \colon N \to \gm =
\underaut_{k} \ga$ whose kernel is the centralizer $C = C_{\GL_n}(A)$. If
$\pi$ is trivial then $N = C$ is special by part (a). If $\pi$ is non-trivial
then it is surjective, and we have an exact sequence \[ 1 \arr C \arr N \arr \gm
\arr 1 \, . \] The long non-abelian cohomology sequence for $\H^{0}$ and
$\H^{1}$ associated with this short exact sequence shows that $\H^1(K, N) = \{ 1
\}$ for every field $K/k$, as desired. \end{proof}

\subsection{Canonical dimension} \label{sect.cd} Let $K$ be a field and $X$ be
either a geometrically integral smooth complete $K$-scheme of finite type or a
$G$-torsor for some connected linear algebraic $K$-group $G$. The canonical
dimension $\cd X$ of $X$ is the minimal value of $\dim Y$, where $Y$ ranges over
all integral closed $K$-subschemes of $X$ admitting a rational 
map $X \dasharrow Y$ defined over $K$. Equivalent definitions 
via generic splitting 
fields and determination functions can be found in~\cite{ber, km1}.

If we fix a base field $k$ and an algebraic $k$-group $G$, the maximal value of
$\cd X$ as $K$ ranges over all field extensions $K/k$ and $X \to \Spec K$ ranges
over all $G_K$-torsors, is denoted by $\cd G$. Moreover, $\cd G = \cd X_{\rm
ver}$, where $X_{\rm ver} \to \Spec K_{\rm ver}$ is a versal $G$-torsor. In
particular, we can construct a versal $G$-torsor by starting with a generically
free linear representation $V$ of $G$ defined over $k$ and setting $K_{\rm ver}
\eqdef k(V)^G$. Then $V$ has a $G$-invariant open subset $U$ which is the total
space of a $G$-torsor $U \to B$, where $k(B) = k(V)^G$. Restricting to the
generic point $\eta$ of $B$, we obtain a versal torsor $X_{\rm ver} \eqdef
U_{\eta} \to \Spec K_{\rm ver}$. For details of this construction we refer the
reader to~\cite[I.5]{gms}.

\begin{lemma} \label{lem.brauer-severi.cd} \hfil \begin{enumeratea}

\item Let $X_1$ and $X_2$ be Brauer-equivalent Brauer--Severi varieties over a
field $K/k$. Then \[ \cd X_1 = \cd X_2 \, . \] In other words, the canonical
dimension $\cd \alpha$ of a Brauer class $\alpha \in \H^{2}(K, \gm)$ is well
defined.

\item Let $G = \GL_n$ or $\SL_n$ and let $C$ be a central subgroup scheme of
$G$. Then for any field $K/k$ and any $(G/C)$-torsor $X \to \Spec K$ we have
$\cd X = \cd \alpha$, where $\alpha$ is the image of the class of $X$ under the
coboundary map $\partial_K \colon \H^1(K, G/C) \to \H^2(K, C) \subseteq
\H^{2}(K, \gm)$ induced by the exact sequence $1 \to C \to G \to G/C \to 1$.

\item Let $K/k$ be a field extension and $\alpha \in \H^2(K, \gm)$ be a Brauer
class of index dividing $n$ and exponent dividing $d$. Then $\cd\alpha \le
\cd(\GL_n/\mu_d)$.

\end{enumeratea} \end{lemma}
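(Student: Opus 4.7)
\textbf{Plan for the proof of Lemma~\ref{lem.brauer-severi.cd}.}

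The three parts are proved in sequence, each relying on the previous. The common technical tool is the invariance principle of Karpenko--Merkurjev~\cite{km1}: the canonical dimension of a geometrically integral smooth complete variety or of a $G$-torsor over a field depends only on the class of its generic splitting fields, i.e., on the collection of extensions over which it acquires a rational point.

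For part~(a), any two Brauer-equivalent Brauer-Severi varieties $X_1$ and $X_2$ over $K$ share the same splitting fields, since $X_i(L)\ne\emptyset$ if and only if $\alpha|_L=0$ for every extension $L/K$; the invariance principle then gives $\cd X_1=\cd X_2$. (Alternatively, $X_1$ acquires a point over $K(X_2)$ and vice versa, producing rational maps in both directions from which the equality can be deduced.)

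For part~(b), let $X$ be a $(G/C)$-torsor with class $\beta\in H^1(K,G/C)$ and set $\alpha=\partial_K(\beta)$. By Hilbert's Theorem~90, $H^1(L,G)=1$ for every extension $L/K$, so the non-abelian cohomology sequence attached to $1\to C\to G\to G/C\to 1$ yields an injection $H^1(L,G/C)\hookrightarrow H^2(L,C)$. Composing with the injection $H^2(L,C)\hookrightarrow H^2(L,\gm)=\Br(L)$ (arising from $1\to C\to\gm\to\gm\to 1$ together with $H^1(L,\gm)=0$), we see that $X|_L$ is trivial as a torsor if and only if $\alpha|_L=0$ in $\Br(L)$. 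Thus $X$ and any Brauer-Severi variety of class $\alpha$ share splitting fields, and the invariance principle yields $\cd X=\cd\alpha$.

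For part~(c), given $\alpha\in\Br(K)$ with $\ind(\alpha)\mid n$ and $\exp(\alpha)\mid d$, I would exhibit a $(\GL_n/\mu_d)$-torsor $X_\alpha$ over $K$ with $\partial_K(X_\alpha)=\alpha$. Choose a central simple $K$-algebra $A$ of degree $n$ representing $\alpha$; this gives a $\PGL_n$-torsor with image $\alpha$ in $\Br(K)$. A diagram chase using the sequences $1\to\mu_d\to\GL_n\to\GL_n/\mu_d\to 1$ and $1\to\gm\to\GL_n\to\PGL_n\to 1$, together with the identification $\gm/\mu_d\simeq\gm$ via the $d$-th power map, identifies the obstruction to lifting this $\PGL_n$-torsor to a $(\GL_n/\mu_d)$-torsor with $d\alpha\in\Br(K)$, which vanishes by the exponent hypothesis. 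Part~(b) then gives $\cd X_\alpha=\cd\alpha$, and $\cd X_\alpha\le\cd(\GL_n/\mu_d)$ by definition, yielding the desired bound. The central hurdle is part~(b): transferring canonical dimension from a Brauer-Severi variety to an a~priori unrelated torsor of $G/C$, via the splitting-field invariance principle applied in sufficient generality to cover both smooth complete varieties and torsors uniformly.
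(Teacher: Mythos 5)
Your proposal is correct and follows essentially the same route as the paper: part~(a) by sameness of splitting fields, part~(b) by injectivity of the coboundary map (Hilbert~90) together with the splitting-field invariance of canonical dimension, and part~(c) by producing a $(\GL_n/\mu_d)$-torsor with coboundary~$\alpha$ and applying part~(b). The paper merely cites prior lemmas (Lemmas~10.2 and~2.6 of~\cite{ber}) at the points where you spell out the injectivity of $\H^1(L,G/C)\hookrightarrow\H^2(L,C)\hookrightarrow\H^2(L,\gm)$ and the lifting-obstruction diagram chase.
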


\begin{proof} (a) follows from the fact that $X_1$ and $X_2$ have the same
splitting fields $L/K$; see~\cite[Section 10]{ber} or~\cite[Section 2]{km1}.

\smallskip (b) By Hilbert's Theorem 90, $G$ is special, i.e., $\H^1(L, G) = \{ 1
\}$ for any field $L$. Hence, the coboundary map $\partial_L \colon \H^1(L, G/C)
\to \H^2(K, C)$ has trivial kernel for any $L$, and the desired conclusion
follows from~\cite[Lemma 10.2]{ber}.

\smallskip (c) By our assumption, $\alpha$ lies in the image of the coboundary
map \[ \partial_K \colon \H^1(K, \GL_n/\mu_d) \arr \H^2(K, C) \, ; \] cf.,
e.g.,~\cite[Lemma 2.6]{ber}. Part (c) now follows from part (b). \end{proof}

The following result will be used repeatedly in the sequel.

\begin{proposition}  \label{prop.ed-gerbe}\hfil \begin{enumeratea}

\item Let $\cX \to \Spec K$ be a $\gm$-gerbe over a field $K$. Denote the class
of this gerbe in $\H^2(K, \gm)$ by $\alpha$. Then $\ed_K \cX = \cd \alpha$.

\item Let $e \ge 1$ be an integer and $\cX \to \Spec K$ be a $\mu_e$-gerbe over
a field $K$. Denote the class of this gerbe in $\H^2(K, \mu_e)$ by $\beta$. Then
$\ed_K \cX = \cd \beta + 1$.

\end{enumeratea} \end{proposition}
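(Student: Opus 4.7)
My plan is to prove part (a) directly using Hilbert's Theorem 90 and then reduce part (b) to part (a) via the Kummer exact sequence $1 \to \mu_e \to \gm \xrightarrow{e} \gm \to 1$.

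For part (a), the key observation is that since $\cX$ is a $\gm$-gerbe, for any field extension $L/K$ the groupoid $\cX(\Spec L)$ is empty when $\alpha|_L \neq 0$ and has a single isomorphism class of objects when $\alpha|_L = 0$ (because $\H^1(L,\gm)=0$). Consequently, an object $\xi \in \cX(\Spec L)$ descends to an intermediate field $K \subseteq L_0 \subseteq L$ if and only if $\alpha|_{L_0}=0$, so $\ed_K \xi$ equals the minimum of $\trdeg_K L_0$ over such subfields. Taking the supremum over all $L$ and $\xi$ then gives $\ed_K \cX = \cd \alpha$ by the characterization of the canonical dimension of a Brauer class via generic splitting fields (see~\cite[Section~10]{ber} and~\cite{km1}).

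For part (b), I would push out $\cX$ along the inclusion $\mu_e \hookrightarrow \gm$ to obtain a $\gm$-gerbe $\cY$ whose class $\alpha \in \H^2(K,\gm)$ is the image of $\beta$. Since $\pic L = 0$ for every field $L/K$, the map $\H^2(L,\mu_e) \to \H^2(L,\gm)$ is injective, so $\cX$ and $\cY$ have the same splitting fields; by part~(a), $\ed_K \cY = \cd \alpha$, which we interpret as $\cd \beta$. Over any splitting field $L$, the Kummer isomorphism $\H^1(L,\mu_e) \cong L^*/(L^*)^e$ parameterizes isomorphism classes in $\cX(\Spec L)$ once a basepoint is chosen: a general object $\xi$ corresponds to a class $[a] \in L^*/(L^*)^e$, and $\xi$ descends to $L_0 \subseteq L$ if and only if $\alpha|_{L_0}=0$ and $[a]$ lies in the image of $L_0^*/(L_0^*)^e$.

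The upper bound $\ed_K \xi \le \cd \alpha + 1$ follows by enlarging a minimal splitting subfield (of transcendence degree $\cd \alpha$) by a single representative of $a$. For the matching lower bound, I would construct a versal object: take $L_1$ to be the function field of a generic splitting variety of $\alpha$, put $L = L_1(t)$ with $t$ transcendental over $L_1$, and let $\xi$ correspond to $[t] \in L^*/(L^*)^e$. For any descent $L_0 \subseteq L$, the identity $t = \lambda u^e$ with $\lambda \in L_0^*$ and $u \in L^*$, combined with the $t$-adic valuation $v_t$ on $L_1(t)$ (trivial on $L_1$, $v_t(t)=1$), forces $L_0$ to contain an element $\lambda$ with $v_t(\lambda) \not\equiv 0 \pmod e$, hence an element transcendental over $L_1$. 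I expect the main subtlety to lie in combining this with the splitting condition $\alpha|_{L_0}=0$ to deduce $\trdeg_K L_0 \ge \cd \alpha + 1$; the cleanest route is probably to choose $L_1$ so that any splitting subfield of $L_1$ has transcendence degree at least $\cd \alpha$ (which holds for a generic splitting variety) and then to leverage that the new transcendental in $L_0$ over $L_1$ increases the transcendence degree over $K$ by one.
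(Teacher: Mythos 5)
The paper itself gives no argument here; it simply cites~\cite[Theorem~4.1]{brv-jems}. Your proposal, a direct proof, is therefore a genuinely different route, and its overall strategy is sound: part~(a) via Hilbert's Theorem~90 and the splitting-field characterization of canonical dimension is correct, the reduction of $\cd\beta$ to $\cd\alpha$ via the injectivity of $\H^2(L,\mu_e)\to\H^2(L,\gm)$ is correct, and the upper bound in~(b) (split $\alpha$ over some $L_1\subseteq L$ with $\trdeg_K L_1\le\cd\alpha$, then adjoin one Kummer representative) is fine, since there the basepoint is chosen over $L_1\subseteq L_0$.

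The lower bound in~(b), however, has a real gap in the step ``for any descent $L_0\subseteq L$ we get $t=\lambda u^e$ with $\lambda\in L_0^*$.'' That identity says $[t]$ lies in the image of $L_0^*/(L_0^*)^e$, which is the descent criterion \emph{relative to a basepoint that itself descends to $L_0$}. Your basepoint $\xi_1$ is defined over $L_1=K(X)$, and an arbitrary splitting subfield $L_0\subseteq L$ need not contain (or receive a map from) $L_1$, so there is no reason for $\xi_1$ to descend to $L_0$ and hence no reason to obtain $t=\lambda u^e$ rather than $t=c\,\lambda u^e$ with an uncontrolled $c$. The correct version of the argument bypasses this: $(\xi_1)_L$ extends to an object of $\cX(\cO_v)$ for $v=v_t$ (because $L_1\subseteq\cO_v$), and the isomorphism classes coming from $\cX(\cO_v)$ form exactly a coset of $\ker(\partial_v\colon L^*/(L^*)^e\to\ZZ/e)$; since $\partial_v[t]=1\neq 0$ for $e\ge 2$, $\xi$ does \emph{not} extend to $\cX(\cO_v)$, so $\xi$ cannot come from any $L_0\subseteq\cO_v$, i.e., $v_t|_{L_0}$ is nontrivial. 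Relatedly, your concluding ``leverage that the new transcendental in $L_0$ over $L_1$ increases the transcendence degree by one'' is too vague to close the argument: $L_0$ is not known to contain $L_1$, so having one element transcendental over $L_1$ does not by itself give $\trdeg_K L_0\ge\cd\alpha+1$. The clean finish is to pass to the residue field $\ell_0$ of $L_0$ at $w=v_t|_{L_0}$: injectivity of $\H^2(\cO_w,\mu_e)\to\H^2(L_0,\mu_e)$ gives $\alpha|_{\ell_0}=0$, and $\ell_0$ embeds in the residue field $L_1$ of $v$, so $\trdeg_K\ell_0\ge\cd\alpha$ by incompressibility of $L_1$; then the Abhyankar inequality gives $\trdeg_K L_0\ge\trdeg_K\ell_0+1\ge\cd\alpha+1$. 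With these two repairs your direct proof goes through; as written, the blanket claim ``$\xi$ descends to $L_0$ iff $\alpha|_{L_0}=0$ and $[a]\in\im\bigl(L_0^*/(L_0^*)^e\bigr)$'' is false in general and is the source of both gaps.
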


\begin{proof} See~\cite[Theorem 4.1]{brv-jems}. \end{proof}

\subsection{Gerbes and Brauer classes}

Let $\phi \colon \cX \to \overline{\cX}$ be a $\gm$-gerbe over a stack
$\overline{\cX}$. If $L$ is a field and $\xi \in \overline{\cX}(L)$ then,
pulling back $\cX$ to $\Spec L$ we obtain a $\gm$-gerbe $\cX_{\xi}$ over $L$. We
will denote by $\ind(\cX_{\xi})$ and $\exp(\cX_{\xi})$ the index and exponent of
the Brauer class of $\cX_{\xi}$. The following lower semi-continuity properties
of $\ind$ and $\exp$ (as functions of $\xi$) will be used in the proof of
Theorem~\ref{thm.main}(b).

\begin{lemma} \label{lem.index} Let $\phi \colon \cX \to \overline{\cX}$ be a
$\gm$-gerbe over an integral regular algebraic stack $\overline{\cX}$, as above.
Assume further that $\overline{\cX}$ is generically a scheme, with generic point
$\eta \colon \Spec K \to \overline{\cX}$.  Then for any field $L/k$ and any $\xi
\in \overline{\cX}(L)$,

\begin{enumeratea}

\item $\ind(\cX_{\xi})$ divides $\ind(\cX_{\eta})$, and

\item $\exp(\cX_{\xi})$ divides $\exp(\cX_{\eta})$.

\end{enumeratea} \end{lemma}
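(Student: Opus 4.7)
The plan is to reduce both parts to standard semi-continuity statements for Brauer classes on regular integral schemes, which follow from the Auslander--Goldman injectivity together with a spreading-out argument for splitting fields.

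First I would reduce to the case where $\overline{\cX}$ is a regular integral scheme. Choose a smooth surjective morphism $\pi \colon P \to \overline{\cX}$ with $P$ an integral regular scheme (a connected component of any smooth atlas will do); its generic point maps to $\eta$, and $K(P)/K$ is a regular field extension. For the $L$-point $\xi$, the fiber product $\Spec L \times_{\overline{\cX}, \xi} P$ is a smooth nonempty $L$-scheme, and taking $L'$ to be the function field of one of its components yields an $L'$-point $\xi'$ of $P$ lifting $\xi_{L'}$. Since $L'/L$ is a regular extension (function field of a smooth geometrically integral $L$-scheme), both index and exponent of $[\cX_\xi]$ are preserved under base change from $L$ to $L'$, and similarly at the generic point the index and exponent of $[\pi^{*}\cX]$ agree with those of $[\cX_\eta]$. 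Thus we may assume $\overline{\cX}$ is a regular integral scheme.

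Part (b) then follows from the Auslander--Goldman theorem (extended to cohomological Brauer groups via Gabber): the map $\H^2(\overline{\cX}, \gm) \to \Br(K)$ is injective, so writing $e \eqdef \exp(\cX_\eta)$ we obtain $e \cdot [\cX] = 0$ in $\H^2(\overline{\cX}, \gm)$, and pulling back along $\xi$ gives $e \cdot [\cX_\xi] = 0$. For part (a), set $n \eqdef \ind(\cX_\eta)$, choose a finite separable extension $K'/K$ of degree $n$ that splits $\cX_\eta$, and let $Y \to \overline{\cX}$ be the normalization of $\overline{\cX}$ in $K'$, a finite morphism whose generic fiber has degree $n$. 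The pullback of $[\cX]$ to $Y$ is generically trivial, and by Auslander--Goldman extended to normal schemes (or via an alteration to a regular scheme), it vanishes on $Y$. Picking $y \in Y$ over the image of $\xi$, the residue field $\kappa(y)$ is a finite extension of $\kappa(\xi)$ of degree dividing $n$ that splits $\cX_\xi$, so $\ind(\cX_\xi)$ divides $n$. The hard part will be handling the possible failure of regularity of $Y$ in this last step: one must either invoke a version of Auslander--Goldman valid for normal schemes, or else perform an alteration via de Jong's theorem while tracking that the relevant field degrees remain bounded by $n$.
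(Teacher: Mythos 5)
Your part~(b) is correct in substance and takes a genuinely different route from the paper. After reducing to an affine regular integral scheme $T$, you invoke the injectivity of $\H^2(T,\gm) \to \H^2(K,\gm)$ (Grothendieck's purity for regular noetherian integral schemes): since $e\cdot[\cX_T]$ dies at the generic point, it dies in $\H^2(T,\gm)$, hence also after pulling back to $L$. The paper instead deduces (b) by applying (a) to the $e$-th power of the gerbe (or, alternatively, via Hilbert schemes of hypersurfaces in a Brauer--Severi scheme). Your argument is a clean alternative that does not rely on (a) at all.

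A small caveat on the reduction to schemes: a connected component of the smooth $L$-scheme $\Spec L \times_{\overline{\cX},\xi} P$ need not be \emph{geometrically} connected over $L$, so its function field $L'$ can be a nontrivial finite extension of $L$, in which case $L'/L$ is not regular and the index of $\cX_\xi$ can genuinely drop after base change. The paper avoids this by citing \cite[Theorem 6.3]{LMB}, which produces a smooth scheme atlas through which $\xi$ already factors over $L$ itself; this is the cleaner way to set up the reduction.

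The real problem is part~(a), where the gap you flag is fatal and neither of the two repairs you propose closes it. The map $\Br(Y)\to\Br(K(Y))$ fails to be injective for a normal scheme $Y$ in general (quotient singularities already give counterexamples), and a de Jong alteration $Y'\to Y$ has uncontrolled generic degree, which destroys the bound by $n$. There are also further structural issues with the normalization approach: $Y\to\overline{\cX}$ is finite but not flat, so the fiber over $\xi$ has no reason to have length $n$; and "picking $y\in Y$ over the image of $\xi$" does not yield a finite extension of $L$ when $L$ is transcendental over the residue field of that image point. The paper's proof sidesteps all of this. After reducing to an affine regular integral $T$, Gabber's theorem \cite{gabber-brauer} represents the gerbe by an actual Brauer--Severi scheme $P\to T$; then $\ind(\cX_\eta)\mid d$ is equivalent to the Grassmannian bundle $\Gr(P,d-1)\to T$ admitting a $K$-rational section at the generic point, and one propagates that section to the point $\xi$ by extending $\Spec L \arr T$ to a map from a DVR and invoking the valuative criterion of properness for $\Gr(P,d-1)\to T$. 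This properness input is exactly what is missing from a splitting-field/normalization argument, and there is no substitute for it in your sketch.
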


\begin{proof} (a) The key fact we will use is that if $B$ is a Brauer--Severi
variety over a field $L$ then $\ind(B)$ divides $d$ if and only if $B$ has a
linear subspace of dimension $d-1$ defined over $L$; see~\cite[Proposition
3.4]{artin-bs}.

By \cite[Theorem 6.3]{LMB} there exists a smooth map $T \to X$ such that $\xi$
lifts to a point $\Spec L \to T$; we may assume that $T$ is affine and integral.
The index of the pullback of $\cX_{\eta}$ to the function field $k(T)$ divides
$\ind(\cX_{\eta})$; hence we can substitute $\overline{\cX}$ with $T$, and
assume that $\overline{\cX} = T$ is an affine regular integral variety. The
étale cohomology group $\H^{2}(T, \gm)$ is torsion, because $T$ is regular;
hence, by a well known result of O.~Gabber~\cite{gabber-brauer} 
the class of $\cX$
is represented by a Brauer--Severi scheme $P \to T$.

Let $d$ be the index $\ind(\overline{\cX}_{\eta}) \eqdef \ind(P_{\eta})$ and
$\Gr(P, d-1) \to T$ be the Grassmannian bundle of linear subspaces of dimension
$d-1$ in $P$. The generic fiber $\Gr(P, n-1)_{\eta}$ has a $K$-rational point;
this gives rise to a section $U \to \Gr(P, n-1)$ over some open substack $U$ of
$T$. Let $Y$ be the complement of $U$ in $T$.  If our point $\xi \colon \Spec L
\to \overline{\cX}$ lands in $U$, then the pullback $P_{\xi}$ has a linear
subspace of dimension $d-1$ defined over $L$, and we are done. Thus we may
assume that $\xi \in Y(L)$. The morphism $\xi\colon \Spec L \to T$ extends to a
morphism $\Spec R \to T$, where $R$ is a DVR with residue field $L$, such that
the generic point of $\Spec R$ lands in the complement of $Y$ in $T$. The
pullback $\Gr(P, d-1)_R$ of $\Gr(P, d-1)$ to $\Spec R$ then has a section over
the generic point. By the valuative criterion of properness this section extends
to a section $\Spec R \to \Gr(P, d-1)$. Specializing to the closed
point of $\Spec R$, we obtain a desired section $\Spec L \to \Gr(P, d-1)$. This
shows that $P_{\xi}$ has degree dividing $d$, as claimed.

\smallskip (b) Set $e \eqdef \exp(\cX_{\xi})$ and apply part (a) to the
$e\th$ power $\cY$ of the gerbe $\cX$. Since $\cY_{\eta}$ is trivial (i.e.,
has index $1$), so is $\cY_{\xi}$. 
But $\cY_{\xi}$ is the $e\th$ power of the class of $\cX_{\xi}$,
and we are done.

An alternative proof of part (b) is based on the fact 
that a Brauer-Severi variety $B \to \Spec L$ over a
field $L$ has index dividing $e$ if and only if $P$ contains a hypersurface of
degree $e$ defined over $L$; see~\cite[(5.2)]{artin-bs}. We may thus
proceed exactly as in the proof of part~(a), with the same $T$ 
and $P \arr T$, but using the Hilbert scheme $H(P, e) \to T$ of 
hypersurfaces of degree $e$ in $P$ instead of the Grassmannian. 
\end{proof}

\section{Amenable stacks and generic essential dimension}
\label{sect.generic-ed}

\begin{definition} Let $\cX$ be an algebraic stack over $k$. We say that $\cX$
is \emph{amenable} if the following conditions hold.

\begin{enumeratea}

\item $\cX$ is integral with quasi-affine diagonal.

\item $\cX$ is locally of finite type and smooth over $k$.

\item There exists a non-empty open substack of $\cX$ that is a \dm stack.

\end{enumeratea} \end{definition}

Any irreducible algebraic stack has a \emph{generic gerbe}, the residual gerbe
at any dominant point $\spec K \arr \cX$ \cite[\S~11]{LMB}. For
amenable stacks, there is an alternate description. Let $\cX$ be an amenable
stack over $k$, and $\cU$ a non-empty open substack which is \dm. After
shrinking $\cU$, we may assume that the inertia stack $\cI_{\cU}$ is finite over
$\cU$. Let $\bU$ be the moduli space of $\cU$, whose existence is proved
in~\cite{keel-mori}, and let $k(\bX)$ be 
its residue field. The generic gerbe $\cX_{k(\bX)} \arr \spec k(\bX)$ 
is then the fiber product $\spec k(\bX) \times_{\bU} \cU$. 
The dimension $\dim \cX$ is the dimension of $\cU$, or,
equivalently, the dimension of $\bU$.

\begin{example} \label{ex.amenable2} Consider the action of a linear algebraic
group defined over $k$ on a smooth integral $k$-scheme $X$, locally of finite
type. Then the quotient stack $[X/G]$ is amenable if and only if the stabilizer
$\Stab_G(x)$ of a general point $x \in X$ is finite.

Of particular interest to us will be the $\GL_n$-actions on $A_{n, d}$, the
$\binom{n + d - 1}{d}$\dash dimensional affine space of forms of degree $d$ in
$n$ variables, and $\PP(A_{n, d})$ = $\binom{n + d - 1}{d}-1$ dimensional
projective space of degree $d$ hypersurfaces in $\PP^{n-1}$, as well as the
$\PGL_n$-action on $\PP(A_{n, d})$.

Since the center of $\GL_n$ acts trivially on $\PP(A_{n, d})$, the stack
$[\PP(A_{n, d})/\GL_n]$ is not amenable.  On the other hand, it is classically
known that the stabilizer of any smooth hypersurface in $\PP^{n-1}$ of degree $d
\ge 3$ is finite; see, e.g.,~\cite[Theorem~2.1]{orlik-solomon}
or~\cite{matsumura-monsky}. From this we deduce that the stacks $[\PP(A_{n,
d})/\PGL_n]$ and $[A_{n, d}/\GL_n]$ are both amenable for any $n \ge 2$ and $d
\ge 3$.

Moreover, if $n \ge 2$, $d \ge 3$ and $(n, d) \neq (2, 3)$, $(2, 4)$ or $(3, 3)$
then the stabilizer of a general hypersurface in $\PP^{n-1}$ of degree $d$ is
trivial; see~\cite{matsumura-monsky}. For these values of $n$ and $d$ the 
quotient stack $[\PP(A_{n, d})/\PGL_n]$ is generically a scheme of dimension \[
\dim \PP(A_{n, d}) - \dim\PGL_n = \binom{n + d - 1}{d} - n^2 \, . \]
\end{example}

\begin{definition} \label{def.generic-ed} The \emph{generic essential dimension}
of an amenable stack $\cX$ is \[ \ged_{k} \cX \eqdef \ed_{k(\bX)}\cX_{k(\bX)} +
\dim \cX\,. \] \end{definition}

Alternatively,  $\ged_{k} \cX$ is the supremum of the essential 
dimension of $\zeta \in \cX(K)$, taken over all field extensions
$K/k$ and all dominant $\zeta \colon \spec K \arr \cX$. By the Genericity 
Theorem for \dm stacks~\cite[Theorem~6.1]{brv-jems}, we
see that $\ged_{k}\cX$ is the essential dimension 
of any open substack of $\cX$ that is a \dm stack.

We will now compute the generic essential dimension of the quotient stacks
$[A_{n, d}/\GL_n]$ and $[\PP(A_{n, d})/\GL_n]$ for $n$ and $d$ as in the
statement of Theorem~\ref{thm.main}. Recall that $\ged_{k}[A_{n, d}/\GL_n] =
\ed_{k}F_{\rm gen}$ and $\ged_{k}[\PP(A_{n, d})/\GL_n] = \ed_{k}H_{\rm gen}$,
where $F_{\rm gen}$ is the generic forms of degree $d$ in $n$ variables and
$H_{\rm gen}$ is the generic hypersurfaces, as in~\eqref{e.generic-form}.

\begin{proposition} \label{prop.generic-ed} Let $n \ge 2$ and $d \ge 3$ be
integers.  Assume further that $(n, d) \neq (2, 3)$, $(2, 4)$ or $(3, 3)$.  Then

\begin{enumeratea}

\item $\ged_{k}[\PP(A_{n, d})/\GL_n] = \binom{n + d - 1}{d} - n^2 +
\cd(\GL_n/\mu_d)$.

\item $\ged_{k}[A_{n, d}/\GL_n] = \binom{n + d - 1}{d} - n^2 +
\cd(\GL_n/\mu_d) + 1$.

\end{enumeratea} \end{proposition}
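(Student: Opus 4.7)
I treat parts~(a) and~(b) together via the common amenable base $\overline{\cX} := [\PP(A_{n,d})/\PGL_n]$, which by Example~\ref{ex.amenable2} is generically a scheme of dimension $N := \binom{n+d-1}{d}-n^2$. Write $\overline{\bU}$ for the coarse moduli space of its open \dm substack and set $K := k(\overline{\bU})$, so $\trdeg_k K = N$.

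For part~(b), the stack $\cX_b := [A_{n,d}/\GL_n]$ is amenable (Example~\ref{ex.amenable2}) with $\dim\cX_b = N$, and sits as a $\mu_d$-gerbe over $[A_{n,d}/(\GL_n/\mu_d)]$, whose coarse moduli has function field $k(A_{n,d})^{\GL_n} = k(\PP(A_{n,d}))^{\PGL_n} = K$. Let $\beta\in\H^2(K,\mu_d)$ be the class of the generic gerbe $\cX_{b,K}$; by Proposition~\ref{prop.ed-gerbe}(b), $\ed_K \cX_{b,K} = \cd\beta + 1$. Under our hypothesis on $(n,d)$, the action of $\GL_n/\mu_d$ on $A_{n,d}$ is generically free, so the associated generic $(\GL_n/\mu_d)$-torsor $T$ over $K$ is versal. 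Since $\beta$ is the coboundary of $[T]$ under $\delta_1\colon\H^1(K,\GL_n/\mu_d)\to\H^2(K,\mu_d)$ coming from $1\to\mu_d\to\GL_n\to\GL_n/\mu_d\to 1$, Lemma~\ref{lem.brauer-severi.cd}(b) gives $\cd\beta = \cd T = \cd(\GL_n/\mu_d)$. Combining with Definition~\ref{def.generic-ed} yields~(b).

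For part~(a), the stack $\cX_a := [\PP(A_{n,d})/\GL_n]$ is not amenable (the scalar subgroup of $\GL_n$ acts trivially on $\PP(A_{n,d})$), but it is a $\gm$-gerbe over $\overline{\cX}$. Let $\alpha\in\H^2(K,\gm)$ be its generic gerbe class, so that $\ed_K \cX_{a,K} = \cd\alpha$ by Proposition~\ref{prop.ed-gerbe}(a). The generic $\PGL_n$-torsor $P$ coming from $\PP(A_{n,d})\to\overline{\cX}$ satisfies $\alpha = \delta_2[P]$, where $\delta_2\colon\H^1(K,\PGL_n)\to\H^2(K,\gm)$ is the coboundary attached to $1\to\gm\to\GL_n\to\PGL_n\to 1$. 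The morphism of short exact sequences arising from $\mu_d\hookrightarrow\gm$ and $\GL_n/\mu_d\to\PGL_n$ gives $\delta_2[P] = \iota_*\delta_1[T]$ whenever a $(\GL_n/\mu_d)$-torsor $T$ maps to $P$, where $\iota\colon\mu_d\hookrightarrow\gm$. But the $(\GL_n/\mu_d)$-equivariant morphism $A_{n,d}^{\ast}\to\PP(A_{n,d})$ exhibits the versal torsor $T$ of part~(b) as such a lift of $P$. Hence $\alpha = \iota_*\beta$ as Brauer classes, so $\cd\alpha = \cd\beta = \cd(\GL_n/\mu_d)$.

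Finally, to pass from $\ed_K\cX_{a,K}$ to $\ged_k\cX_a$---noting that Definition~\ref{def.generic-ed} does not apply directly since $\cX_a$ is not amenable---I use the alternative characterization of $\ged_k\cX_a$ as the supremum of $\ed_k\zeta$ over dominant $\zeta$: any such $\zeta\in\cX_a(L)$ has image $\bar\zeta\in\overline{\cX}(L)$ with $\ed_k\bar\zeta\leq N$ by the Genericity Theorem applied to the amenable $\overline{\cX}$, and the extra cost of lifting back along the $\gm$-gerbe is $\cd\alpha$. The main obstacle is the structural identification $\alpha = \iota_*\beta$, which hinges on exhibiting $T$ simultaneously as the versal $(\GL_n/\mu_d)$-torsor of part~(b) and as a $(\GL_n/\mu_d)$-lift of the generic $\PGL_n$-torsor of part~(a); everything else assembles from Propositions~\ref{prop.ed-gerbe} and~\ref{lem.brauer-severi.cd} together with versality.
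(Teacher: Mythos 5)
Your proof takes essentially the same route as the paper: identify $[\PP(A_{n,d})/\PGL_n]$ as the generically-a-scheme base of dimension $N=\binom{n+d-1}{d}-n^2$, recognise $[A_{n,d}/\GL_n]$ and $[\PP(A_{n,d})/\GL_n]$ as a $\mu_d$-gerbe and a $\gm$-gerbe over it with the same Brauer class, apply Proposition~\ref{prop.ed-gerbe} to each, and compute $\cd\alpha=\cd(\GL_n/\mu_d)$ via the versality of the linear, generically free $(\GL_n/\mu_d)$-action on $A_{n,d}$ together with Lemma~\ref{lem.brauer-severi.cd}(b). The only differences are cosmetic: you make explicit, via the morphism of coboundary sequences induced by $\mu_d\hookrightarrow\gm$, why the two generic gerbe classes agree (the paper simply asserts $\alpha\in\H^2(k(\eta),\mu_d)\subset\H^2(k(\eta),\gm)$ is shared), and you flag that the non-amenable $\gm$-gerbe side needs the ``sup over dominant points'' reading of $\ged$ rather than Definition~\ref{def.generic-ed}, a point the paper passes over silently. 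Both versions are correct; yours carries a bit more scaffolding but no new idea.
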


Part (a) was previously known; see~\cite[Theorem 15.1]{ber}. Part (b) answers an
open question from~\cite[Remark 14.8]{ber}.

\begin{proof} Let $\cX = [\PP(A_{n, d}) /\GL_n]$, $\cY = [A_{n, d} /\GL_n]$, and
$\overline{\cX} = [\PP(A_{n, d}) /\PGL_n]$. Consider the diagram 
\[ \xymatrix{  & \cY_{\eta}  \ar[dl] \ar[ddl] & \cY \ar[dl] 
\ar[ddl]^{\text{\tiny $\mu_d$-gerbe}}  \\ 
\cX_{\eta} \ar[d] & \cX \ar[d]_{\text{\tiny $\gm$-gerbe}} & 
\\ \eta \ar[r] & \overline{\cX} & } \] For $n$ and $d$ as in the statement of
the proposition, $[\PP(A_{n, d}) /\PGL_n]$ is generically a scheme (see
Example~\ref{ex.amenable2}). Denote the generic point of this scheme by $\eta$
and its function field by $k(\eta)$. The pull-backs $\cY_{k(\eta)}$ and
$\cX_{k(\eta)}$ are, respectively, a $\mu_d$-gerbe and a $\gm$-gerbe over
$k(\eta)$; these two gerbes give rise to the same class $\alpha \in
\H^2(k(\eta), \mu_d) \subset \H^2(k(\eta), \gm)$. By
Proposition~\ref{prop.ed-gerbe} 
\[\ed_{k(\eta)} \cY_{k(\eta)} = \cd \alpha \quad
\text{and} \quad \ed_{k(\eta)}\cX_{k(\eta)} = \cd \alpha + 1. \] Since 
\[ \trdeg_k k(\eta) = \binom{n + d - 1}{d} - n^2 \, , \] it remains 
to show that
\begin{equation} \label{e.versal-class} \cd \alpha = \cd(\GL_n/\mu_d) \, .
\end{equation} The action of $G = \GL_n/\mu_d$ on $A_{n, d}$ is linear and
generically free. Thus it gives rise to a versal $G$-torsor $t \in \H^1(k(\eta),
G)$, and $\alpha$ is the image of $t$ under the natural coboundary map
$\H^1(k(\eta), G) \to \H^2(k(\eta), \mu_d)$ associated with 
the exact sequence $1 \to \mu_d \to \GL_n \to G \to 1$. 
As we explained in Section~\ref{sect.cd},
$\cd t = \cd(\GL_n/\mu_d)$. On the other hand, by
Lemma~\ref{lem.brauer-severi.cd}(b), $\cd \alpha = \cd t$,
and~\eqref{e.versal-class} follows. 
\end{proof}

\section{Gerbe-like stacks} 
The purpose of the next two sections is to prove the
Genericity Theorem~\ref{thm:genericity}. The proof of the genericity theorem for
\dm stacks in~\cite{brv-jems} relied on a stronger form of genericity for
gerbes; see~\cite[Theorem 5.13]{brv-jems}. Our proof of
Theorem~\ref{thm:genericity} will follow a similar pattern, except that instead
of working with gerbes we will need to work in the more general setting of
gerbe-like stacks, defined below. The main result of this section,
Theorem~\ref{thm:genericity-gerbes}, is a strong form of genericity for
gerbe-like stacks.

\begin{definition} A \dm stack $\cX$ is \emph{gerbe-like} if its inertia stack
$\cI_{\cX}$ is étale over $\cX$.

If $\cX$ is an algebraic stack, the \emph{gerbe-like part} $\cX^{0}$ of $\cX$ is
the largest open substack of $\cX$ that is \dm and gerbe-like. \end{definition}

\begin{remark} If an algebraic stack $\cX$ is \dm, then the inertia stack
$\cI_{\cX} \arr \cX$ is unramified. Hence, if $\cX$ is also reduced then by
generic flatness the gerbe-like part $\cX^{0}$ of $\cX$ is dense in $\cX$.
\end{remark}

\begin{lemma}\label{lemma1} Let $\cX$ be a reduced \dm stack. Suppose that the
inertia stack $\cI_{\cX}$ is finite and étale over $\cX$. Then $\cX$ is a proper
étale gerbe over an algebraic space. \end{lemma}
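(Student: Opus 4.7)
The plan is to produce a coarse moduli space via Keel--Mori and then verify, étale-locally on it, that $\cX \arr \bX$ has the structure of an étale gerbe.

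First I would invoke the Keel--Mori theorem: since $\cX$ is a \dm stack with finite inertia, there exists a proper morphism $\pi \colon \cX \arr \bX$ to an algebraic space $\bX$ that is the coarse moduli space of $\cX$. By the standard local structure theorem for \dm stacks with finite inertia, after replacing $\bX$ by an étale cover I may assume $\cX \cong [U/G]$ with $U$ an affine scheme and $G$ a finite (constant) group acting on $U$, so that $\bX \cong U/G$ on this chart.

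Next I would exploit the hypothesis that $\cI_{\cX} \arr \cX$ is étale. Pulled back along $U \arr \cX$, the inertia becomes $\bigsqcup_{g \in G} U^{g} \arr U$, where $U^{g}$ is the fixed locus of $g$, and étaleness forces each $U^{g}$ to be clopen in $U$. Equivalently, the pointwise stabilizer $\Stab_{G}(u)$ is locally constant, so on every connected component $U_{0}$ of $U$ it equals a fixed subgroup $H \leq G$. Replacing $U$ by $U_{0}$ and $G$ by the setwise stabilizer $H' \leq G$ of $U_{0}$ (i.e.\ restricting to a single $G$-orbit of components), I reduce to $\cX \cong [U_{0}/H']$. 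Because the stabilizers at $H'$-conjugate points must coincide, $H$ is normal in $H'$, it acts trivially on $U_{0}$, and $Q \eqdef H'/H$ acts freely on $U_{0}$.

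Finally, the freeness of $Q$ realizes $V \eqdef U_{0}/Q$ as an algebraic space forming an étale chart of $\bX$, and the factorization $[U_{0}/H'] \arr V$ presents $\cX$ locally as a $BH$-gerbe: its geometric fibers are all $BH$, and it is trivialized after pulling back along the $Q$-torsor $U_{0} \arr V$. Since $H$ is a finite étale group scheme, $BH \arr \Spec k$ is étale, whence $\pi$ is étale on this chart; being a gerbe is itself an étale-local property on the target, so $\pi \colon \cX \arr \bX$ is an étale gerbe globally, and properness was already supplied by Keel--Mori. The main obstacle I anticipate is the group-theoretic bookkeeping of step two---isolating the normal subgroup $H \triangleleft H'$ that acts trivially and verifying freeness of the quotient $Q$ on $U_{0}$---while the remaining assembly reduces to routine étale descent.
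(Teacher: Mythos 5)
Your argument follows the same route as the paper: produce the moduli space (Keel--Mori plus finiteness of inertia), pass to an étale chart on which $\cX\cong[U/G]$ for a finite group $G$, observe that étaleness of $\cI_{\cX}$ forces the stabilizer subgroup to be locally constant, and conclude that on a connected chart there is a single normal subgroup $H\triangleleft G$ which is the stabilizer of every point, so that $G/H$ acts freely and $[U/G]\arr U/(G/H)=\bX$ is an $H$-gerbe. The only cosmetic difference is that the paper arranges the étale cover so that $U$ is already connected, whereas you begin with an affine $U$ and then pass to a connected component $U_{0}$ and its setwise stabilizer $H'$; both reductions are standard and lead to the same picture.
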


\begin{remark} The condition that $\cX$ be reduced can be eliminated. However,
it makes the proof marginally simpler, and will be satisfied in all cases of
interest to us in this paper. \end{remark}

\begin{proof} Let $\bX$ be the moduli space of $\cX$; we claim that $\cX$ is a
proper étale gerbe over $\bX$. This is a local problem in the étale topology of
$\bX$. Hence, after passing to an étale covering of $\bX$, we may assume 
that $\bX$ is a connected scheme, and there exists a finite reduced connected
scheme $U$, with a finite group $G$ acting on $U$, such that $\cX = [U/G]$. The
pullback of $\cI_{\cX}$ to $U$ is the closed  subscheme of $G \times U$ defined
as representing the functor of pairs $(g, u)$ with $gu= u$. The fact that
this pullback is étale over $U$ translates into the condition 
that the order of the stabilizer of a geometric point is locally 
constant on $U$. Since $U$ is connected, this means that 
there exists a subgroup $H$ of $G$ that is the stabilizer of all 
the geometric points of $G$; this subgroup is necessarily
normal. The induced action of $(G/H)$ is free, and $U/(G/H) = \bX$; hence $U$ is
étale over $\bX$, and $\cX = [U/G]$ is a gerbe banded by $H$ over $\bX$.
\end{proof}

\begin{lemma}\label{lemma2} Suppose that $\cX$ is a gerbe-like \dm stack, $\cY
\arr \cX$ a representable unramified morphism. Then $\cY$ is also gerbe-like.
\end{lemma}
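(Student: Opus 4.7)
The plan is to realize the inertia stack $\cI_{\cY}$ as an open substack of the base change $f^{*}\cI_{\cX}\eqdef\cI_{\cX}\times_{\cX}\cY$, and then to deduce that $\cI_{\cY}\to\cY$ is étale from the hypothesis that $\cI_{\cX}\to\cX$ is étale. Along the way I would also verify that $\cY$ is \dm, by factoring the diagonal $\Delta_{\cY}\colon\cY\to\cY\times\cY$ as $\cY\xrightarrow{\Delta_{f}}\cY\times_{\cX}\cY\to\cY\times\cY$, where the first morphism is an open immersion (because $f$ is representable and unramified) and the second is the base change of the unramified diagonal $\Delta_{\cX}$; the composition is then unramified.

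Using the same factorization and unpacking the iterated fiber products defining $\cI_{\cY}=\cY\times_{\cY\times\cY}\cY$, I would establish the $2$-Cartesian square
\[
\xymatrix{
\cI_{\cY} \ar[r]\ar[d] & f^{*}\cI_{\cX} \ar[d] \\
\cY \ar[r]^{\Delta_{f}} & \cY \times_{\cX} \cY\,.
}
\]
Because $\Delta_{f}$ is an open immersion, its base change $\cI_{\cY}\hookrightarrow f^{*}\cI_{\cX}$ is also an open immersion; and since $\cI_{\cX}\to\cX$ is étale, so is its base change $f^{*}\cI_{\cX}\to\cY$. The composition $\cI_{\cY}\to\cY$ is therefore étale, so $\cY$ is gerbe-like.

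The main (though routine) obstacle is verifying that the displayed square is $2$-Cartesian. One must trace through the $2$-fiber product carefully: a $T$\dash point of $\cY\times_{\cY\times_{\cX}\cY}f^{*}\cI_{\cX}$ consists of $y\in\cY(T)$, $(y',\alpha)\in f^{*}\cI_{\cX}(T)$, and an isomorphism between $(y,y,\id_{f(y)})$ and $(y',y',\alpha)$ in $\cY\times_{\cX}\cY$, and one has to check that such data is equivalent to a pair $(y,\beta)$ with $\beta\in\underaut_{\cY}(y)$ and $f(\beta)=\alpha$. This is a short but bookkeeping-heavy $2$-categorical computation.
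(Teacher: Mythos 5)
Your proof is correct and is essentially the same argument as the paper's, just packaged a bit more cleanly. The paper's proof also exploits the factorization $\cY\xrightarrow{\Delta_f}\cY\times_{\cX}\cY\to\cY\times\cY$ and the fact that $\Delta_f$ is an open immersion; the difference is that the paper identifies $\cI_{\cY}$ as an open substack of the larger stack $\cY\times_{\cX}\cY\times_{\cX}\cI_{\cX}=(\cY\times_{\cX}\cY)\times_{\cY\times\cY}(\cY\times_{\cX}\cY)$ (which is \'etale over $\cY\times_{\cX}\cY$ by base change) and then restricts to the open $\Delta_f(\cY)\subset\cY\times_{\cX}\cY$, whereas you short-circuit this by exhibiting the single $2$-cartesian square identifying $\cI_{\cY}$ directly as an open substack of $f^{*}\cI_{\cX}$, which is already \'etale over $\cY$. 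Your variant avoids the extra $\cY$-factor and the last ``descent to $\cY$'' step, so it is perhaps marginally more transparent; conceptually the two proofs use identical ideas.
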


\begin{proof} The inertia stack $\cI_{\cX}$ of a stack $\cX$ is the fiber
product $\cX \times_{\cX \times \cX} \cX$. We have a diagram \[ \xymatrix@=15pt{
\cY\times_{\cX}\cY\times_{\cX} \cI_{\cX} \ar[rrr]\ar[rd]\ar[ddd]&&& \cY
\times_{\cX}\cY\ar[ld]\ar[ddd]\\ & \cI_{\cX}\ar[d]\ar[r]&\cX \ar[d]\\
& \cX\ar[r]&\cX \times \cX\\ \cY\times_{\cX}\cY \ar[ru]\ar[rrr]&&& \cY \times
\cY \ar[lu] } \] in which all the squares are cartesian. This implies the
equality \[ \cY\times_{\cX}\cY\times_{\cX} \cI_{\cX} = (\cY \times_{\cX}\cY)
\times_{\cX} (\cY \times_{\cX}\cY)\,, \] which in turn tells us that $(\cY
\times_{\cX}\cY) \times_{\cX} (\cY \times_{\cX}\cY)$ is étale over $\cY
\times_{\cX}\cY$. The hypotheses on $\cY \arr \cX$ imply that the diagonal $\cY
\arr \cY \times_{\cX} \cY$ is an open embedding. Thus $\cI_{\cY} =
\cY_{\cY\times\cY}$ is an open substack of $(\cY \times_{\cX}\cY) \times_{\cX}
(\cY \times_{\cX}\cY)$, so it is étale over $\cY \times_{\cX} \cY$ hence it is
étale over $\cY$, as claimed. \end{proof}

From this and the results in \cite{brv-jems}, it is easy to deduce the
following. Given a field $L/k$ and $\xi\in \cX(\spec L)$, we denote by
$\codim_{\cX} \xi$ the codimension of the closure of the image of the
corresponding morphism $\spec L \arr \cX$.

\begin{theorem}\label{thm:genericity-gerbes} Let $\cX$ be an integral gerbe-like
\dm stack which is smooth of finite type over a field $k$. Let $L$ be an
extension of $k$ and $\xi \in  \cX(\spec L)$. Then \[ \ed_{\cX} \xi \leq
\ed_{k(\bX)}\cX_{k(\bX)} + \dim \bX - \codim_{\cX} \xi. \] \end{theorem}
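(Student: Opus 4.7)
The plan is to deduce this from the strong genericity theorem for gerbes, \cite[Theorem~5.13]{brv-jems}, using Lemma~\ref{lemma1}.

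First, I would verify that Lemma~\ref{lemma1} applies to $\cX$. Since $\cX$ is gerbe-like, the inertia $\cI_\cX \arr \cX$ is \'etale, and hence its fiber rank is locally constant; integrality of $\cX$ forces this rank to be constant, so $\cI_\cX \arr \cX$ is in fact finite \'etale. Lemma~\ref{lemma1} then exhibits $\cX$ as a proper \'etale gerbe $\phi\colon \cX \arr \bX$ over its moduli space $\bX$, an integral smooth algebraic space with $\dim \bX = \dim \cX$, and the generic gerbe $\cX_{k(\bX)}$ is precisely the fiber of $\phi$ over the generic point of~$\bX$.

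With $\cX$ realized as a gerbe, let $\bar\xi \in \bX(L)$ be the image of $\xi$ and let $\bZ \subseteq \bX$ be the Zariski closure of the image of $\bar\xi$. Because $\phi$ is a gerbe it preserves codimensions, so $\dim \bZ = \dim \bX - \codim_\cX \xi$. Applying the genericity theorem for gerbes \cite[Theorem~5.13]{brv-jems} to the gerbe $\phi$ at the point $\xi$ would then yield
\[
\ed_\cX \xi \;\leq\; \ed_{k(\bX)} \cX_{k(\bX)} + \dim \bZ \;=\; \ed_{k(\bX)} \cX_{k(\bX)} + \dim \bX - \codim_\cX \xi,
\]
which is the asserted inequality.

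The main obstacle I anticipate is that \cite[Theorem~5.13]{brv-jems} is stated for gerbes whose base is a scheme, whereas $\bX$ is only an algebraic space. I would handle this by choosing an \'etale presentation $V \arr \bX$ by a scheme $V$, pulling back to the gerbe-like stack $\cX_V := \cX \times_\bX V$ (still gerbe-like by Lemma~\ref{lemma2}), and applying the scheme version of the gerbe genericity theorem to $\cX_V \arr V$. Transferring the resulting bound back to $\cX$ is then a matter of bookkeeping: $V \arr \bX$ is \'etale, so dimensions and codimensions of closed subsets are preserved, and the essential dimension of the generic gerbe is an \'etale-local invariant. The subtlest point is that a lift of $\bar\xi$ to $V$ may only exist after a finite separable extension $L'/L$; one must then argue that a minimal field of definition of the lifted point in $\cX_V$ descends (via the \'etaleness of the inertia and a Galois-descent argument) to a field of definition of $\xi$ in $\cX$ of the same transcendence degree, which is precisely the sort of step that \cite{brv-jems} carries out in the pure gerbe setting.
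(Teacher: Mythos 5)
The proposal has a genuine gap at its very first step. You claim that since $\cI_{\cX} \to \cX$ is \'etale, ``its fiber rank is locally constant,'' and that integrality of $\cX$ then forces the inertia to be finite \'etale. But the geometric fiber cardinality of an \'etale morphism is only \emph{lower} semi-continuous, not locally constant; that property characterizes \emph{finite} \'etale maps. An \'etale group algebraic space over a connected base can perfectly well have non-constant fibers: for instance $X \sqcup (X \smallsetminus Z) \longrightarrow X$, with identity section the first summand, is an \'etale $X$-group with fibers of order two generically and order one along $Z$; such a thing can occur as the inertia of a reduced gerbe-like Deligne--Mumford stack, since the paper's hypotheses do not impose separatedness (only quasi-affine diagonal). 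If \'etale inertia automatically implied finite inertia there would be no distinction between ``gerbe-like'' and ``gerbe,'' and both Lemma~\ref{lemma1} and the two-case structure of the paper's proof would be pointless.

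This is exactly the point on which the actual proof diverges from yours. When $\cI_{\cX}$ is finite, your argument coincides with the paper's: Lemma~\ref{lemma1} exhibits $\cX$ as a proper \'etale gerbe over its (smooth) moduli space and one invokes \cite[Theorem~5.13]{brv-jems}. In the general case, however, the paper first invokes \cite[Lemma~6.4]{brv-jems} to produce a representable \'etale morphism $\cY \to \cX$ through which the given point $\spec L \to \cX$ factors and such that $\cY$ \emph{does} have finite inertia; Lemma~\ref{lemma2} then guarantees that $\cY$ is still gerbe-like, Lemma~\ref{lemma1} makes it a proper \'etale gerbe, and only then does \cite[Theorem~5.13]{brv-jems} apply (together with the elementary comparison of essential dimensions and codimensions between $\xi$ on $\cX$ and its lift on $\cY$). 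Your proposal would need this reduction step. Your final point about $\bX$ being an algebraic space rather than a scheme is a reasonable worry, but it is a smaller issue than the one above, and the paper simply applies \cite[Theorem~5.13]{brv-jems} over the algebraic-space base without the extra passage to an \'etale chart $V \to \bX$.
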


\begin{proof} If the inertia stack $\cI_{\cX}$ is finite over $\cX$, then, by
Lemma~\ref{lemma1}, $\cX$ is an étale proper gerbe over a smooth $k$-scheme, and
the statement reduces to \cite[Theorem~5.13]{brv-jems}. In the general case,
from \cite[Lemma~6.4]{brv-jems} we deduce the existence of an étale
representable morphism $\cY \arr \cX$, such that $\cY$ is an integral \dm stack
with finite inertia, and the morphism $\spec L \arr \cY$ factors through $\cY$.
By Lemmas \ref{lemma1} and \ref{lemma2}, the stack $\cY$ is a proper étale gerbe
over a smooth algebraic space, hence \cite[Theorem~5.13]{brv-jems} can be
applied to it. Let $\eta \in \spec \cY$ be a point in $\cY(L)$ mapping to $\xi$.
Then we have the relations \begin{align*} \ed_{k}\xi &\leq \ed_{k}\eta,\\
\ed_{k(\cX)}\cX_{k(\bX)} &\geq \ed_{k(\bY)}\text{ and}\\ \codim_{\cX}\xi &=
\codim_{\cY}\eta; \end{align*} hence the general case of the Theorem follows
from \cite[Theorem~5.13]{brv-jems}. \end{proof}

\section{The genericity theorem} \label{sect.genericity}

We now proceed with the proof of Theorem~\ref{thm:genericity}. As before, let
$\cY$ be the closure of the image of $\xi\colon \spec L \arr \cX$. The stack
$\cY$ is integral, and since $\Char k = 0$, $\cY$ is generically smooth. Let
$\pi\colon \cM \arr \AA^{1}_{k}$ the deformation to the normal bundle of $\cY$
inside $\cX$; then $\pi^{-1}(\AA_{k}^{1} \setminus \{0\}) = \cX\times_{\spec
k}(\AA_{k}^{1} \setminus \{0\})$, while $\pi^{-1}(0)$ is isomorphic to the
normal bundle $\cN$ of $\cY$ in $\cX$.

\begin{lemma}\label{lemma3}  $\cM^{0} \cap \cN \neq \emptyset\,.$ \end{lemma}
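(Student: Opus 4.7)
The plan is to exhibit a point of $\cN$ whose inertia group in $\cM$ is finite; in characteristic zero this automatically makes the inertia \'etale and places the point in $\cM^{0}$. Since $\cN \hookrightarrow \cM$ is a closed immersion, the inertia of $\cM$ at $(y,v) \in \cN$ equals the inertia of $\cN$ at that point, which is the stabilizer $\Stab_{\underaut y}(v)$ for the natural linear action of $\underaut y$ on the fiber $\cN_{y}$ of the normal bundle. So I need to find $y \in \cY$ and $v \in \cN_{y}$ with $\Stab_{\underaut y}(v)$ finite.

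If the generic automorphism group of $\cY$ is already finite, then $(y,0)$ works for $y$ the generic point. Otherwise let $y$ be the generic point of $\cY$, with positive-dimensional generic stabilizer $G \eqdef \underaut y$ over the residue field $k(y)$. Since $\underaut_{L}\xi = G_{L}$ is reductive by hypothesis and $L$ extends $k(y)$, descent of reductivity gives that $G$ is reductive.

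The crux is then to show that the reductive group $G$ acts on $\cN_{y}$ with finite generic stabilizer. My plan is to invoke a Luna-type slice theorem for algebraic stacks with reductive stabilizers: after passing to a geometric point $\bar{y}$ above $y$, one obtains an \'etale neighborhood of $\bar{y}$ in $\cX$ of the form $[V/G]$, with $V$ a smooth $G$-scheme having a $G$-fixed point corresponding to $\bar{y}$ and $T_{0}V \simeq T_{\bar{y}}\cX$ as $G$-representations. Under this identification $\cY$ pulls back to a $G$-invariant closed subscheme $W \subset V$ contained in the fixed locus $V^{G}$, because $\cY$ is the locus of points whose stabilizer contains $G$; hence $T_{0}W \subset V^{G}$ is a trivial $G$-subrepresentation and $\cN_{\bar{y}} \simeq V/T_{0}W$ as $G$-representations. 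Amenability of $\cX$ forces $[V/G]$ to contain a nonempty \dm open, i.e.\ $G$ acts on $V$ with finite generic stabilizer; splitting off the trivial $G$-subrepresentation $T_{0}W$, this transfers to the $G$-action on $\cN_{\bar{y}}$ and produces a vector $v$ with $\Stab_{G}(v)$ finite.

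Such a $v$ supplies a geometric point of $\cM^{0} \cap \cN$, completing the argument. The main obstacle is the local slice step: it rests on a Luna-type structural theorem at a (possibly non-closed) point of an algebraic stack with reductive stabilizer. Should one wish to avoid invoking this as a black box, the same conclusion can be extracted from the $G$-equivariant formal structure of $\cX$ at $\bar{y}$---whose completion takes the form $[\widehat{V}/G]$ for a $G$-representation $V$---together with equivariant deformation theory, which is enough to identify the $G$-representation on the normal fiber and deduce the finiteness statement about generic stabilizers.
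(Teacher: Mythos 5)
The central flaw is in the very first sentence of your plan. The locus $\cM^{0}$ is the largest open substack that is \dm \emph{and gerbe-like}, i.e., on which the inertia $\cI_{\cM}\arr\cM$ is \emph{\'etale}. Finiteness of the inertia group at a point of $\cN$ is necessary for that point to lie in $\cM^{0}$, but by no means sufficient, even in characteristic zero. \'Etaleness forces the stabilizer order to be locally constant near the point, and near a point $(y,v)\in\cN$ you must in particular compare with points of $\cM\setminus\cN \simeq \cX\times(\AA^{1}\setminus 0)$, whose generic stabilizer order is that of $\cX$. The simplest counterexample to your implication: $\cX=[\AA^{2}/(\ZZ/2)]$ with the involution $(x,y)\mapsto(-x,y)$ and $\cY$ the quotient of the fixed line $\{x=0\}$. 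The generic automorphism group of $\cY$ is the finite group $\ZZ/2$, but the point $(y,0)\in\cN$ has stabilizer $\ZZ/2$ while a nearby point of $\cM\setminus\cN$ has trivial stabilizer, so $\cI_{\cM}$ is not flat and hence not \'etale at $(y,0)$. In particular, your first case (``if the generic automorphism group of $\cY$ is already finite, $(y,0)$ works'') is wrong. What you actually need to produce is a point of $\cN$ at which the stabilizer has the \emph{minimal} order, namely the order of the generic stabilizer of $\cX$; the paper then upgrades this to \'etaleness of $\cI_{\cM}$ at that point via the divisor argument in the final paragraph (the bad locus of $\cM\setminus\cN$ does not accumulate on $\cN$) together with a direct check of the infinitesimal lifting criterion against the linear model $[M/G]$, not by any appeal to ``finite $\Rightarrow$ \'etale''.

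Two further points. First, your main structural input is a Luna-type \'etale slice theorem for Artin stacks at a point with reductive automorphism group. That result was not available when this argument was written; the entire technical content of the paper's proof (Lemma~\ref{lemma4} and the subsequent Claim about the compatible isomorphisms $\cM_{n}\simeq[M_{n}/G]$) exists precisely to replace it. The paper's substitute is a formal local description at a general \emph{closed} point $y_{0}$ of $\cY$ over $\bar{k}$ built from Alper's infinitesimal result, and it explicitly flags the obstruction you would meet if you tried to pass to the limit: $G$ acts on each $R_{n}$ but does not act algebraically on $R=\projlim R_{n}$ unless $G$ is finite, which is why they argue level-by-level with Rees algebras and then invoke the infinitesimal criterion. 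Invoking an \'etale slice as a black box skips exactly the hard part. Second, your justification that $\cY$ is carried into $V^{G}$, ``because $\cY$ is the locus of points whose stabilizer contains $G$,'' is not correct as stated; $\cY$ is defined as the closure of the image of $\xi$, not a stabilizer-defined locus. The triviality of the $G$-action on the tangent direction to $\cY$ is a genuine conclusion that requires first shrinking $\cY$ so that the inertia becomes flat with reductive fibers of constant dimension and component count, and then a deformation-theoretic argument (see the end of the proof of Lemma~\ref{lemma4}).
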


Theorem~\ref{thm:genericity} follows from Lemma~\ref{lemma3} and
Theorem~\ref{thm:genericity-gerbes} by the same argument as
in~\cite[Theorem~6.1]{brv-jems}. This argument is quite short, and we reproduce
it here for sake of completeness.

Let $L$ be an extension of $k$ and let $\xi$ be an object of $\cX(L)$. Call
$\cY$ the closure of the image of the morphism $\xi\colon \spec L \arr \cX$,
with its reduced stack structure. Set $\cN^{0} \eqdef \cM^{0} \cap \cN$. Then
the fiber product $\spec L \times_{\cY} \cN$ is a vector bundle over $\spec L$,
and $\spec L \times_{\cX} \cM^{0}$ is a non-empty open subscheme. Hence
$\xi\colon \spec L \arr \cY$ can be lifted to $\cN^{0}$; this gives an object
$\eta$ of $\cN^{0}(\spec L)$ mapping to $\xi$ in $\cY$. Clearly the essential
dimension of $\xi$ as an object of $\cX$ is the same as its essential dimension
as an object of $\cY$, and $\ed_{k}\xi \leq \ed_{k}\eta$. Let us apply
Theorem~\ref{thm:genericity-gerbes} to the gerbe $\cM^{0}$. The function field
of the moduli space $\bM$ of $\cM$ is $k(\bX)(t)$, and its generic gerbe is
$\cX_{k(\bX)(t)}$; by \cite[Proposition~2.8]{brv-jems}, we have $\ed_{k(\bX)(t)}
\cX_{k(\bX)(t)} \leq \ed_{k(\bX)}\cX_{k(\bX)}$. The composite $\spec L \arr
\cN^{0} \subseteq \cM^{0}$ has codimension at least~$1$, hence we obtain
\begin{align*} \ed_{k} \xi &< \ed_{k(\bX)(t)}\cX_{k(\bX)(t)} + \dim\cM\\ &=
\ed_{k(\bX)}\cX_{k(\bX)} + \dim\cX + 1. \end{align*} This concludes the proof.

\begin{proof}[Proof of Lemma~\ref{lemma3}] If $X$ is a finite dimensional
representation of a group scheme $G$ over a field $k$, we will identify $X$ with
the affine space $\spec (\sym^{\bullet}_{k}X^{\vee})$.

Let us suppose that $X$ is a finite dimensional representation of a linearly
reductive algebraic group $G$ and $Y \subseteq X$ is a subrepresentation. Since
$G$ is reductive, we have a $G$-equivariant splitting $X \simeq Y \oplus Y'$.
Set $\cX \eqdef [X/G]$ and $\cY \eqdef [Y/G]$. Assume that the generic
stabilizer of the action of $G$ on $X$ is finite. Then $\cX$ is amenable, and
$\cY \subseteq \cX$ is a closed integral substack.

It is easy to see that the deformation to the normal bundle $M$ of $Y$ in $X$ is
$G$-equivariantly isomorphic to $X\times_{k} \AA_{k}^{1}$ (where the group $G$
acts trivially on $\AA^{1}_{k}$); the projection \[ Y \times Y'\times_{k}
\AA_{k}^{1} = X \times_{k} \AA_{k}^{1} \simeq M \arr X\times_{k} \AA_{k}^{1} \]
is given by the formula $(y, y', t) \arrto (ty, y', t)$. The deformation to the
normal bundle $\cM$ of $\cY$ is $[M/G] = [X/G] \times_{k} \AA_{k}^{1}$; hence
$\cM^{0} = \cX^{0} \times_{k} \AA_{k}^{1}$, and it is obvious that $\cM^{0} \cap
\cN \neq \emptyset$.

The proof in the general case will be reduced to this by a formal slice
argument.

We may base-change to the algebraic closure of $k$; so we may assume that $k$ is
algebraically closed. By deleting the singular locus of $\cY$, we may assume
that $\cY$ is smooth; by further restricting, we may assume that the inertia
stack $\cI_{\cY}$ is flat over $\cY$, and that all geometric fibers are
reductive, and have the same numbers of connected components.

Let $y_{0} \arr \spec k \arr \cY$ be a general closed point. The residual gerbe
$\cG_{y_{0}} \arr \spec k$ \cite[\S~11]{LMB} admits a section
$\spec k \arr \cG_{y_{0}}$, since $k$ is algebraically closed; hence, if $G
\eqdef \underaut_{k}y_{0}$ it the automorphism group scheme of $y_{0}$, we have
$\cG_{y_{0}} \simeq \cB_{k}G$. The embedding of stacks $\cB_{k}G \arr \cY$ is of
finite type; hence it is easy to see that it is a locally closed embedding, from
Zariski's main Theorem for stacks \cite[Théorème~16.5]{LMB}.

Let $U \arr \cX$ be a smooth morphism, where $U$ is a scheme, together with a
lifting $u_{0}\colon \spec k \arr U$ of $y_{0}$. Call $\cX_{n}$ the $n\th$
infinitesimal neighborhood of $\cB_{k}G$ inside $\cX$: in other words, if $\cU$
is an open substack of $\cX$ containing $\cB_{k}G$ as a closed substack, and we
denote by $I$ the sheaf of ideals of $\cB_{k}G$ inside $\cU$, then $\cX_{n}$ is
the closed substack of $\cX$ defined by the sheaf of ideals $I^{n+1}$. In
particular, $\cX_{0} = \cB_{k}G$.

\begin{lemma}\label{lemma4} There exists a finite dimensional representation $X$
of $G$ with finite generic stabilizers and a trivial subrepresentation $Y
\subseteq X$, with the following property. If we denote by $X_{n}$ and $\cY_{n}$
the $n\th$ infinitesimal neighborhoods of the origin, there is a sequence of
isomorphisms $\cX_{n} \simeq [X_{n}/G]$, compatible with the embeddings $\cX_{n}
\subseteq \cX_{n+1}$ and $[X_{n}/G] \subseteq [X_{n+1}/G]$, that induce
isomorphisms of $\cY_{n}$ with $[Y_{n}/G]$.

Furthermore, denote by $\widehat{X}$ the spectrum of the completion of the local
ring of $X$ at the origin. Then there exists a smooth morphism $U \arr \cX$ with
a closed point $u_{0} \in U$ mapping to $y_{0}$ in $\cY$, and an isomorphism of
$\widehat{X}$ with the spectrum $\widehat{U}$ of the completion of the local
ring of $U$ at $u_{0}$, such that \begin{enumeratea}

\item  the sequence of composites $X_{n} \arr [X_{n}/G] \simeq \cX_{n} \subseteq
\cX$ is obtained by restriction from the composite morphism $\widehat{X} \simeq
\widehat{U} \arr U \arr \cX$, and

\item the inverse image of $Y$ in $\widehat{X}$ corresponds to the inverse image
of $\cY$ in $\widehat{U}$.

\end{enumeratea} \end{lemma}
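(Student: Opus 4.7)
The plan is to reduce the lemma to a slice-type local structure theorem for algebraic stacks with linearly reductive stabilizer. Since $k$ is algebraically closed of characteristic zero and $G \eqdef \underaut_{k}y_{0}$ is reductive, hence linearly reductive, Luna's \'etale slice theorem in its stack-theoretic form produces a finite-dimensional $G$-representation $X$ together with a representable \'etale morphism $\phi \colon [X/G] \arr \cX$ sending the origin $0 \in X$ to $y_{0}$ and inducing an isomorphism of residual gerbes there. Taking $U \eqdef X$ and $u_{0} \eqdef 0$, the composite $U \arr [X/G] \to \cX$ is smooth, and tautologically $\widehat{U} = \widehat{X}$. The generic $G$-stabilizer on $X$ is automatically finite, since $\phi$ is smooth and $\cX$ (being amenable) has finite generic stabilizer.

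Next I would take $Y \subseteq X$ to be the subrepresentation $T_{y_{0}}\cY$, carried to $T_{u_{0}}U$ via the isomorphism $T_{u_{0}}U \simeq T_{y_{0}}\cX$ induced by $\phi$. The preparatory shrinking of $\cY$ ensures that $\cI_{\cY}$ is flat over $\cY$ with geometric fibers of constant reductive type; together with the argument of Lemma~\ref{lemma1}, this shows that $\cY$ is \'etale-locally a trivial gerbe of the form $\bY \times_{k} \cB_{k}G$ with $G$ acting trivially on $\bY$. Consequently $G$ acts trivially on $Y = T_{y_{0}}\cY$, so $Y$ is a trivial subrepresentation. Linear reductivity of $G$ then provides a $G$-equivariant splitting $X = Y \oplus Y'$, and the preimage of $\cY$ under $\phi$ is a smooth $G$-invariant closed substack of $[X/G]$ whose tangent space at the origin is $Y$; by smoothness and $G$-equivariance it must coincide formally with $[Y/G]$, yielding the isomorphisms $\cY_{n} \simeq [Y_{n}/G]$ as well as the claim in item~(b) concerning the inverse image of $\cY$ in $\widehat{U}$.

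For the isomorphisms $\cX_{n} \simeq [X_{n}/G]$, the \'etale morphism $\phi$ carries $[X_{n}/G]$ isomorphically onto the $n$-th infinitesimal neighborhood of $\cB_{k}G = [\{0\}/G]$ inside $[X/G]$, and these isomorphisms are manifestly compatible with the tower of embeddings $\cX_{n} \subseteq \cX_{n+1}$. The sequence of composites $X_{n} \arr [X_{n}/G] \simeq \cX_{n} \subseteq \cX$ appearing in item~(a) arises by restriction from the smooth morphism $U = X \arr \cX$ factoring through $\phi$, so the supplementary statement is built into the construction.

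The main obstacle is the invocation of the stack-theoretic slice theorem itself, which is where linear reductivity of $G$ in characteristic zero is used essentially; this is the single step in the whole argument that depends on the reductivity hypothesis of Theorem~\ref{thm:genericity}, and the failure of such a linearization in more general settings is exactly what underlies the counterexamples to genericity mentioned in Example~6.5(b) of~\cite{brv-jems}. The remaining verifications are bookkeeping, provided one has carried out the preparatory shrinking of $\cY$ so that its inertia is flat with fibers of constant reductive type.
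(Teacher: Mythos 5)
Your approach is genuinely different from the paper's, and the difference is historically significant. You invoke a stack-theoretic \'etale slice theorem -- the statement, now due to Alper, Hall and Rydh, that near a point with linearly reductive stabilizer $G$ one has a representable \'etale map $[X/G] \arr \cX$ with $X$ a $G$-representation. The paper, dated March 2011, predates that theorem and therefore cannot use it. Instead the authors build a \emph{formal} slice by hand: they use Alper's earlier result (\cite[Propositions 4.1 and 4.2]{alper-local-quotient}) to extend the tautological torsor over $\cB_{k}G$ to compatible $G$-torsors $P_{n} \arr \cX_{n}$ over each infinitesimal neighborhood, identify $P_{n} = \spec R_{n}$, and prove directly that $R = \projlim R_{n}$ is a power series ring by checking formal smoothness via unobstructedness of deformations in $\cX$. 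The authors even remark parenthetically that $G$ does not act on $R$ as an algebraic group ``unless $G$ is finite; if it did, this would make the proof conceptually much simpler'' -- which is precisely the simplification the slice theorem now supplies and that you are invoking. Granting that theorem, your proof is essentially correct and substantially shorter; the paper's Artin-approximation argument for finiteness of generic stabilizers on $X$ becomes unnecessary since the map $[X/G] \arr \cX$ is \'etale by hypothesis.

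There is one nontrivial step you have glossed over. You set $Y \eqdef T_{y_{0}}\cY$ and claim the preimage $\phi^{-1}(\cY) \subseteq X$ ``must coincide formally with $[Y/G]$'' by smoothness and $G$-equivariance. A smooth $G$-invariant closed subscheme of a representation with tangent space $Y$ at the origin is not a priori the linear subspace $Y$; you need a formal $G$-equivariant change of coordinates on $\widehat X$ straightening $\phi^{-1}(\cY)$ to $Y$. This is true (and is a standard consequence of linear reductivity, built into the splitting $X = Y \oplus Y'$), but it requires proof, or else you should invoke the relative form of the slice theorem applied to the pair $(\cX, \cY)$, which directly arranges $\phi^{-1}(\cY)$ to be a subrepresentation. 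This is where the paper's more elementary construction actually has an advantage: by building $\widehat X \simeq \widehat U$ through the artinian pieces and then choosing coordinates $x_{1}, \dots, x_{n}$ in $R$ so that the pullback of the ideal of $\cY$ is $(x_{1}, \dots, x_{d})$, they obtain the desired identification of $Y$ with the preimage of $\cY$ \emph{by construction}, with no further straightening needed.
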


\begin{proof}

The tautological $G$-torsor $P_{0} \eqdef \spec k \arr \cB_{k}G$ extends to a
$G$-torsor $P_{n} \arr \cX_{n}$, in such a way that the restriction of $P_{n+1}$
to $\cX_{n} \subseteq \cX_{n+1}$ is isomorphic to $P_{n}$, by \cite[Propositions
4.1 and 4.2]{alper-local-quotient}. Each of the stacks $P_{n}$ is in fact a
scheme, because its reduced substack is; in fact, $P_{n}$ must be the spectrum
of a local artinian $k$-algebra $R_{n}$. Clearly, $\cX_{n} = [P_{n}/G]$.

If we denote by $V$ the maximal ideal of $R_{1}$, then $R_{1} = k \oplus V$; the
action of $G$ on $R_{1}$ induces a linear action of $G$ on $V$. The space $V$ is
isomorphic to $I/I^{2}$, which is a coherent sheaf on $\cB_{k}G$, i.e., a
representation of $G$. In turn, $I/I^{2}$ is the cotangent space of deformations
of $\spec k \arr \cX$, that is, the dual to the space of isomorphism classes of
liftings $\spec k[\epsilon] \arr \cX$ of $\spec k \arr \cX$ (here $k[\epsilon]$
denotes, as usual, the ring of dual numbers $k[x]/(x^{2})$).

The homomorphism $R_{n+1} \arr R_{n}$ induced by the embedding $P_{n} \simeq
P_{n+1}|_{\cX_{n}} \subseteq P_{n+1}$ is surjective; its kernel is the ideal
$I^{n}R_{n+1}$. Denote by $R$ the projective limit $\projlim_{n}R_{n}$. (Notice
that, while $G$ acts, by definition, on each of $R_{n}$, this does not,
unfortunately, induced an action of $G$ on $R$, as an algebraic group, unless
$G$ is finite; it if did, this would make the proof conceptually much simpler.)
If $x_{1}$, \dots,~$x_{n}$ is a set of elements of $R$ that project to a basis
for $V$ in $R_{1}$, the ring $R$ is a quotient of the power series ring
$k\ds{x_{1}, \dots,~x_{n}}$ by an ideal $J$ contained in $\frm_{R}^{2}$. We
claim that $J = 0$, i.e., $R$ is a power series ring. For this, it is enough to
check that $R$ is formally smooth over $k$, or, in other words, that if $A$ is a
local artinian $k$-algebra with reside field $k$ and $B$ is a quotient of $A$,
any homomorphism of $k$-algebras $R \arr B$ lifts to a homomorphism $R \arr A$.
Take $n \gg 0$; then $R \arr B$ factors through $R_{n}$. Consider the composite
\[ \spec B \arr \spec R_{n} \arr \cX_{n} \subseteq \cX\,; \] since $\cX$ is
smooth, deformations are unobstructed, i.e., this morphism extends to $\spec A
\arr \cX$. If $n \gg 0$, this factors as $\spec A \arr \cX_{n} \subseteq \cX$;
and since $\spec R_{n}$ is smooth over $\cX_{n}$, as it is a $G$-torsor, the
section $\spec B \arr \spec R_{n}$ of $\spec B \arr \cX_{n}$ lifts to a section
$\spec A \arr R_{n}$, giving the desired extension $R \arr R_{n} \arr A$.

Suppose that $U \arr \cX$ a smooth morphism, where $U$ is a scheme, with a
lifting $u_{0}\colon \spec k \arr U$ of $y_{0}$. Let us assume that $U$ is
minimal at $u_{0}$, or, in other words, that the tangent space of $U$ at $u_{0}$
maps isomorphically onto the deformation space of $\cX$ at $y_{0}$. Since $U$ is
smooth over $\cX$, the morphisms $\spec R_{n} \arr \cX$ lift to a compatible
system of morphism $\spec R_{n} \arr U$, sending $\spec k$ into $u_{0}$; these
yield a morphism $\spec R \arr \cO_{U, u_{0}}$, inducing a homomorphism of
$k$-algebras $\widehat{\cO}_{U, u_{0}} \arr R$. This is a homomorphism of power
series algebras over $k$ which gives an isomorphism of tangent spaces; hence it
is an isomorphism. This shows that the morphisms $\spec R_{n} \arr \cX$ yield a
flat morphism $\spec R \arr \cX$.

Call $d$ the codimension of $\cY$ in $\cX$ at the point $y_{0}$; after a base
change in $R = k\ds{x_{1}, \dots, x_{n}}$, we may assume that the inverse image
of the ideal of $\cY$ in $\cX$ is $(x_{1}, \dots, x_{d})$. Call $X$ the scheme
corresponding to the dual of the vector space $V = \generate{x_{1}, \dots,
x_{n}}$, that is, $X \eqdef \spec \sym^{\bullet}_{k}V$; call $Y$ the linear
subscheme defined by the ideal $(x_{1}, \dots, x_{d})$. Then $\widehat{X} =
\spec R$; the representation $X$ has all the required properties, except that we
have not yet proved that the action of $G$ on $X$ has finite generic
stabilizers, and the representation $Y$ is trivial.

To do this, let us call $I$ the pullback of the inertia stack of $[X/G]$ to $X$;
in other words, $I$ is the subscheme of $G \times_{\spec k} X$ defined by the
equation $gx = x$. We need to show that $I$ is generically finite over $X$.
Since $I$ is a group scheme over $X$, it has equidimensional fibers, hence it is
enough that there is an étale neighborhood $I' \arr I$ of the pair $(1, u_{0})$
in $I$ which is generically finite over $X$. The inverse image of $\cX_{n}$ in
$U$ is the $n\th$ infinitesimal neighborhood $U_{n}$ of $u_{0}$ in $U$. Denote
by $J$ the pullback of the inertia stack of $\cX$ to $U$; we have isomorphisms
$X_{n} \simeq U_{n}$, and compatible isomorphisms of the pullbacks of $I$ and
$J$ to $X_{n}$ and $U_{n}$ respectively. These induce an isomorphism of the
completions of $I$ and $J$ at $(1, x_{0})$ and $(1,u_{0})$ respectively; by
Artin approximation, the morphisms $I \arr X$ and $J \arr U$ are étale-locally
equivalent at $(1, x_{0})$ and $(1,u_{0})$. Since $J$ is generically finite over
$U$ it follows that the action of $G$ on $X$ has generically finite stabilizers.
Also, this implies that the stabilizer of a general closed point of $Y$ is
isomorphic to the isomorphism group scheme of a general point of $\spec k \arr
\cY$, hence it has the same dimension and the same number of connected
components of $G$; hence it equals $G$, and so the action of $G$ on $Y$ is
trivial, as claimed. \end{proof}

Consider the complement $\cF$ of $(\cM \setminus \cN)^{0}$ in $\cM \setminus
\cN$. Since $\cN$ is a divisor on $\cM$, the closure of $\cF$ in $\cM$ will not
contain it; hence, if $v$ is a general $k$-rational point of $\cN$, this has
neighborhood in which the only point of the inertia stack $\cI_{\cM}$ where this
can fail to be étale are over $\cN$; so it is enough to show that $\cI_{\cM}$ is
étale over $\cM$ at a general point of $\cN$. For this it suffices to show that
the locus of point of the inverse image $\cI_{\cN}$ of $\cN$ in $\cI_{\cM}$ at
which $\cI_{\cM}$ is étale over $\cM$ surjects onto $\cY$.

Denote by $N$ the normal bundle of $Y$ in $X$, and by $M$ the deformation to the
normal bundle. If $n_{0}$ is a general closed point of $N$, then the pullback
$I_{M}$ of the inertia stack of $[M/G]$ to $M$ is étale at $n_{0}$. Notice that
$I_{M}$ is étale at a general closed point of the fiber of $N$ over any $y_{0}
\in Y(k)$, since the action of $G$ on $Y$ is trivial, so translation by any
closed point of $Y$ is $G$-equivariant. Denote by $\cM_{n}$ the inverse image of
$\cX_{n} \times_{\spec k}\AA^{1}_{k}$ in $\cM$, and by $M_{n}$ the inverse image
of $X_{n} \times_{\spec k}\AA^{1}_{k}$ in $M$. 

\begin{claim} There is sequence of isomorphisms $\cM_{n} \simeq [M_{n}/G]$ 
compatible with the isomorphisms 
$\cX_{n} \simeq [X_{n}/G]$, and with the identity on $\AA^{1}_{k}$.
\end{claim}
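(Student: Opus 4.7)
The plan is to realize both $\cM_n$ and $[M_n/G]$ via the Rees algebra construction and invoke its functoriality. Smooth-locally on $\cX$, if $\cY$ is cut out by an ideal sheaf $I_\cY \subseteq \cO_\cX$, the deformation to the normal bundle is $\cM = \spec_\cX \mathcal{R}$, where $\mathcal{R} = \bigoplus_{j \in \ZZ} I_\cY^{j} t^{-j}\subseteq \cO_\cX[t, t^{-1}]$ (with the convention $I_\cY^j = \cO_\cX$ for $j \le 0$). The map $\cM \arr \cX \times \AA^1_k$ corresponds to $\cO_\cX[t] \hookrightarrow \mathcal{R}$, so $\cM_n = \spec_\cX(\mathcal{R}/I_\cY^{n+1}\mathcal{R})$; a direct computation shows that its graded piece in degree $-j$ is $I_\cY^{j}/I_\cY^{n+j+1}$ and in degree $j \ge 0$ is $\cO_\cX/I_\cY^{n+1}$. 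The parallel formulas describe $M$ and $M_n$ over $X$ starting from $I_Y$, and, since the $G$-action on $X$ preserves $Y$, the construction is $G$-equivariant, with $[M_n/G]$ the quotient stack of this Rees quotient.

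The key step is to transfer the identification. The compatible isomorphisms $\cX_m \simeq [X_m/G]$ from Lemma~\ref{lemma4} match $\cY$ with $[Y/G]$, and therefore identify, for every $m$, the ideal of $\cY$ in $\cX_m$ with the $G$\dash equivariant ideal of $Y$ in $X_m$. In particular, every graded piece $I_\cY^{j}/I_\cY^{n+j+1}$, which is determined by $\cX_{n+j}$ together with its ideal of $\cY$, is identified $G$-equivariantly with $I_Y^{j}/I_Y^{n+j+1}$. Assembling these identifications over all $j \ge 0$ yields a $G$-equivariant isomorphism of Rees quotients, and taking quotient stacks gives the desired $\cM_n \simeq [M_n/G]$.

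Compatibility with the inclusions $\cX_n \subset \cX_{n+1}$ is automatic from the naturality of the Rees quotient in the truncation parameter; compatibility with the identity on $\AA^1_k = \spec k[t]$ is built in, since $k[t]$ sits in the non-negative $t$-degree part of $\mathcal{R}$ and the identifications preserve the $t$-grading. The main technical point is to globalize the smooth-local Rees description to the stack $\cX$. This is handled by the fact that the deformation to the normal bundle commutes with smooth base change; equivalently, one may pull back to the smooth atlas $U \arr \cX$ from Lemma~\ref{lemma4}, use that $\widehat U \simeq \widehat X$ exhibits the formal neighborhood of $u_{0}$ as a formal $G$-torsor over $\widehat\cX_\cY$, perform the identification on $U$, and descend to $\cX$.
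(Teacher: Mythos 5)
The proposal rests on a misreading of what $\cM_n$ is. In the paper's setup $\cX_n$ is the $n$th infinitesimal neighborhood of the residual gerbe $\cB_k G = \cG_{y_0}$ inside $\cX$ --- a neighborhood of a \emph{point}, not of $\cY$ --- and $\cM_n$ is the preimage of $\cX_n \times \AA^1_k$ in $\cM$. Your identification of $\cM_n$ with the relative spectrum of $\cR/I_\cY^{n+1}\cR$, with $\cR$ the extended Rees algebra of $I_\cY$, therefore computes a different object, and its graded pieces bear no relation to $\cM_n$. In particular the assertion that ``every graded piece $I_\cY^{j}/I_\cY^{n+j+1}$ is determined by $\cX_{n+j}$ together with its ideal of $\cY$'' is doubly off: on an atlas $U$ the graded pieces of the Rees data defining $\cM_n$ have the form $I_U^m \otimes_{\cO_{U\times\AA^1}} \cO_{U_n\times\AA^1}$ --- a tensor product with the structure sheaf of the pointed thickening $U_n$, not a quotient by a higher power of the same ideal --- and $\cX_{n+j}$ is the neighborhood of $y_0$, not of $\cY$.

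This is where the actual difficulty sits, and the proposal elides it. The isomorphisms $\phi_n\colon X_n \simeq U_n$ exist only at the level of infinitesimal neighborhoods of a point, and (as the paper notes in the proof of Lemma~\ref{lemma4}) the limit $\widehat{U}$ carries no $G$-action as an algebraic group --- so your final sentence, which invokes ``$\widehat{U}\simeq\widehat{X}$ as a formal $G$-torsor'' and proposes to ``descend to $\cX$,'' cannot be run as stated. The paper instead works with groupoid presentations: it forms the formal completions $\widetilde{U}\to U\times\AA^{1}$, $\widetilde{X}\to X\times\AA^{1}$ and their groupoid companions $\widetilde{R}$, $\widetilde{S}$, and uses \emph{flatness} of these completion morphisms to obtain $\widetilde{u}^{*}I_U^{m}\simeq \widetilde{\phi}^{*}\widetilde{x}^{*}I_X^{m}$ (flatness is precisely what makes ideal powers pull back correctly); restricting back to $U_n$ and $R_n$ then produces compatible isomorphisms of the truncated Rees algebras and hence of the presentations $R'_{n}\setminus R''_{n}\double U'_{n}\setminus U''_{n}$ of $\cM_n$ and $[M_n/G]$. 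Your proof never produces these identifications of ideal powers over $U_n$, so the concluding step of ``assembling the identifications over all $j$'' has nothing to assemble.
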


Let us assume the claim. We know that $\cI_{[M/G]}$ is étale over $\cM$ at a
general $k$-rational point of $\cI_{[M/G]}$ lying over the image of the origin
in $[N/G] \subseteq [M/G]$; let $v\in \cI_{\cM}(k)$ be a general $k$-rational
point lying over $\cN$, and let us show that $\cI_{\cM} \arr \cM$ is étale at
$v$. For this we use the infinitesimal criterion for étaleness. Let $A$ be a
finite $k$-algebra with residue field equal to $k$, let $I$ be a proper ideal in
$A$, and consider a commutative diagram \[ \xymatrix{ \spec (A/I) \ar[r]\ar[d]&
\cI_{\cM}\ar[d]\\ \spec A \ar[r]\ar@{-->}[ur] & \cM } \] in which composite
$\spec k \subseteq \spec A \arr \cI_{\cM}$ is isomorphic to $v$; we need to show
that we can fill in the dashed arrow in a unique way. For $n \gg 0$, the
morphism $\spec A \arr \cM$ factor through $\cM_{n}$. Since $\cI_{\cM_{n}} =
\cM_{n} \times_{\cM}\cI_{\cM}$, the square above factors through a square \[
\xymatrix{ \spec (A/I) \ar[r]\ar[d]& \cI_{\cM_{n}}\ar[d]\\ \spec A
\ar[r]\ar@{-->}[ur] & \cM_{n} } \] in which again we have to show the existence
and uniqueness of the lifting. However, the isomorphism $\cM_{n} \simeq
[M_{n}/G]$ induces an isomorphism of the morphism $\cI_{\cM_{n}} \arr \cM_{n}$
with $\cI_{[M_{n}/G]} \arr [M_{n}/G]$; we know that $\cI_{[M_{n}/G]} = [M_{n}/G]
\times_{[M/G]}\cI_{[M/G]}$ is étale at the point corresponding to $v$. Hence the
lifting exists and is unique.

Now let us prove the claim. Set \begin{align*} R &\eqdef U \times_{\cX} U,\\
R_{n} &\eqdef U_{n} \times_{\cX_{n}} U_{n},\\ S &\eqdef X \times_{[X/G]} X = G
\times_{\spec k}X,\\ S_{n} &\eqdef X_{n} \times_{[X_{n}/G]} X_{n} = G
\times_{\spec k}X_{n}\,. \end{align*} The compatible isomorphisms
$\phi_{n}\colon X_{n} \simeq U_{n}$ and $\cX_{n} \simeq [X_{n}/G]$ yield yield
isomorphisms of schemes in groupoids of $R_{n} \double U_{n}$ with $S_{n}
\double X_{n}$, for each $n \geq 0$.

Denote by $I_{U}$ the sheaf of ideals of the inverse image of $\cY \times \{0\}
\subseteq \cX\times \AA^{1}$ in $U\times\AA^{1}$, and by $I_{R}$ the sheaf of
ideals of its inverse image in $R\times\AA^{1}$. Also denote with $J_{U}$ the
sheaf of ideals of $\cY \times \{0\} \subseteq \cX \times \{0\}$ in $U \times
\{0\}$, pushed forward to $U\times\AA^{1}$, and by $J_{R}$ the sheaf of ideals
of its inverse image in $R \times \{0\}$, pushed forward to $R\times\AA^{1}$.
There are natural surjection $I_{U} \twoheadrightarrow J_{U}$ and $I_{R}
\twoheadrightarrow J_{R}$. Set \begin{alignat*}3 U' &\eqdef
\proj_{U\times\AA^{1}} \Bigl(\,\bigoplus_{m = 0}^{\infty}I_{U}^{m}\Bigr), \quad
R' &\eqdef \proj_{R\times\AA^{1}} \Bigl(\,\bigoplus_{m =
0}^{\infty}I_{R}^{m}\Bigr),\\ U'' &\eqdef \proj_{U\times\AA^{1}}
\Bigl(\,\bigoplus_{m = 0}^{\infty}J_{U}^{m}\Bigr), \quad R''\,&\eqdef
\proj_{R\times\AA^{1}} \Bigl(\,\bigoplus_{m = 0}^{\infty}J_{R}^{m}\Bigr).\\
\end{alignat*}

Then $R' \double U'$ is a scheme in groupoids, $R'' \double U''$ is a closed
subgroupoid, and the difference groupoid $R' \setminus R'' \double U' \setminus
U''$ gives a smooth presentation of $\cM$. Let us denote by $U'_{n}$ and
$U''_{n}$ the inverse images of $U_{n}$ in $U'$ and $U''$, and by $R'_{n}$ and
$R''_{n}$ the inverse images of $U_{n}\times U_{n}$ in $R'$ and $R''$. Then the
groupoid $R'_{n} \setminus R''_{n} \double U'_{n} \setminus U''_{n}$ gives a
smooth presentation of $\cM_{n}$; furthermore, we have \begin{align*} U'_{n} &=
\proj_{U_{n}\times\AA^{1}} \Bigl(\,\bigoplus_{m = 0}^{\infty}I_{U}^{m}
\otimes_{\cO_{U\times\AA^{1}}}\cO_{U_{n}\times \AA^{1}} \Bigr)\,,\\ \quad R'_{n}
&= \proj_{R_{n}\times\AA^{1}} \Bigl(\,\bigoplus_{m = 0}^{\infty}I_{R}^{m}
\otimes_{\cO_{U\times\AA^{1}}}\cO_{R_{n}\times \AA^{1}} \Bigr)\,,\\ U''_{n} &=
\proj_{U_{n}\times\AA^{1}} \Bigl(\,\bigoplus_{m = 0}^{\infty}J_{U}^{m}
\otimes_{\cO_{R\times\AA^{1}}}\cO_{U_{n}\times \AA^{1}}\Bigr)\,,\\ \quad
R''_{n}\,&= \proj_{R_{n}\times\AA^{1}} \Bigl(\,\bigoplus_{m =
0}^{\infty}J_{R}^{m} \otimes_{\cO_{R\times\AA^{1}}}\cO_{R_{n}\times
\AA^{1}}\Bigr).\\ \end{align*}

In a completely analogous manner, denote by $I_{X}$ the sheaf of ideals of $Y
\times\{0\}$ in $U\times\AA^{1}$, and by $I_{S}$ the sheaf of ideals of its
inverse image  in $S\times\AA^{1}$. Also denote with $J_{X}$ the sheaf of ideals
of $Y \times \{0\}$ in $X \times \{0\}$, pushed forward to $X\times\AA^{1}$, and
by $J_{S}$ the sheaf of ideals of its inverse image in $S \times \{0\}$, pushed
forward to $S\times\AA^{1}$. Set \begin{align*} X'_{n} &\eqdef
\proj_{X_{n}\times\AA^{1}} \Bigl(\,\bigoplus_{m = 0}^{\infty}I_{X}^{m}
\otimes_{\cO_{X\times\AA^{1}}}\cO_{X_{n}\times \AA^{1}} \Bigr)\,,\\ \quad S'_{n}
&\eqdef \proj_{S_{n}\times\AA^{1}} \Bigl(\,\bigoplus_{m = 0}^{\infty}I_{S}^{m}
\otimes_{\cO_{S\times\AA^{1}}}\cO_{S_{n}\times \AA^{1}} \Bigr)\,,\\ X''_{n}
&\eqdef \proj_{U\times\AA^{1}} \Bigl(\,\bigoplus_{m = 0}^{\infty}J_{U}^{m}
\otimes_{\cO_{R\times\AA^{1}}}\cO_{R_{n}\times \AA^{1}}\Bigr)\,,\\ \quad
S''_{n}\,&\eqdef \proj_{R\times\AA^{1}} \Bigl(\,\bigoplus_{m =
0}^{\infty}J_{R}^{m} \otimes_{\cO_{R\times\AA^{1}}}\cO_{R_{n}\times
\AA^{1}}\Bigr).\\ \end{align*} By the same argument as before, we see that
$[M_{n}/G]$ has a smooth presentation $S'_{n} \setminus S''_{n} \double X'_{n}
\setminus X''_{n}$; hence, to complete the proof we need to establish the
existence of isomorphisms $U'_{n} \simeq X'_{n}$ and $R'_{n} \simeq S'_{n}$,
compatible with the groupoid structures, the isomorphisms $\phi_{n}\colon U_{n}
\simeq X_{n}$ and $\psi_{n}\colon R_{n} \simeq S_{n}$, and the embeddings
$?_{n-1} \subseteq ?_{n}$.

Let us denote by $\widetilde{R}$ and $\widetilde{U}$ the formal schemes obtained
by completing $R\times\AA^{1}$ and $U\times\AA^{1}$ respectively along the
inverse images of $u_{0} \in U$, and by $\widetilde{S}$ and $\widetilde{X}$ the
formal schemes obtained by completing $S\times\AA^{1}$ and $X\times\AA^{1}$
respectively along the inverse images of the origin in $X$. The structure maps
of the schemes in groupoids $R_{n} \double U_{n}$ and $S_{n} \double X_{n}$ pass
to the limit, yielding formal schemes in groupoids $\widetilde{R} \double
\widetilde{U}$ and $\widetilde{S} \double \widetilde{X}$. The isomorphisms
$\phi_{n}\colon U_{n} \simeq X_{n}$ and $\psi_{n}\colon R_{n} \simeq S_{n}$ give
isomorphisms of formal schemes $\widetilde{\phi}\colon \widetilde{U} \simeq
\widetilde{X}$ and $\widetilde{\psi}\colon \widetilde{R} \simeq \widetilde{S}$,
yielding an isomorphism of formal schemes in groupoids of $\widetilde{R} \double
\widetilde{U}$ with $\widetilde{S} \double \widetilde{X}$.

Denote by $I_{\widetilde{U}}$ and $I_{\widetilde{R}}$ the sheaves of ideals of
the inverse images of $\cY\times\{0\} \subseteq\cX \times \AA^{1}$ in
$\widetilde{U}$ and $\widetilde{R}$ respectively, and by $J_{\widetilde{U}}$ and
$J_{\widetilde{R}}$ the pushforwards to $\emph{U}$ and $\widetilde{R}$ of the
sheaves of ideals of the pullbacks of  the inverse images of $\cY\times\{0\}$ in
the inverse images of $\cX \times \{0\}$. Analogously, denote by
$I_{\widetilde{X}}$ and $I_{\widetilde{S}}$ the sheaves of ideals of the inverse
images of $[Y/G]\times\{0\} \subseteq [X/G] \times \AA^{1}$ in $\widetilde{X}$
and $\widetilde{S}$ respectively, and by $J_{\widetilde{X}}$ and
$J_{\widetilde{S}}$ the pushforwards to $\widetilde{X}$ and $\widetilde{S}$ of
the sheaves of ideals of the pullbacks of  the inverse images of
$[Y/G]\times\{0\}$ in the inverse images of $[X/G] \times \{0\}$.

The natural morphisms $\widetilde{u}\colon \widetilde{U} \arr U\times\AA^{1}$,
$\widetilde{r}\colon \widetilde{R} \arr R\times \AA^{1}$, $\widetilde{x}\colon
\widetilde{X} \arr X\times\AA^{1}$ and $\widetilde{s}\colon \widetilde{S} \arr
S\times \AA^{1}$ are flat (this is actually the key point of this part of the
proof). Furthermore, the inverse images of $\cY\times\{0\} \subseteq\cX \times
\AA^{1}$ and of $\cY\times\{0\} \subseteq\cX \times \AA^{1}$ in $\widetilde{U}$
and $\widetilde{R}$ and the inverse images of $[Y/G]\times\{0\} \subseteq [X/G]
\times \AA^{1}$ and of $[X/G]\times\{0\} \subseteq [X/G] \times \AA^{1}$ in
$\widetilde{U}$ and $\widetilde{R}$ correspond under $\widetilde{\phi}$ and
$\widetilde{\psi}$; hence for each $m \geq 0$ we obtain canonical isomorphisms
of coherent sheaves \[ \widetilde{u}^{*}I_{U}^{m} \simeq
\widetilde{\phi}^{*}\widetilde{x}^{*}I_{X}^{m}, \quad\widetilde{r}^{*}I_{R}^{m}
\simeq \widetilde{\psi}^{*}\widetilde{S}^{*}I_{S}^{m}, \] \[
\widetilde{u}^{*}J_{U}^{m} \simeq
\widetilde{\phi}^{*}\widetilde{x}^{*}J_{X}^{m}, \quad\widetilde{r}^{*}J_{R}^{m}
\simeq \widetilde{\psi}^{*}\widetilde{S}^{*}J_{S}^{m}. \] By restricting to
$U_{n}$ and $R_{n}$ we obtain isomorphism of coherent sheaves \begin{align*}
I_{U}^{m}\otimes_{\cO_{U\times\AA^{1}}}\cO_{U_{n}\times \AA^{1}} &\simeq
\phi_{n}^{*}\bigl(I_{X}^{m} \otimes_{\cO_{X\times\AA^{1}}}\cO_{X_{n}\times
\AA^{1}}\bigr),\\ I_{R}^{m}\otimes_{\cO_{R\times\AA^{1}}}\cO_{R_{n}\times
\AA^{1}} &\simeq \phi_{n}^{*}\bigl(I_{S}^{m}
\otimes_{\cO_{X\times\AA^{1}}}\cO_{S_{n}\times \AA^{1}}\bigr),\\
J_{U}^{m}\otimes_{\cO_{U\times\AA^{1}}}\cO_{U_{n}\times \AA^{1}} &\simeq
\phi_{n}^{*}\bigl(J_{X}^{m} \otimes_{\cO_{X\times\AA^{1}}}\cO_{X_{n}\times
\AA^{1}}\bigr),\\ J_{R}^{m}\otimes_{\cO_{R\times\AA^{1}}}\cO_{R_{n}\times
\AA^{1}} &\simeq \phi_{n}^{*}\bigl(J_{S}^{m}
\otimes_{\cO_{X\times\AA^{1}}}\cO_{S_{n}\times \AA^{1}}\bigr). \end{align*} By
summing up over all $m$ we obtain isomorphism of the corresponding Rees
algebras, which yield the desired isomorphisms  $U'_{n} \simeq X'_{n}$ and
$R'_{n} \simeq S'_{n}$.

This ends the proof of Lemma~\ref{lemma3}, and of the Theorem. \end{proof}

On the basis of examples, the following generalization of 
Theorem~\ref{thm:genericity} seems plausible.

\begin{conjecture} \label{conjecture.non-reductive}
Let $\cX$ be an amenable stack over $k$. Let $L$ be an
extension of $k$, and let $\xi$ be an object of $\cX(\spec L)$. 
Then $\ed_{k}\xi \leq \ged_{k} \cX + \dim\ur(\underaut_{L}\xi)$. 
\end{conjecture}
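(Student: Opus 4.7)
The plan is to prove Conjecture~\ref{conjecture.non-reductive} by induction on the invariant $u \eqdef \dim \ur(\underaut_{L}\xi)$. The base case $u = 0$ is precisely Theorem~\ref{thm:genericity}. Assume $u \geq 1$, and write the Levi decomposition $G \eqdef \underaut_{L}\xi = U \rtimes H$, possible since $\Char k = 0$, with $\dim U = u$ and $H$ reductive. The strategy is to adapt the proof of Theorem~\ref{thm:genericity} through the deformation to the normal bundle, extending Lemma~\ref{lemma3} to a quantitative statement that measures the failure of the reductive slice picture.

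Following the setup of Section~\ref{sect.genericity}, let $\cY$ be the closure of the image of $\xi$, and let $\pi\colon \cM \arr \AA^{1}_{k}$ be the deformation of $\cY \subseteq \cX$ to its normal bundle $\cN$. The key geometric step in Lemma~\ref{lemma3} uses the $G$-equivariant splitting of the formal slice $\widehat{X} = \widehat{Y} \oplus \widehat{Y}'$, which holds because $G$ is assumed reductive. In the non-reductive case, the relevant slice construction in Lemma~\ref{lemma4} produces only an $H$-equivariant splitting $\widehat{X} = \widehat{Y} \oplus \widehat{Z}$; the $U$-action twists this decomposition. My claim is that the resulting failure of $\cM^{0} \cap \cN$ to be non-empty is controlled by the codimension of the non-reductive stratum of the inertia stack of $\cM$ along $\cN$, and that this codimension is at most $u$.

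Granting this, the plan is to produce an object $\eta \in \cM^{0}(L_{1})$, defined over an extension $L_{1}$ of $L$, that is related to $\xi \in \cN(L)$ by a chain of specializations along a DVR of length at most $u$, together with one additional specialization coming from $\cN \subseteq \cM$. Each such specialization preserves essential dimension, while Theorem~\ref{thm:genericity-gerbes} applied to the gerbe-like stack $\cM^{0}$ gives the bound
\[
\ed_{k}\eta \leq \ed_{k(\bM)}\cM^{0}_{k(\bM)} + \dim\bM - \codim_{\cM^{0}} \eta\,.
\]
Combining this with $\ed_{k(\bM)}\cM^{0}_{k(\bM)} = \ed_{k(\bX)(t)}\cX_{k(\bX)(t)} \leq \ed_{k(\bX)}\cX_{k(\bX)}$ (via \cite[Proposition~2.8]{brv-jems}) and the estimate $\codim_{\cM^{0}}\eta \geq 1$ (from $\eta$ lying over $\cN$), and absorbing the extra $u$ from the codimension defect, yields $\ed_{k}\xi \leq \ged_{k}\cX + u$, completing the induction.

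The main obstacle is the codimension estimate for the non-gerbe-like locus of $\cM$ along $\cN$. Concretely, one must show that in the formal slice model, the subscheme of $\widehat{Z}$ where the stabilizer has unipotent radical of dimension $u$ has codimension at least $u$ (equivalently, the $U$-orbit of $\xi$ inside $[\widehat{X}/G]$ has generic dimension at least $u$ relative to $\widehat{Z}$). Since $\cX$ is amenable, the generic stabilizer on $\widehat{X}$ is finite, so generic $U$-orbits are faithful; the task is to refine this into a stratum-by-stratum codimension bound using the Levi decomposition and the structure of unipotent actions on vector spaces (cf.~Lemma~\ref{lem.special}). A secondary difficulty, flagged in the remarks following Theorem~\ref{thm:genericity}, is that the locus with reductive stabilizer is only constructible, not open; consequently the specialization chain must be constructed within a constructible stratification of $\cN$, rather than working on a single open substack, and the Artin approximation step used in Lemma~\ref{lemma4} must be applied uniformly across these strata.
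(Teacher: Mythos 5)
This statement is posed in the paper as an open \emph{conjecture}, not a theorem: the authors have no proof of it, and explicitly remark immediately afterwards that ``the approach used in this section breaks down in the more general setting of the above conjecture: if the stabilizer is not reductive, the slice theorem does not apply.'' So there is no proof in the paper against which to compare yours; you should not expect your sketch to ``match'' anything.

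As for the proposal itself, it is a plan, not a proof, and it contains a genuine gap that is exactly the one the authors flag. The entire induction hinges on a codimension estimate that you state as ``my claim'' and then use under ``granting this'': that in the formal slice model of Lemma~\ref{lemma4}, the locus in $\widehat{Z}$ (or along $\cN$) where the stabilizer has unipotent radical of dimension $u$ has codimension at least $u$. You offer no argument for this; amenability of $\cX$ only says the generic stabilizer on $\widehat{X}$ is finite, which gives no stratumwise codimension bound for the non-reductive locus. Moreover, once $G$ is not reductive there is no $G$-equivariant complement $Y'$ to $Y$ in the slice, so the central geometric input behind Lemma~\ref{lemma3} --- the identification of the deformation to the normal bundle with $X\times\AA^1$ and the resulting clean description of $\cM^0\cap\cN$ --- simply does not exist; any quantitative ``defect'' version must be built from scratch, and you do not do so. The final assembly also skips a step: you cannot apply Theorem~\ref{thm:genericity-gerbes} to $\xi$ (it is not in $\cM^0$), and your ``chain of specializations'' claim that each specialization preserves essential dimension is precisely the nontrivial assertion that the gerbe-like machinery is designed to justify --- and it is only justified there under the gerbe-like hypothesis, which fails on the non-reductive stratum. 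Finally, the induction never actually invokes the inductive hypothesis for $u-1$; it just asserts the $u=0$ case plus an unproven correction term. The conjecture remains open, and your outline, while correctly identifying where the reductive hypothesis enters, does not close the gap.
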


Here $\ur G$ denotes the unipotent radical of $G$, 
as in Section~\ref{sect.special}.  Unfortunately, 
the approach used in this section breaks down in 
the more general setting of the above conjecture:
if the stabilizer is not reductive, the slice theorem 
does not apply.

\section{Essential dimension of $\GL_{n}$-quotients} \label{sect.non-reductive}

Suppose that $G$ is a special affine algebraic group over $k$ acting on a
scheme $X$ locally of finite type over $k$. For each field $L/k$ we have an
equivalence between $[X/G](L)$ and the quotient category for the action of the
discrete group $G(L)$ on the set $X(L)$; hence the essential dimension of
$[X/G]$ equals the essential dimension of the \emph{functor of orbits} \[
\orb_{G,X}\colon \field{k} \arr \catset \] from the category $\field k$ of
extensions of $k$ to the category of sets, sending $L$ to the set of orbits
$\orb_{G,X}(L) \eqdef X(L)/G(L)$; see~\cite[Example 2.6]{brv-jems}.

For the rest of this section we will assume that $X$ is an integral scheme,
locally of finite type and smooth over $k$, and $\GL_{n}$ acts on $X$ with
generically finite stabilizers. Then the quotient stack $[X/\GL_n]$ is amenable;
however the Genericity Theorem~\ref{thm:genericity} does not tell us that
$\ed_{k}[X/\GL_n] = \ged_{k}[X/\GL_n]$ because we are not assuming that the
stabilizer of every point of $X$ is reductive.  Nevertheless, in some cases one
can still establish this equality by estimating $\ed_{k}\xi$ from above and
proving, in an ad-hoc fashion, that $\ed_{k}(\xi) \le \ged_{k}[X/\GL_n]$ for
every $\xi \in [X/\GL_n](L)$ whose automorphism group is not reductive.  The
rest of this section will be devoted to such estimates. These estimates will
ultimately allow us to deduce Theorem~\ref{thm.main} from
Proposition~\ref{prop.generic-ed}.

For each positive integer $\lambda$, denote by $J_{\lambda}$ the $\lambda \times
\lambda$ Jordan block with eigenvalue~$0$, that is, the  $\lambda \times
\lambda$ matrix, that is, the linear transformation $k^{\lambda} \arr
k^{\lambda}$ defined by $e_{1} \arrto 0$ and $e_{i} \arrto e_{i-1}$ for $i = 2,
\dots, \lambda$, where $e_{1}$, \dots,~$e_{\lambda}$ is the canonical basis of
$k^{\lambda}$. Let $\underline{\lambda}$ be a partition of $n$; that is,
$\underline\lambda = (\lambda_{1}, \dots, \lambda_{r})$ is a non-increasing
sequence of positive integers with $\lambda_{1} + \dots + \lambda_{r} = n$. We
denote by $A_{\underline\lambda}$ the $n\times n$ nilpotent matrix which is
written in block form as \[ A_{\underline\lambda} \eqdef \begin{pmatrix}
J_{\lambda_{1}} & 0 & \ldots & 0\\ 0 & J_{\lambda_{2}} & \ldots & 0\\ \vdots &
\vdots & \ddots & \vdots\\ 0 & 0 & \ldots & J_{\lambda_{r}} \end{pmatrix} \]
Every nilpotent $n \times n$ matrix is conjugate to a unique
$A_{\underline\lambda}$. Consider the 1-parameter subgroup
$\omega_{\underline\lambda}\colon \ga \arr \GL_{n}$ defined by
$\omega_{\underline\lambda}(t) = \exp(tA_{\underline\lambda})$. We will usually
assume that $\underline\lambda \neq (1^{n})$; under this assumption
$\omega_{\underline\lambda}$ is injective. Denote by $N_{\underline\lambda}$ the
normalizer of the image of $\omega_{\underline\lambda}$; the group
$N_{\underline\lambda}$ acts on the fixed point locus
$X^{\omega_{\underline\lambda}}$.

\begin{lemma}\label{lem:inequality-a} Let $L$ be a field extension of $k$, and
$\xi$ be an object of $[X/\GL_{n}](L)$ whose automorphism group scheme
$\underaut_{L}\xi$ is not reductive. Then \[ \ed_{k}\xi \le
\max_{\underline\lambda}
\ed_{k}[X^{\omega_{\underline\lambda}}/N_{\underline\lambda}] \, , \] where the
maximum is taken over all partitions $\underline\lambda$ of $n$ different from
$(1^{n})$. \end{lemma}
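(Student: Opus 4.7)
The plan is to use that $\GL_n$ is special to represent $\xi$ by an $L$-point of $X$, extract a non-zero nilpotent element from the Lie algebra of its (non-reductive) stabilizer, and apply Jordan canonical form over $L$ to move this $L$-point into one of the standard fixed loci $X^{\omega_{\underline{\lambda}}}$. The bound then follows from the functoriality of essential dimension, using the specialness of $N_{\underline{\lambda}}$ from Lemma~\ref{lem.special}(b).

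\textbf{Step 1: reduction to an $L$-point of $X$.} Since $\GL_n$ is special by Hilbert~90, the groupoid $[X/\GL_n](L)$ is equivalent to the action groupoid of $\GL_n(L)$ on $X(L)$. I will choose a representative $x \in X(L)$ of $\xi$, so that $\underaut_L \xi$ is identified with the stabilizer $\Stab_{\GL_{n,L}}(x)$.

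\textbf{Step 2: extracting the partition.} Because $\Char k = 0$, the field $L$ is perfect, and the unipotent radical $\ur\Stab_{\GL_{n,L}}(x)$ is a smooth closed subgroup scheme defined over $L$. By the non-reductivity hypothesis it is non-trivial, so its Lie algebra is a non-zero $L$-subspace of $\mathfrak{gl}_n(L)$ consisting of nilpotent matrices; pick any non-zero $A$ in it. Since nilpotent matrices have characteristic polynomial $x^n$, which splits over every field, Jordan canonical form holds over $L$: there exist $g \in \GL_n(L)$ and a unique partition $\underline{\lambda} \neq (1^n)$ of $n$ with $g A g^{-1} = A_{\underline{\lambda}}$. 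Replacing $x$ by $g \cdot x$, which gives an isomorphic object of $[X/\GL_n](L)$, I may assume $A = A_{\underline{\lambda}}$, so that the image of $\omega_{\underline{\lambda}}$ lies in $\Stab_{\GL_{n,L}}(x)$, i.e., $x \in X^{\omega_{\underline{\lambda}}}(L)$.

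\textbf{Step 3: passing to the smaller stack.} The $N_{\underline{\lambda}}$-equivariant inclusion $X^{\omega_{\underline{\lambda}}} \hookrightarrow X$ induces a morphism of stacks $\iota\colon [X^{\omega_{\underline{\lambda}}}/N_{\underline{\lambda}}] \to [X/\GL_n]$. By Lemma~\ref{lem.special}(b) the group $N_{\underline{\lambda}}$ is special, hence the $L$-point $x$ defines an object $\eta \in [X^{\omega_{\underline{\lambda}}}/N_{\underline{\lambda}}](L)$ with $\iota(\eta) \simeq \xi$. Functoriality of essential dimension yields $\ed_k \xi \le \ed_k \eta \le \ed_k [X^{\omega_{\underline{\lambda}}}/N_{\underline{\lambda}}]$, and taking the supremum over all partitions $\underline{\lambda} \neq (1^n)$ that can arise in this way gives the stated inequality.

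\textbf{Main obstacle.} The substantive (as opposed to formal) content is concentrated in Step~2: one must check that the unipotent radical of $\Stab_{\GL_{n,L}}(x)$ is defined over $L$ (which uses $\Char k = 0$) and that the conjugation bringing $A$ into Jordan form can be realized by an element of $\GL_n(L)$, not merely of $\GL_n(\overline{L})$. Both are standard facts, and once they are in place the rest is the formal machinery of quotient stacks combined with Lemma~\ref{lem.special}(b).
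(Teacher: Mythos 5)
Your proposal is correct and follows essentially the same route as the paper: represent $\xi$ by an $L$-point $p$ of $X$ (using specialness of $\GL_n$), use non-reductivity of $\Stab_{\GL_n}(p)$ to find a copy of $\ga$ in the stabilizer that is $\GL_n(L)$-conjugate to some $\omega_{\underline\lambda}$ with $\underline\lambda \ne (1^n)$, translate $p$ into $X^{\omega_{\underline\lambda}}(L)$, and conclude by the factorization $X^{\omega_{\underline\lambda}} \to [X^{\omega_{\underline\lambda}}/N_{\underline\lambda}] \to [X/\GL_n]$. One small correction: in Step~3 the specialness of $N_{\underline\lambda}$ from Lemma~\ref{lem.special}(b) is not actually needed to produce $\eta$ with $\iota(\eta)\simeq\xi$ — the composite $\Spec L \to X^{\omega_{\underline\lambda}} \to [X^{\omega_{\underline\lambda}}/N_{\underline\lambda}]$ already does that with no cohomological input; specialness of $N_{\underline\lambda}$ is deferred to Lemma~\ref{lem:inequality-b} where one bounds $\ed_k[X^{\omega_{\underline\lambda}}/N_{\underline\lambda}]$ by $\dim X^{\omega_{\underline\lambda}}$.
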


\begin{proof} Suppose $\xi$ corresponds to the $\GL_n$-orbit of a point $p \in
X(L)$. The automorphism group scheme $\underaut_{L}\xi$ is isomorphic to the
stabilizer $G_{p}$ of $p$ in $\GL_n$. Since we are assuming that this group is
not reductive, $G_{p}$ will contain a copy of $\ga$, which is conjugate to the
image of $\omega_{\underline\lambda}$ for some $\underline\lambda \neq (1^{n})$.
After changing $p$ to a suitable $\GL_n$-translate, we may assume that the image
of $\omega_{\underline\lambda}$ is contained in $G_{p}$; hence $p \in
X^{\omega_{\underline\lambda}}$. The composite $X^{\omega_{\underline\lambda}}
\into X \arr [X/\GL_{n}]$ factors through $[X^{\omega_{\underline\lambda}}/
N_{\underline\lambda}]$; hence $\xi$ is in the essential image of
$[X^{\omega_{\underline\lambda}}/ N_{\underline\lambda}](L)$ in
$[X/\GL_{n}](L)$, and $\ed_{k}\xi \leq
\ed_{k}[X^{\omega_{\underline\lambda}}/N_{\underline\lambda}]$. \end{proof}

\begin{lemma} \label{lem:inequality-b}
$\ed_{k}[X^{\omega_{\underline\lambda}}/N_{\underline\lambda}] \leq  \dim
X^{\omega_{\underline\lambda}}$ for any $\underline\lambda \neq (1^{n})$.
\end{lemma}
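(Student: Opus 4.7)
The plan is to combine two observations: that the normalizer $N_{\underline\lambda}$ is a \emph{special} algebraic group, and that the essential dimension of the orbit functor of any group action on a scheme $Y$ is bounded above by $\dim Y$.

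First I would apply Lemma~\ref{lem.special}(b) with $A = A_{\underline\lambda}$: it tells us that $N_{\underline\lambda}$, the normalizer of the image of $\omega_{\underline\lambda}$ in $\GL_{n}$, is special. As recalled at the beginning of Section~\ref{sect.non-reductive}, whenever a special group $G$ acts on a scheme $Y$ locally of finite type over $k$, the stack $[Y/G]$ has the same essential dimension as the functor of orbits $\orb_{G,Y}\colon \field{k}\arr\catset$, sending $L$ to $Y(L)/G(L)$. Applying this to the action of $N_{\underline\lambda}$ on $X^{\omega_{\underline\lambda}}$ reduces the problem to bounding the essential dimension of $\orb_{N_{\underline\lambda},X^{\omega_{\underline\lambda}}}$.

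Next I would bound the essential dimension of this orbit functor by $\dim X^{\omega_{\underline\lambda}}$ in the standard way. Let $L/k$ be a field extension and let $p\in X^{\omega_{\underline\lambda}}(L)$ represent an element of $\orb_{N_{\underline\lambda},X^{\omega_{\underline\lambda}}}(L)$. The morphism $p\colon\spec L\arr X^{\omega_{\underline\lambda}}$ has image a (not necessarily closed) scheme-theoretic point of $X^{\omega_{\underline\lambda}}$ with residue field $\kappa(p)$ satisfying $\trdeg_{k}\kappa(p)\leq\dim X^{\omega_{\underline\lambda}}$, and $p$ factors through $\spec\kappa(p)\arr X^{\omega_{\underline\lambda}}$. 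Composing with $X^{\omega_{\underline\lambda}}\arr [X^{\omega_{\underline\lambda}}/N_{\underline\lambda}]$ exhibits the orbit of $p$ as descending to $\kappa(p)$, which gives the desired bound.

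I do not expect any serious obstacle: once Lemma~\ref{lem.special}(b) is invoked, the rest is the elementary descent argument that bounds $\ed_{k}[Y/G]$ by $\dim Y$ for $G$ special. The only conceptual ingredient is that specialness of $N_{\underline\lambda}$ reduces the quotient stack to its underlying orbit functor, which is exactly the content of the first paragraph of this section.
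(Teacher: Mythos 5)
Your proof follows exactly the same route as the paper: invoke Lemma~\ref{lem.special}(b) to see that $N_{\underline\lambda}$ is special, identify $\ed_{k}[X^{\omega_{\underline\lambda}}/N_{\underline\lambda}]$ with $\ed_{k}\orb_{N_{\underline\lambda},X^{\omega_{\underline\lambda}}}$, and bound the latter by $\dim X^{\omega_{\underline\lambda}}$. The only difference is that you spell out the elementary descent-to-the-residue-field argument for the last inequality, which the paper leaves implicit.
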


\begin{proof} By Lemma~\ref{lem.special}, $N_{\underline\lambda}$ is special.
Hence, \[ \ed_{k}[X^{\omega_{\underline\lambda}}/N_{\underline\lambda}] = \ed_k
\Orb_{N_{\underline\lambda}, X^{\omega_{\underline\lambda}}} \le \dim
X^{\omega_{\underline\lambda}} \, , \] as claimed. \end{proof}

We now further specialize $X$ to the affine space $A_{n, d}$ of forms 
of degree $d$ in the $n$ variables $\underline{x} = (x_1, \dots, x_n)$ 
over $k$. The general linear group $\GL_{n}$ acts 
on $A_{n, d}$ in the usual way, via $(Af)(\underline{x}) \eqdef
f(\underline{x} \cdot A^{-1})$ for any $A \in \GL_n$. 
We are now ready for the main result of this section.

\begin{theorem} \label{thm:non-reductive} Let $L$ be a field extension of $k$,
and $\xi$ be an object of $[A_{n, d}/\GL_{n}](L)$ whose automorphism group
scheme $\underaut_{L}\xi$ is not reductive. Assume that either $d \ge 4$ and $n
\ge 2$ or $d = 3$ and $n \ge 3$. Then $\ed_{k}\xi \leq \binom{n + d - 1}{d} -
n^{2}$. \end{theorem}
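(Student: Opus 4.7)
The plan is to use Lemma~\ref{lem:inequality-a} to reduce the theorem to showing that
\[
\ed_{k}\bigl[A_{n, d}^{\omega_{\underline\lambda}}/N_{\underline\lambda}\bigr] \;\le\; \binom{n + d - 1}{d} - n^{2}
\]
for every partition $\underline\lambda$ of $n$ with $\underline\lambda \neq (1^{n})$. The crude upper bound $\ed_{k} \le \dim A_{n, d}^{\omega_{\underline\lambda}}$ supplied by Lemma~\ref{lem:inequality-b} is not sufficient — already at $(n, d, \underline\lambda) = (3, 3, (2, 1))$ it gives $4$ whereas the theorem demands $\le 1$ — so the argument must exploit the speciality of $N_{\underline\lambda}$ more fully.

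The key input is that, since $N_{\underline\lambda}$ is special by Lemma~\ref{lem.special}, one has the identity
\[
\ed_{k}\bigl[A_{n, d}^{\omega_{\underline\lambda}}/N_{\underline\lambda}\bigr] \;=\; \ed_{k}\orb_{N_{\underline\lambda}, A_{n, d}^{\omega_{\underline\lambda}}} \;=\; \dim A_{n, d}^{\omega_{\underline\lambda}} \;-\; \dim\bigl(\text{generic $N_{\underline\lambda}$-orbit on } A_{n, d}^{\omega_{\underline\lambda}}\bigr),
\]
so the task is to estimate the generic orbit dimension from below. To do this I introduce the kernel $K_{\underline\lambda} \subseteq N_{\underline\lambda}$ of the induced action on $A_{n, d}^{\omega_{\underline\lambda}}$. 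Clearly $\omega_{\underline\lambda} \subseteq K_{\underline\lambda}$, but in general $K_{\underline\lambda}$ is strictly larger: it also contains the scalars $\mu_{d}$ and the unipotent directions along the "missing" variables that act trivially on the reduced-variable description of $A_{n, d}^{\omega_{\underline\lambda}}$. The quotient $\bar N_{\underline\lambda} = N_{\underline\lambda}/K_{\underline\lambda}$ acts effectively on $A_{n, d}^{\omega_{\underline\lambda}}$, and I would then argue — using that the scalars $\mu_{d}$ are already absorbed into $K_{\underline\lambda}$ and that the residual action is close to a standard action of a parabolic of $\GL_{n - 1}$ on forms — that its generic stabilizer is trivial, so that the generic $N_{\underline\lambda}$-orbit on $A_{n, d}^{\omega_{\underline\lambda}}$ has dimension exactly $\dim N_{\underline\lambda} - \dim K_{\underline\lambda}$.

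With this reduction in hand, the target becomes the purely numerical statement
\[
\dim A_{n, d}^{\omega_{\underline\lambda}} \;-\; \dim N_{\underline\lambda} \;+\; \dim K_{\underline\lambda} \;\le\; \binom{n + d - 1}{d} - n^{2},
\]
to be checked for every $\underline\lambda \neq (1^{n})$ and every $(n, d)$ in the stated range. I expect this uniform verification to be the main obstacle. The extremal case is typically $\underline\lambda = (2, 1^{n - 2})$, where $A_{n, d}^{\omega_{\underline\lambda}} = k[x_{1}, x_{3}, \ldots, x_{n}]_{d}$ has dimension $\binom{n + d - 2}{d}$, a direct block-matrix computation gives $\dim N_{\underline\lambda} = n^{2} - 2n + 3$, and $\dim K_{\underline\lambda} = n$, so the inequality reduces to the Pascal-type bound
\[
\binom{n + d - 2}{d - 1} \;\ge\; 3n - 3,
\]
which is verified directly under the stated hypotheses $d \ge 4, n \ge 2$ or $d = 3, n \ge 3$ — indeed it fails exactly at the three exceptional pairs $(2, 3), (2, 4), (3, 3)$ of the main theorem. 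For the remaining partitions the fixed subspace $A_{n, d}^{\omega_{\underline\lambda}}$ is substantially smaller — by Jacobson–Morozov it equals the number of $\mathfrak{sl}_{2}$-summands in the nilpotent action on $\sym^{d} V^{\vee}$ — and parallel but lighter computations of $\dim N_{\underline\lambda}$ and $\dim K_{\underline\lambda}$ (organised by the shape of $\underline\lambda$) settle each case.
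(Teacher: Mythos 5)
Your reduction via Lemma~\ref{lem:inequality-a} is the right start, and you correctly notice that the crude bound of Lemma~\ref{lem:inequality-b} does not suffice in one case. But the heart of your argument rests on an assertion that has no justification. You claim that, $N_{\underline\lambda}$ being special,
\[
\ed_{k}\bigl[A_{n, d}^{\omega_{\underline\lambda}}/N_{\underline\lambda}\bigr]
\;=\; \ed_{k}\orb_{N_{\underline\lambda},\, A^{\omega_{\underline\lambda}}_{n,d}}
\;=\; \dim A^{\omega_{\underline\lambda}}_{n,d} - \dim\bigl(\text{generic $N_{\underline\lambda}$-orbit}\bigr).
\]
The first equality follows from speciality and already gives the crude bound $\le\dim A^{\omega_{\underline\lambda}}_{n,d}$; that is the content of Lemma~\ref{lem:inequality-b}. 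The second equality (you only use the inequality $\le$) does not follow: bounding $\ed_{k}\orb_{N,A^\omega}$ by the dimension of the birational quotient is a genericity statement for the amenable stack $[A^\omega/N]$, in disguise. A rational slice of $A^\omega\dashrightarrow A^\omega/N$ compresses a \emph{generic} orbit into a field of transcendence degree $\dim A^\omega - \dim(\text{gen.\ orbit})$, but a form whose stabilizer in $N_{\underline\lambda}$ jumps --- and such forms are exactly the ones at issue in this section, since they can again have non-reductive stabilizer --- need not lie over the locus where the slice is defined. You also do not prove that the residual action of $N_{\underline\lambda}/K_{\underline\lambda}$ is generically free, which is a substantive fact on a par with Lemma~\ref{lem.Gamma}; and even granting it, it only controls the generic locus of $A^\omega$, which is not where the problem is.

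The paper's proof sidesteps both difficulties. It first establishes that the crude bound $\dim A^{\omega_{\underline\lambda}}_{n,d}\le \binom{n+d-1}{d}-n^{2}$ already holds for every $\underline\lambda\ne(1^n)$ when $d\ge 4$, and for $d=3$ whenever $\underline\lambda$ is not the partition $(2,1,\dots,1)$: in those cases at least one (for $d=3$ with the larger partitions, at least two) extra variable drops out of $f$, and a Pascal-type estimate finishes it. So the only partition needing a finer argument is $(2,1,\dots,1)$ with $d=3$. There, rather than computing any birational quotient by $N_{\underline\lambda}$, the paper passes to the subgroup $\Gamma\simeq\GL_{n-2}\ltimes\ga^{n-2}\subset N_{\underline\lambda}$, then splits into two cases: if the stabilizer of $f$ in $\Gamma$ contains a nontrivial unipotent subgroup, one reduces to an earlier (larger) partition; if not, the Genericity Theorem~\ref{thm:genericity} applies to the amenable stack $[A^{\omega}/\Gamma]$, and Lemma~\ref{lem.Gamma} provides the generic freeness that lets one compute $\ged_k[A^\omega/\Gamma]$ by a dimension count. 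That bifurcation --- isolating the points with nontrivial unipotent stabilizer so that the Genericity Theorem may legitimately be invoked on the remainder --- is precisely the ingredient your sketch is missing.
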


\begin{proof} By Lemma~\ref{lem:inequality-a} it suffices to show that
\begin{equation} \label{e.thm:non-reductive}
\ed_{k}[X^{\omega_{\underline\lambda}}/N_{\underline\lambda}] \leq \binom{n + d
- 1}{d} - n^{2} \, . \end{equation} for any $\underline\lambda \neq (1^n)$. The
space $A_{n,d}^{\omega_{\underline\lambda}}$ consists of the forms 
$f(\underline{x})$ such
that \[ f\bigl(\exp(-tA_{\underline{\lambda}} x) \bigr) = 
f(\underline{x}) \, . \] By
differentiating and applying the chain rule, this is equivalent to
 \begin{equation} \label{e.jacobian} 
 \bigtriangledown f(\underline{x}) \cdot A_{\underline\lambda} = 0 \,
 , \end{equation} 
 where $\bigtriangledown f = (\partial f /\partial x_1, \dots, 
 \partial f /\partial x_n)$
is the gradient of $f$. We now proceed
with the proof of~\eqref{e.thm:non-reductive} in three steps.

\smallskip \emph{Case 1:} Assume $d \ge 4$. By Lemma~\ref{lem:inequality-b} it
suffices to show that \begin{equation} \label{e.cases1and2} \dim
A_{n,d}^{\omega_{\underline\lambda}} \leq \binom{n + d - 1}{d} - n^{2} \, .
\end{equation} for any $\underline\lambda \neq (1^n)$. For $\underline\lambda
\neq (1^n)$ formula~\eqref{e.jacobian} tells us that $\partial f/\partial x_1$,
is identically zero. In other words, $f(\underline{x})$ is a form in $x_{2}$,
\dots,~$x_{n}$. Such forms lie in an affine subspace of $A_{n, d}$ isomorphic to
$A_{n-1, d}$. \smallskip Hence, \[ \dim A_{n,d}^{\omega_{\underline\lambda}} \le
\dim A_{n-1, d} = \binom{n + d - 2}{d}  \,, \] and it suffices to prove the
inequality \[ \binom{n+d -1}{d} - \binom{n + d - 2}{d} \geq n^{2} \] or
equivalently, \begin{equation} \label{e.binomial-a} \binom{n + d - 2}{d -1} \geq
n^{2} \, . \end{equation} Since $\binom{n + d - 2}{d -1} = \binom{n + d - 2}{n
-1}$ is an increasing function of $d$ for any given $n \ge 1$, it suffices to
prove~\eqref{e.binomial-a} for $d = 4$. In this case \[ \binom{n + d - 2}{d -1}
- n^{2} = \binom{n+2}{3} - n^2 = \frac{n(n-1)(n-2)}{6} \ge 0 \] for any $n \ge
2$, as desired.

\smallskip \emph{Case 2:} $d = 3$ and $\underline\lambda \neq (1^n)$ or $(2,
1^{n-1})$. Once again, it suffices to prove~\eqref{e.cases1and2}. If 
$\underline\lambda
\neq (1^n)$ or $(2, 1^{n-1})$ then~\eqref{e.jacobian} shows that for every
$f(\underline{x})$ in $A_{n,d}^{\omega_{\underline\lambda}}$ at least two of the partial
derivatives $\partial f/\partial x_{1}$, and $\partial f/\partial x_{i}$ are
identically zero. For notational simplicity we will assume that $i = 2$.  Then
$f(\underline{x})$ is a form in the variables $x_{3}$, \dots,~$x_{n}$.  Hence, \[ \dim
A_{n,3}^{\omega_{\underline\lambda}} \le \dim A_{n-2, 3} = \binom{n}{3} =
\binom{n+2}{3} - n^2 \, , \] as desired.

\smallskip \emph{Case 3:}  Finally assume $d = 3$ and 
$\underline\lambda = (2, 1^{n-1})$. Set $\omega \eqdef
\omega_{(2, 1^{n-1})}$ and $N \eqdef N_{(2, 1^{n-1})}$. By
Lemma~\ref{lem.special}(b) $N$ is a special group. Hence, we may identify
the set of isomorphism classes in $[A^{\omega}_{n,3}/N](K)$ with the set of $N(K)$-orbits in
$[A^{\omega}_{n,3}](K)$, for every field $K/k$.

By~\eqref{e.jacobian} $A_{n,d}^{\omega}$ consists of degree $d$ forms $f(x_1,
\dots, x_n)$ such that $\partial f/\partial x_{1} = 0$. That is, $f(x_1, \dots,
x_n) \in A_{n, d}^{\omega}$ if and only if $f$ is a form in the variables $x_2,
\dots, x_{n}$. Thus $A_{n,d}^{\omega}$ is an affine subspace of  $A_{n,d}$
isomorphic to $A_{n-1, d}$. Our goal is to show that
\begin{equation} \label{e.d=3} \ed_{k} f \le \binom{n+2}{3} - n^{2}
\end{equation} for any $f(\underline{x}) \in A_{n,d}^{\omega}(K)$.

The normalizer $N$ contains a subgroup \[ \Gamma \simeq \GL_{n-2}\ltimes
\ga^{n-2} \,   \] consisting of matrices of the form \[ \begin{pmatrix} 1 & 0 &
0 & \dots & 0 \\ 0 & 1 & 0 & \dots & 0 \\ 0 & a_{3} &  \\ \vdots & \vdots &   &
A &  \\ 0 & a_{n} & \end{pmatrix} \,  \] where $(a_3, \dots, a_n) \in \ga^{n-2}$
and $A \in \GL_{n-2}$. We may assume without loss of generality that the
stabilizer of $f$ in $\Gamma$ does not contain a non-trivial unipotent subgroup.
Indeed, if it does then $f$ is a $\Gamma$-translate of an element of
$A_{n,3}^{\omega_{\underline\lambda}}$ for some $\underline\lambda \neq (1^n)$
or $(2, 1^{n-1})$. For such $f$ the inequality~\eqref{e.d=3} was established in
Case 2.

Since both $\Gamma$ and $N$ are special, it is obvious that
the essential dimension of $f$, viewed as an
element of $[A^{\omega}_{n,3}/N](K) = \Orb_{N, A^{\omega}_{n, 3}}(K)$ is no
greater than the essential dimension of $f$, viewed as an element of
$[A^{\omega}_{n,3}/\Gamma](K) = \Orb_{\Gamma, A^{\omega}_{n, 3}}(K)$. Since we
are assuming that the stabilizer of $f$ in $\Gamma$ does not contain any
non-trivial unipotent subgroups, the Genericity Theorem~\ref{thm:genericity}
tells us that \[ \ed_{k}f \le \ged_{k} [A^{\omega}_{n, 3}/\Gamma] \, . \] By
Lemma~\ref{lem.Gamma} below for $n \ge 4$ the action of $\Gamma \subset
\GL_{n-1}$ on the space $A_{n-1, 3}$ of forms of degree~$3$ in the $n-1$
indeterminates $x_{2}$, \dots,~$x_{n}$ is generically free. Thus
$[A_{n,3}/\Gamma]$ is an amenable stack and is generically a scheme.
Consequently, \[ \ged_{k}  [A^{\omega}_{n, 3}/\Gamma_{n-2}]  = \dim A_{n-1, 3} -
\dim \Gamma_{n-2} = \binom{n+1}{3} - (n-2)^{2} - (n-2)\, . \] A simple
computation shows that \[ \binom{n+1}{3} - (n-2)^{2} - (n-2) \leq \binom{n+2}{3}
- n^{2} \,, \] for any $n \ge 4$; indeed, \[ \binom{n+2}{3} - n^{2} -
\left(\binom{n+1}{3} - (n-2)^{2} - (n-2)\right) = \frac{(n-2)(n-3)}{2} - 1 \ge
0. \qedhere\] \end{proof}

\begin{lemma} \label{lem.Gamma} \hfil \begin{enumeratea}

\item Assume that the base field $k$ is algebraically closed and
$G$ is a connected linear algebraic $k$-group
such that $N \cap Z(G) \ne \{ 1 \}$ for every closed normal 
subgroup $\{ 1 \} \ne N \triangleleft G$. Here $Z(G)$ denotes the center of $G$.
(For example, $G$ could be almost simple or $\GL_n$.) 
Let $H_1$, $H_2$ be closed subgroups of $G$ such that $H_1$ is
finite and $H_2$ contains no non-trivial central elements of $G$. Then for $g
\in G(k)$ in general position, $H_1 \cap g H_2 g^{-1} = \{ 1 \}$.

\item Assume $d \ge 3$ and $n \ge 1$.  
Let $\GL_{n-1}$ be the subgroup of\/ $\GL_n$
acting on the variables $x_2, \dots, x_n$. Then for $f \in A_{n, d}$ 
in general position, $\Stab_{\GL_{n-1}}(f) = \{ 1 \}$.
\end{enumeratea} \end{lemma}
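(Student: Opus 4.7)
The plan is as follows. For part~(a), since $H_1$ is finite, it suffices to show that for each non-identity $h \in H_1$ the closed subset $B_h \eqdef \{g \in G : g^{-1} h g \in H_2\}$ is a proper subset of $G$; the desired dense open set is then the complement of the finite union $\bigcup_{h \ne e} B_h$. The crux is to verify $B_h \subsetneq G$, that is, that the $G$-conjugacy class of $h$ is not contained in $H_2$. I plan to argue by contradiction: if the conjugacy class lay in $H_2$, then the closed subgroup $N$ of $G$ generated by it would be a non-trivial closed normal subgroup of $G$ contained in $H_2$; by the hypothesis on $G$ one has $N \cap Z(G) \ne \{e\}$, which contradicts $H_2 \cap Z(G) = \{e\}$.

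For part~(b), when $n = 1$ the group $\GL_{n-1}$ is trivial and the statement is vacuous, so assume $n \ge 2$. I would first choose a form $f_0 \in A_{n,d}$ for which $H_1 \eqdef \Stab_{\GL_n}(f_0)$ is finite, which is possible by Example~\ref{ex.amenable2}. Next I would apply part~(a) with $G = \GL_n$ and $H_2 = \GL_{n-1}$, embedded as $A \mapsto \diag(1, A)$. The hypothesis on $G$ holds for $\GL_n$: finite normal subgroups of the connected group $\GL_n$ are central by rigidity, and the only positive-dimensional connected normal subgroups are $\gm$, $\SL_n$ and $\GL_n$, each of which meets $Z(\GL_n) = \gm$ non-trivially. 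Moreover $\GL_{n-1}$ contains no non-trivial scalar $\lambda I_n$, since this would force $\lambda = 1$. Part~(a) then produces $g_0 \in \GL_n$ with $g_0 H_1 g_0^{-1} \cap \GL_{n-1} = \{e\}$ (the conditions ``general $g$'' and ``general $g^{-1}$'' coincide), so that $f_1 \eqdef g_0 \cdot f_0$ satisfies $\Stab_{\GL_{n-1}}(f_1) = \{e\}$.

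The main obstacle is to promote this pointwise statement to a dense open subset of $A_{n,d}$ on which the $\GL_{n-1}$-stabilizer is trivial. A direct semi-continuity argument does not apply because the stabilizer group scheme $\Stab_{\GL_{n-1}}(\cdot) \to A_{n,d}$ is only quasi-finite over the locus where its fibres are zero-dimensional. My plan is to invoke Luna's \'etale slice theorem: the trivial stabilizer at $f_1$ is reductive, so by Matsushima's criterion the orbit $\GL_{n-1} \cdot f_1$ is closed in the affine variety $A_{n,d}$, and Luna's theorem then supplies a $\GL_{n-1}$-equivariant \'etale neighborhood of this orbit of the form $\GL_{n-1} \times S$, on which all stabilizers are visibly trivial. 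Since the formation of the stabilizer group scheme commutes with \'etale base change, \'etale descent produces a Zariski-open neighborhood of the orbit in $A_{n,d}$ on which every point has trivial $\GL_{n-1}$-stabilizer, completing the argument.
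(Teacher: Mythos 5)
Your proof of part~(a) is correct and is essentially the argument in the paper (the paper phrases it via the action of $H_1$ on $G/H_2$ being non-faithful, but the content — that if $B_h = G$ for some $h\ne e$, then the closed normal subgroup generated by the conjugacy class of $h$ violates the hypotheses — is the same).

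Your route to part~(b) is genuinely different from the paper's, and it is where the trouble lies. The paper decomposes
$f(\underline x) = \sum_{i=0}^{d} x_1^{d-i} f_i(x_2,\dots,x_n)$
with $f_i$ homogeneous of degree $i$ in $x_2,\dots,x_n$, observes that
$\Stab_{\GL_{n-1}}(f) = \bigcap_{i=1}^{d}\Stab_{\GL_{n-1}}(f_i)$,
and for $f$ generic identifies each $\Stab_{\GL_{n-1}}(f_i)$ with an independently generic conjugate of the Richardson principal isotropy group $S_{n-1,i}\subset\GL_{n-1}$; part~(a) is then applied with $G=\GL_{n-1}$ and $H_1,H_2$ two of these groups (one finite, e.g.\ $S_{n-1,d}$, one scalar-free, e.g.\ $S_{n-1,1}$). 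Because the $g_i$ vary independently with $f$, this gives the triviality of the intersection on a dense open subset of $A_{n,d}$ \emph{directly}. Your idea of instead taking $G = \GL_n$, $H_1 = \Stab_{\GL_n}(f_0)$ (finite), $H_2 = \GL_{n-1}$ in part~(a) is clean and does produce a single form with trivial $\GL_{n-1}$-stabilizer, but it then leaves you needing a separate openness/spreading argument, which the paper's decomposition avoids entirely.

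The spreading step contains a real error. Matsushima's criterion says that for reductive $G$ and a closed subgroup $H$, the homogeneous space $G/H$ is \emph{affine} if and only if $H$ is reductive; it does \emph{not} say that an orbit with reductive (or trivial) stabilizer is \emph{closed} in an ambient affine $G$-variety. The converse direction you use is false: for $\gm$ acting by scaling on $\AA^1$, the orbit of $1$ is $\gm$, which has trivial stabilizer and is itself affine, yet is not closed in $\AA^1$. Consequently Luna's \'etale slice theorem does not apply at $f_1$ as you suggest, and your argument does not establish that the trivial-stabilizer locus is open (or even non-empty as an open set). Note that mere existence of one point with trivial stabilizer together with Richardson's theorem is not enough either: Richardson only gives a conjugacy class of stabilizers on \emph{some} dense open $U$, and a priori your $f_1$ may lie in $A_{n,d}\setminus U$. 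What you would actually need is the stronger fact that for a reductive group acting on a smooth affine variety in characteristic~$0$, \emph{every} isotropy group contains a conjugate of the principal isotropy group (Luna; see Popov--Vinberg). That is a genuine theorem, not a formal semi-continuity statement, and it is not what you invoked. Either cite that result explicitly, or switch to the paper's decomposition-into-slices argument, which sidesteps the issue.
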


\begin{proof} (a) Assume the contrary. Consider the natural (translation) 
action of $H_1$ on the homogeneous space $G/H_2$. By our assumption 
this action is not generically free. Since $H_1$ is finite, we 
conclude that this action is not
faithful, i.e., some $1 \ne h \in H_1$ acts trivially on $G/H_2$. Then $h$ lies
in $N = \bigcap_{g \in G} g H_2 g^{-1}$. Consequently, $N$ is a non-trivial
normal subgroup of $G$. By our assumption $N$ (and hence, $H_2$) contains a
non-trivial central element of $G$, a contradiction.

\smallskip (b) We may assume that $k$ is algebraically closed. By \cite[Theorem
A]{richardson2}, there exists a subgroup $S_{n, d} \subset \GL_n$ and a dense
open subset $U \subset A_{n, d}$ such that $\Stab_{\GL_n}(f)$ is conjugate to
$S$ for every $f \in U$. Moreover, for $d \ge 3$ (and any $n \ge 1$)
$S_{n, d}$ is a finite group; see Example~\ref{ex.amenable2}.

Write $f(x_1, \dots, x_n) = \sum_{i = 0}^d x_1^{d-i} f_i(x_2, \dots, x_n)$,
where $f_i$ is a form of degree $i$ in $x_2, \dots, x_n$. Clearly $g \in
\GL_{n-1}$ stabilizes $f$ if and only if it stabilizes $f_1, f_2, \dots, f_d$.
In other words, \[ \Stab_{\GL_{n-1}}(f) = \bigcap_{i = 1}^{d} \,
\Stab_{\GL_{n-1}}(f_i) \, . \] Moreover, each $\Stab_{\GL_{n-1}}(f_i)$ is a
conjugate of $S_{n-1, i}$ in $\GL_{n-1}$. Thus it suffices to show that for
$g_1, \dots, g_d$ in general position in $\GL_{n-1}$, \begin{equation}
\label{e.intersection} g_1 S_{n-1, 1} g_1^{-1} \cap \dots \cap g_d S_{n-1, d}
g_d^{-1} = \{ 1 \} \, . \end{equation} This is a consequence of part (a), with
$G = \GL_n$, $H_1 = S_{n-1, 1}$ and $H_2 = S_{n-1, d}$. \end{proof}

\begin{remark} We note that if $d \ge 4$ and $n \ge 3$ or $d = 3$ and $n \ge
5$ then \eqref{e.intersection} is immediate, since $S_{n-1, d} = \{ 1 \}$; 
see Example~\eqref{ex.amenable2}. However, this argument does not 
cover the case where $d = 3$ 
and $n = 3$ or $4$, which are needed for the proof of 
Theorem~\ref{thm:non-reductive} above.
\end{remark}

\section{Proof of Theorem~\ref{thm.main}} \label{sect.proof-of-main-thm}

Theorem~\ref{thm.main}(a) is an immediate consequence of what we have done so
far. Indeed, by Proposition~\ref{prop.generic-ed}(a), \[ \ged_{k} [A_{n,
d}/\GL_n] = \binom{n + d - 1}{d} - n^2 + 1 + \cd(\GL_n/\mu_d) \, . \] Thus it
suffices to show that for any field extension $K/k$ and any $K$-point $\zeta$ of
$[A_{n, d}/\GL_n](K)$, we have \[ \ed_{k}\zeta \le \ged_{k} [A_{n, d}/\GL_n] \,
. \] If the automorphism group scheme $\underaut_{K}(\zeta)$ is reductive, this
is a direct consequence of Theorem~\ref{thm:genericity}, and if
$\underaut_{K}(\zeta)$ is not reductive, then Theorem~\ref{thm:non-reductive}
tells us that \[ \ed_{k}\zeta \le \binom{n + d - 1}{d} - n^2 < \ged_{k} [A_{n,
d}/\GL_n] \, . \] This completes the proof of Theorem~\ref{thm.main}(a). The
rest of this section will be devoted to proving Theorem~\ref{thm.main}(b). The
main complication here is that the stack $[\PP(A_{n, d})/\GL_n]$ is not amenable
(see Example~\ref{ex.amenable2})
and thus our Genericity Theorem~\ref{thm:genericity} does not
apply. We will get around this difficulty by relating $[\PP(A_{n, d})/\GL_n]$ to
the amenable stack $[\PP(A_{n, d})/\PGL_n]$.

\begin{proposition} \label{prop.gerbes} Let $\cX = [\PP(A_{n, d})/\GL_n]$ and
$\overline{\cX} \eqdef [\PP(A_{n, d})/\PGL_n]$, with the natural projection
$\phi \colon \cX \to \overline{\cX}$. Then for any extension $L/k$ and any
$L$-point $\xi \colon \Spec L \to \cX$, \[ \ed_{k}\xi \le \ed_k \phi(\xi)  +
\cd(\GL_n/\mu_d) \, . \] \end{proposition}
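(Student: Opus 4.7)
The plan is to exhibit $\phi\colon \cX\to \overline\cX$ as a $\gm$-gerbe, bound $\ed_{k}\xi$ by $\ed_{k}\phi(\xi)$ plus the canonical dimension of the gerbe's class at $\phi(\xi)$, and then apply Lemma~\ref{lem.brauer-severi.cd}(c).

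Since the center $\gm\subset\GL_n$ acts trivially on $\PP(A_{n,d})$, the morphism $\phi$ is a $\gm$-gerbe. Given $\xi\in\cX(L)$, set $\bar\xi:=\phi(\xi)$ and choose a descent $\bar\xi_0\in\overline\cX(K_0)$ with $\trdeg_{k}K_0=\ed_{k}\bar\xi$. Let $\cG_0:=\cX\times_{\overline\cX}\spec K_0$, a $\gm$-gerbe over $\spec K_0$ of class $\alpha_0\in\H^2(K_0,\gm)$. The isomorphism $\bar\xi_0\otimes L\cong\bar\xi$ realizes $\xi$ as an $L$-point of $\cG_0$, and Proposition~\ref{prop.ed-gerbe}(a) gives $\ed_{K_0}\cG_0=\cd\alpha_0$, so $\xi$ descends to some $K_1/K_0$ with $\trdeg_{K_0}K_1\le\cd\alpha_0$. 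Consequently $\ed_{k}\xi\le\ed_{k}\bar\xi+\cd\alpha_0$, and the proof reduces to showing $\cd\alpha_0\le\cd(\GL_n/\mu_d)$.

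By Lemma~\ref{lem.brauer-severi.cd}(c), it suffices to verify that $\ind\alpha_0\mid n$ and $\exp\alpha_0\mid d$. The index bound is immediate: $\alpha_0$ is the image, under the coboundary of $1\to\gm\to\GL_n\to\PGL_n\to 1$, of the $\PGL_n$-torsor underlying $\bar\xi_0$, so Lemma~\ref{lem.brauer-severi.cd}(b) applies.

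The exponent bound is the main obstacle. The key input is that $\mu_d\subset\GL_n$ acts trivially on $A_{n,d}$, so $G:=\GL_n/\mu_d$ acts on $A_{n,d}\setminus\{0\}$; its center $\gm/\mu_d$ acts freely via $[\lambda]\cdot F=\lambda^{-d}F$, with quotient $\PP(A_{n,d})$. This yields a natural equivalence $\overline\cX\simeq[(A_{n,d}\setminus\{0\})/G]$, identifying $\bar\xi_0$ with a $G$-torsor $Q_0$ on $\spec K_0$. Similarly, writing $\cX\simeq[(A_{n,d}\setminus\{0\})/G']$ with $G':=\GL_n\times\gm$ (second factor acting by scaling), the gerbe $\phi$ arises from the central extension $1\to\gm\to G'\to G\to 1$ in which $\gm$ embeds as $\lambda\mapsto(\lambda I,\lambda^d)$. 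The inclusion $\GL_n\hookrightarrow G'$, $A\mapsto(A,1)$, gives a map of central extensions from $1\to\mu_d\to\GL_n\to G\to 1$ to this sequence, with left-hand vertical the canonical $\mu_d\hookrightarrow\gm$. Passing to cohomology yields $\alpha_0=\iota(\beta_0)$, where $\beta_0:=\delta[Q_0]\in\H^2(K_0,\mu_d)$, $\delta$ is the coboundary of $1\to\mu_d\to\GL_n\to G\to 1$, and $\iota\colon\H^2(K_0,\mu_d)\to\H^2(K_0,\gm)$ is the natural map. Hence $d\alpha_0=0$, completing the exponent bound.
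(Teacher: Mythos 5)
Your proof is correct, and its overall skeleton matches the paper's: exhibit $\phi$ as a $\gm$-gerbe, descend $\phi(\xi)$ to a field $K_0$ minimizing transcendence degree, descend $\xi$ further over $K_0$ at cost at most $\cd\alpha_0$ by Proposition~\ref{prop.ed-gerbe}(a), and then bound $\cd\alpha_0$ by $\cd(\GL_n/\mu_d)$. Where you genuinely diverge is in the last step. The paper establishes the index and exponent bounds for the class of $\cX_{\eta}$ only at the generic point $\eta$ of $\overline\cX$ (drawing on the versal-torsor analysis in the proof of Proposition~\ref{prop.generic-ed}) and then transfers them to $K_0$ via the semicontinuity Lemma~\ref{lem.index}. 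You instead prove the bounds pointwise, by rewriting $\overline\cX\simeq[(A_{n,d}\setminus\{0\})/G]$ for $G=\GL_n/\mu_d$, reading off a $G$-torsor $Q_0$ from $\bar\xi_0$, and showing via a map of central extensions that $\alpha_0=\iota(\delta[Q_0])$ with $\delta[Q_0]\in\H^2(K_0,\mu_d)$. This removes the need for Lemma~\ref{lem.index} (and with it the Gabber and Artin input behind that lemma) in the proof of Theorem~\ref{thm.main}(b), which is a genuine simplification of the argument.

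Two small corrections. First, citing Lemma~\ref{lem.brauer-severi.cd}(b) for the index bound is off: that part of the lemma is an equality of canonical dimensions, not an index statement. What you want is the classical fact that a class in the image of $\H^1(K_0,\PGL_n)\to\H^2(K_0,\gm)$ has index dividing $n$. In fact, once you know $\alpha_0=\iota(\delta[Q_0])$ for a $\GL_n/\mu_d$-torsor $Q_0$, both the index and the exponent bounds follow in one stroke from \cite[Lemma 2.6]{ber}, so the separate $\PGL_n$-torsor argument is redundant; alternatively, Lemma~\ref{lem.brauer-severi.cd}(b) applied with $C=\mu_d$ (together with part (a)) immediately gives $\cd\alpha_0=\cd Q_0\le\cd(\GL_n/\mu_d)$, bypassing part (c) altogether. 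Second, the surjection $G'=\GL_n\times\gm\to G$ with kernel $\{(\lambda I,\lambda^d)\}$ is not the obvious projection $(A,t)\mapsto[A]$, whose kernel is $\mu_d\times\gm$; it must be twisted by a central character $\gm\to Z(G)$, namely the unique one sending $\lambda^d$ to the class of $\lambda^{-1}I$ in $\gm/\mu_d$. You should spell this out, although the rest of the argument goes through once this is done.
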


\begin{proof} Note that $\cX$ is a gerbe banded by $\gm$ over $\overline{\cX}$.

By the definition of $\ed_k \phi(\xi)$ there exists an intermediate field $k
\subset K \subset L$ such that $\phi(\xi)$ descends to $\Spec K$ and $\trdeg_k K
= \ed_k \phi(\xi)$. Moreover, $\xi \colon \Spec L \to \cX_K$ factors through a
point $\xi_0 \colon \Spec L \to \cX_{K}$, as in the diagram below. \[ \xymatrix{
            &  \cX_K \ar[r] \ar[d] & \cX \ar[d]^{\phi} \\ \overline{\xi} \colon
\Spec L  \ar[ur]^{\xi_0}  \ar[r] & \Spec K \ar[r] & \overline{\cX}, } \] Note
that $\cX_K$ is a $\gm$-gerbe over $K$. So $\xi_0$ (and hence, $\xi$) descends
to some intermediate subfield of $K \subset K_0 \subset L$ such that $\trdeg_K
K_0 \le \ed_k (\cX_K) = \cd (\cX_K)$, where the last equality is
Proposition~\ref{prop.ed-gerbe}(a). Let $\eta$ be the generic point of
$\overline{\cX}$. We know that the Brauer class of $\cX_{\eta}$ has index
dividing $n$ and exponent dividing $d$; see the proof of
Proposition~\ref{prop.generic-ed}. By Lemma~\ref{lem.index} the same is true of
the Brauer class of $\cX_{K}$. Therefore, by
Lemma~\ref{lem.brauer-severi.cd}(c), $\cd(\cX_K) \le \cd (\GL_n/\mu_d)$. In
summary, \[ \ed_{k}\xi \le \trdeg_k K_0 = \trdeg_k K + \trdeg_K K_0 \le
\ed_k(\phi(\xi)) + \cd(\cX_K), \] as claimed. \end{proof}

\medskip \begin{proof}[Proof of Theorem~\ref{thm.main}(b)] Let $\cX \eqdef
[\PP(A_{n, d})/\GL_n]$, $\overline{\cX} \eqdef [\PP(A_{n, d})/\PGL_n]$, and $\cY
\eqdef [A_{n, d}/\GL_n]$ and consider the following diagram of natural maps. \[
\xymatrix{   &  & \cY \ar[dll] \ar[ddll]^{\psi, \, \text{\tiny a $\mu_d$-gerbe}}
 \\ \cX \ar[d]_{\phi, \, \text{\tiny a $\gm$-gerbe}} &  & \\ \overline{\cX} &  &
}   \] In view of Proposition~\ref{prop.gerbes}, it suffices to show that for
every field extension $L/k$ and every $L$-point $\xi \colon \Spec L \to
\overline{\cX}$, 
\begin{equation} \label{e.genericity-base} \ed_{k} \phi(\xi)
\le \binom{n + d - 1}{d} - n^2 \, . \end{equation} 
Recall from Example~\ref{ex.amenable2} that under our 
assumptions on $n$ and $d$, we have that 
$\overline{\cX} = [\PP(A_{n, d})/\PGL_n]$ 
is amenable and is generically a scheme of dimension 
\[ \dim \PP(A_{n, d}) - \dim \PGL_n = \binom{n + d - 1}{d} - n^2 \, . \] 
If the automorphism group scheme $\underaut_{L}(\phi(\zeta))$ is
reductive then~\eqref{e.genericity-base} holds by the Genericity
Theorem~\ref{thm:genericity} applied to $\overline{\cX}$.

We may therefore assume that $\underaut_{L}(\phi(\zeta))$ 
is not reductive. Lift $\xi$ to some 
$\zeta\colon \spec L \arr \cY$. (This can be done because $\cY \to
\cX$ is a $\gm$-torsor.) Since the automorphism group scheme
$\underaut_{L}(\zeta)$ is contained in the preimage of
$\underaut_{L}(\phi(\zeta))$ under the natural projection map $\GL_n \to
\PGL_n$, we see that $\underaut_{L}(\zeta)$ is not reductive. Now
$\ed_{k}\phi(\xi) \le \ed_{k}\zeta$ and in view of
Theorem~\ref{thm:non-reductive} 
\[ \ed_{k}\zeta \le  \binom{n + d - 1}{d} - n^2 \, . \] 
This completes the proof of~\eqref{e.genericity-base} and thus of
Theorem~\ref{thm.main}(b). \end{proof}

\section{Small $n$ and $d$}
\label{sect.small}

In this section we compute $\ed_{k}\Forms_{n, d}$ and 
$\ed_{k}\Hypersurf_{n, d}$ in the cases not covered by 
Theorem~\ref{thm.main}, building on the results
of~\cite{bf1} and~\cite[Section 16]{ber}.

To handle the case where $n = 2$, we need the following 
variant of \cite[Lemma 16.1]{ber}. The proof is similar; 
we reproduce it here, with the necessary
modifications, for the sake of completeness.

\begin{lemma} \label{lem.binary} $\ed_{k}\Forms_{2, d} \le d-1$ and
$\ed_{k}\Hypersurf_{2, d} \le d-2$ for any $d \ge 3$. \end{lemma}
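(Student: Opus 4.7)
The plan is to proceed by explicit $K$-rational normalization. Write $F(x,y) = \sum_{i=0}^{d} a_i\, x^{d-i} y^i \in A_{2,d}(K)$. Since $\Char k = 0$, the field $K$ is infinite, so $\PP^1(K)$ contains a point $(\alpha:\beta)$ at which $F$ does not vanish; applying a $K$-rational change of coordinates sending this point to $(1:0)$, I may assume $a_0 = F(1,0) \ne 0$. Then the translation $x \mapsto x + by$ with $b = -a_1/(d a_0) \in K$ kills $a_1$ while preserving $a_0$, bringing $F$ to the shape $a_0 x^d + a_2 x^{d-2} y^2 + \dots + a_d y^d$ with $d$ remaining free coefficients in $K$. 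Already this shows $\ed_{k} \Forms_{2,d} \le d$, and the rest of the proof is dedicated to squeezing out one (resp.\ two) additional normalizations.

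For $\Forms_{2,d}$ I would next apply the scaling $x \mapsto \alpha x$ with $\alpha = 1/a_{d-1}$ (when $a_{d-1} \ne 0$): this preserves $a_1 = 0$ (since $a_1 \mapsto \alpha^{d-1} a_1$) and normalizes $a_{d-1} = 1$, leaving $d-1$ free coefficients in $K$; if $a_{d-1} = 0$ already, the form has at most $d-1$ free coefficients anyway. For $\Hypersurf_{2,d}$ the extra scalar freedom $F \sim \mu F$ lets me gain one further normalization. The key step is to \emph{simultaneously} normalize $a_{d-1}$ and $a_d$ to $1$ by combining the $x$-scaling with a scalar rescaling: under $x \mapsto \alpha x$ together with $F \mapsto F/\mu$ the coefficients transform as $a_i \mapsto \alpha^{d-i} a_i/\mu$, and the system $\alpha\, a_{d-1} = \mu$, $a_d = \mu$ has the $K$-rational solution $\mu = a_d$, $\alpha = a_d/a_{d-1}$, provided $a_{d-1} a_d \ne 0$. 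This leaves $d-2$ free coefficients. The degenerate cases $a_{d-1} = 0$ or $a_d = 0$ (in the latter, $F$ factors as $x \cdot G$ with $\deg G = d-1$) admit similar or simpler normalizations and also yield $d-2$ free coefficients.

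The only real obstacle is verifying that the pair of normalizations for hypersurfaces can both be carried out $K$-rationally. This works precisely because the indices $d-1$ and $d$ differ by exactly $1$, so the defining system is linear in $\alpha$ over $K$; an analogous attempt to normalize two coefficients whose indices differ by $m \ge 2$ would require extracting an $m$-th root of an element of $K$, which is generally unavailable. The same linearity is what underlies the single $x$-scaling used in the $\Forms$ case.
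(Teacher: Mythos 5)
Your proposal is correct and follows essentially the same approach as the paper: a Tschirnhaus substitution kills $a_1$, and a scaling of $x$ (combined, for hypersurfaces, with a scalar rescaling) normalizes the trailing coefficients. The only cosmetic differences are that the paper does not first force $a_0 \neq 0$ and normalizes to $a_{d-1} = a_d$ rather than $a_{d-1} = a_d = 1$.
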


\begin{proof} Let $f(x_1, x_2) = a_0 x_1^d + a_1 x_1^{d-1} x_2 + \dots + a_d
x_2^d$ be a non-zero binary form of degree $d$ over a field $K/k$. We claim that
that $f$ is equivalent (up to a linear coordinate changes by elements of
$\GL_2(K)$) to a binary form with (i) $a_0 = 0$ or $a_1 = 0$ and  (ii) $a_{d-1}
= 0$ or $a_{d-1} = a_d$. In each case $f$ descends to the field
$k(a_0, \dots, a_d)$ and the hypersurface in $\mathbb{P}^1$ 
cut out by $f$ descends to the field $k(a_i/a_j \, | a_j \ne 0)$. 
If (i) and (ii) are satisfied then the transcendence degrees 
of these fields over $k$ are clearly $\le d-1$ and $d-2$, respectively. 
So, the lemma follows from the claim.

To prove the claim, we first reduce $f$ to a form satisfying (i).  If $a_0 = 0$,
we are done. If $a_0 \ne 0$, then performing the Tschirnhaus substitution 
\[ x_1 \mapsto x_1 - \frac{a_1}{d a_0} x_2 \, , \quad x_2 \mapsto x_2 \] 
we reduce $f$ to a binary form with $a_1 = 0$.

Now assume that $f$ satisfies (i). We want to further reduce it to a form
satisfying both (i) and (ii). If $a_{d-1} = 0$, we are done. 
If $a_{d-1} \ne 0$, rescale $x_1$ as follows 
\[ x_1 \mapsto \frac{a_d}{a_{d-1}} x_1 \, , \quad x_2
\mapsto x_2 \, ,  \] to reduce $f$ to a form satisfying (i) and $a_{d-1} = a_d$.
This completes the proof of the claim and the lemma. \end{proof}

\begin{proposition} \label{prop.hyper}For any $n \geq 1$ and $d \ge 2$ 
we have

\begin{enumeratea}

\item $\ed_{k}\Forms_{n, 1} = \ed_{k}\Hypersurf_{n, 1} = 0$,

\item $\ed_k \Forms_{1, d} = 1$ and  $\ed_{k} \Hypersurf_{1, d} = 0$, 

\item $\ed_{k}\Forms_{n, 2} = n$\quad and\quad $\ed_{k}\Hypersurf_{n, 2} = n-1$,

\item $\ed_{k}\Forms_{2, 3} = 2$\quad and\quad $\ed_{k}\Hypersurf_{2, 3} = 1$,

\item $\ed_{k}\Forms_{2, 4} = 3$\quad and\quad $\ed_{k}\Hypersurf_{2, 4} = 2$,

\item $\ed_{k}\Forms_{3, 3} = 4$\quad and\quad $\ed_{k}\Hypersurf_{3, 3} = 3$.

\end{enumeratea} \end{proposition}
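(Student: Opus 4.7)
The plan is to handle each of the six cases separately, since they require ad hoc arguments that fall outside the scope of Theorem~\ref{thm.main}.

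First I would dispatch (a) and (b). In (a), every nonzero linear form is $\GL_n(K)$-equivalent to $x_1$, which already descends to $k$. In (b), the form $a x_1^d$ is sent to $\lambda^d a\, x_1^d$ under $x_1\mapsto\lambda x_1$, so $\Forms_{1,d}(K)\setminus\{0\}$ identifies with $K^*/(K^*)^d = \H^1(K,\mu_d)$, giving $\ed_k\Forms_{1,d}=\ed_k\mu_d=1$; the associated hypersurface in $\PP^0=\Spec K$ is either empty or all of $\PP^0$, hence descends to $k$. For (c), I would diagonalize every quadratic form over $K$ as $\sum a_i x_i^2$ to get the upper bound $\ed_k\Forms_{n,2}\le n$, and further rescale a nonzero coefficient to get $\ed_k\Hypersurf_{n,2}\le n-1$; the matching lower bounds then follow from the classical identification of nondegenerate forms (resp.\ quadrics) as torsors for $\rO_n$ (resp.\ its projective variant) together with the known identities $\ed_k \rO_n = n$ and $\ed_k \mathrm{PGO}_n = n-1$.

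Next, the upper bounds in (d) and (e) are supplied directly by Lemma~\ref{lem.binary}, and the Hypersurf bound in (f) is the Berhuy--Favi result recalled in the introduction. The matching lower bounds for $\ed_k\Hypersurf$ reduce in each case to classical moduli: a binary quartic (resp.\ plane cubic) produces, via double cover (resp.\ directly), an elliptic curve, so $\ed_k\Hypersurf_{2,4}$ and $\ed_k\Hypersurf_{3,3}$ inherit lower bounds ultimately coming from $\ed_k\cM_{1,1}=2$, with the extra $+1$ in the $(3,3)$ case coming from the degree-$3$ polarization. To upgrade each Hypersurf bound to the corresponding Forms bound (adding $1$), I would use the natural $\mu_d$-gerbe $[A_{n,d}/\GL_n]\to[\PP(A_{n,d})/\PGL_n]$ analyzed in the proof of Proposition~\ref{prop.generic-ed} and apply Proposition~\ref{prop.ed-gerbe}(b) to extract the required $+1$ after checking that the relevant gerbe class is nontrivial at the generic point.

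The main obstacle will be executing the lower bound arguments in (d), (e), (f) cleanly: identifying the stack quotient with the appropriate moduli of pointed curves, keeping track of the Brauer or $\mu_d$ class of the relevant gerbe, and verifying that its canonical dimension contributes the correct amount. A few of these small cases may need to be handled by direct coefficient-counting on normal forms rather than by a uniform moduli argument, but the overall structure mirrors that of the main theorem in miniature.
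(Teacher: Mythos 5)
Your treatment of (a), (b), and the upper bounds in (c)--(e) matches the paper; the key inequality $\ed_{k}\Forms_{n,d} \le \ed_{k}\Hypersurf_{n,d}+1$ is also present in the paper as \eqref{e.+1}. But several of the lower--bound steps you sketch have genuine problems.

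For (c), the paper simply cites \cite[Proposition 16.2(b)]{ber}. Your alternative identification is off: in the quotient stack $[\PP(A_{n,2})/\GL_n]$ the stabilizer of a nondegenerate quadric is the similitude group $\mathrm{GO}_n$, not $\mathrm{PGO}_n$ (the latter would be the stabilizer in $[\PP(A_{n,2})/\PGL_n]$, a different stack, and in that case $\Hypersurf_{n,2}$ is a $\gm$-gerbe over it rather than equal to it). Moreover, $\ed_k\mathrm{PGO}_n$ is not $n-1$ for $n$ even in general, so even as a heuristic the group is wrong.

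For the lower bounds in (d)--(f), the paper uses \cite[Proposition 16.2(c),(d)]{ber} and, in case (f), a Richardson stabilizer-in-general-position argument combined with the Karpenko--Merkurjev theorem. Your elliptic-curve argument can plausibly be made to work for (e) and the Hypersurf part of (f), but it gives nothing for (d): three points on $\PP^1$ have trivial moduli, so the content of $\ed_k\Hypersurf_{2,3}=1$ is arithmetic (an irreducible binary cubic, i.e.\ a degree-$3$ \'etale algebra), not geometric, and you have not addressed it.

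The biggest gap is your proposed ``$+1$ upgrade'' from $\Hypersurf$ to $\Forms$ via Proposition~\ref{prop.ed-gerbe}(b). That proposition applies to a $\mu_e$-gerbe over a \emph{field}, which requires the base stack to be generically a scheme. Precisely in the exceptional pairs $(2,3)$, $(2,4)$, $(3,3)$, the stack $[\PP(A_{n,d})/\PGL_n]$ has nontrivial generic stabilizer (see Example~\ref{ex.amenable2}), so the generic residual gerbe is banded by a nontrivial finite group $S_{n,d}$ rather than by $\mu_d$. For $(3,3)$ in particular, $S_{3,3}$ contains a Heisenberg-type group of order $27$, and the paper's lower bound $\ed_k\Forms_{3,3}\ge 4$ comes from the Karpenko--Merkurjev theorem applied to that finite group plus the one-dimensional moduli, not from a $\mu_3$-gerbe computation. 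Your plan of ``checking the gerbe class is nontrivial'' and invoking Proposition~\ref{prop.ed-gerbe}(b) does not produce this bound and would need to be replaced by an argument about the residual gerbe for $S_{3,3}$ --- which is essentially what the paper does.
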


\begin{proof} First we note that
\begin{equation} \label{e.+1}
\ed_{k}\Forms_{n, d} \le \ed_{k}\Hypersurf_{n, d} + 1 \, . 
\end{equation}
This is easy to see directly from the definition or, 
alternatively, as a special case of the Fiber Dimension
Theorem~\cite[Theorem 3.2(b)]{brv-jems}, applied 
to the representable morphism
of quotient stacks $ [A_{n, d}\setminus \{0\}/\GL_n] \to [\PP(A_{n, d})/\GL_n]$ of
relative dimension $1$.

\smallskip
(a) Any non-zero linear form $F(x_1, \dots, x_n)$ over any field
$K/k$ is equivalent to $x_1$.

\smallskip
(b) Degree $d$ forms $f_1(x) = ax^d$ and
$f_2(x) = bx^d$, over a field $K/k$ are equivalent if 
and only if $b = a c^d$ for some $c \in K^* = \GL_1(K)$.
The assertions of part (b) follow easily from this.

\smallskip (c) Any quadratic form $F(x_1, \dots, x_n)$ over $K/k$ can be
diagonalized and hence, is defined over an intermediate
field $k \subset K_0 \subset K$ such that
$\trdeg_k K_0 \le n$. This implies that
$\ed_{k}\Forms_{n, 2} \leq n$ and $\ed_{k}\Hypersurf_{n, 2} \leq n-1$.
The opposite inequalities follow from \cite[Proposition 16.2(b)]{ber}.

\smallskip (d) By Lemma~\ref{lem.binary}, $\ed_{k}\Forms_{2, 3} \le 2$ and
$\ed_{k}\Hypersurf_{2, 3} \le 1$, respectively. On the other hand,
by~\cite[Proposition 16.2(c)]{ber}, for the generic binary form $F_{\rm gen}$ in
three variables (as in~\eqref{e.generic-form})
and the hypersurface $H_{\rm gen}$ it cuts out in
$\PP^1$, we have $\ed_{k}F_{\rm gen} = 2$ and $\ed_{k}H_{\rm gen} = 1$.

\smallskip
Part (e) is proved in a similar manner, by combining Lemma~\ref{lem.binary} 
with~\cite[Proposition 16.2(d)]{ber}.

\smallskip (f) The identity $\ed_{k}\Hypersurf_{3, 3} = 3$ is the main result
of~\cite{bf1}. By~\eqref{e.+1}, $\ed_{k}\Forms_{3, 3} \le 4$.

In order to show that equality holds, it suffices to prove that the essential
dimension $\ged_{k}[X_{3, 3}/\GL_3]$ of the generic form $F_{\rm gen}$ of degree $3$ in $3$ variables 
 is at least $4$.  
By~\cite[Theorem A]{richardson2}
the $\GL_3$-action on $X_{3, 3}$ has a stabilizer in general position. Denote it
by $S_{3, 3}$, as in the proof of Lemma~\ref{lem.Gamma}. As we mentioned there
(and in Example~\ref{ex.amenable2}), $S_{3, 3}$ is a finite subgroup of $\GL_3$.
 Since the dimension of  $[X_{3, 3}/\GL_3]$ is $1$, by \cite[Lemma 15.4 and
Proposition 5.5(c)]{ber} $\ed_{k}F_{\rm gen} \ge \ed_{\overline{k}}(S_{3, 3}) +
1$, where $\ed_{k}S_{3, 3}$ denotes the essential dimension of the finite group
$S_{3, 3}$. (Note that in~\cite{ber} the symbol $\phi_{n, d}$ was used in place
of $F_{\rm gen}$.) Thus it suffices to show that $\ed_{\overline{k}}(S_{3, 3})
\ge 3$.

To get a better idea about the structure of $S_{3, 3}$, note that a form in
$X_{3, 3} (\overline{k})$ in general position is a scalar multiple of $x_1^3 +
x_2^3 + x_3^3  + 3a x_1 x_2 x_3$ for some $a \in \overline{k}$.  Hence, $S$
contains a non-abelian subgroup $H$ of order $27$, generated by diagonal
permutation matrices $\diag ( \zeta_1, \zeta_2, \zeta_3)$, where $\zeta_1$,
$\zeta_2$ and $\zeta_3$ are cube roots of unity satisfying $\zeta_1 \zeta_2
\zeta_3 = 1$, and the permutation matrices cyclically permuting $x_1$, $x_2$ and
$x_3$. Now \begin{equation} \label{e.S33} \ed_{\overline{k}}(S) \ge
\ed_{\overline{k}}(H) \ge 3 \, , \end{equation} where the second inequality is a
consequence of the Karpenko-Merkurjev theorem; see~\cite[Theorem
1.3]{meyer-reichstein}. This completes the proof of part (f). \end{proof}

\begin{remark} \hfil \begin{enumeratei}

\item Since $S$ is a finite subgroup of $\GL_3$, it has a natural faithful
$3$-dimensional representation. Hence, $\ed_{k}S_{3, 3} \le 3$, and both
inequalities in~\eqref{e.S33} are actually equalities.

\item The proof of part (e) shows that $\ed_{k}F_{\rm gen} = 4$, where $F_{\rm
gen}$ is the generic form of degree 3 in 3 variables, as
in~\eqref{e.generic-form}. This answers an open question posed after the
statement of Proposition 16.2 in~\cite{ber}.
\end{enumeratei} \end{remark}

\section{Essential dimension of singular curves}
\label{sect.curves}

In this section we use our new
Genericity Theorem~\eqref{thm:genericity} 
to strengthen~\cite[Theorem~7.3]{brv-jems} on the essential
dimension of the stack on (not necessarily smooth)
local complete intersection curves with finite
automorphism group presented in \cite[Theorem~7.3]{brv-jems}. 
Let us recall the set-up.  Denote by
$\fM_{g,n}$ the stack of all reduced $n$-pointed local complete intersection
curves of genus $g$, that is, the algebraic stack over $\spec k$ whose objects
over a $k$-scheme $T$ are finitely presented proper flat morphisms 
$\pi \colon C \arr T$, together with $n$ sections 
$s_1, \dots, s_n \colon T \arr C$,
where $C$ is an algebraic space, the geometric fibers of $\pi$
are connected reduced
local complete intersection curves of genus $g$,
and the image of each $s_i$ is contained in the smooth 
locus of $C \arr T$. (We do not require the images of the sections 
to be disjoint.)

The stack $\fM_{g,n}$ contains the stack $\cM_{g,n}$ of smooth $n$-pointed curves of
genus $g$ as an open substack (here the sections are supposed to be disjoint). By standard results in deformation theory, every
reduced local complete intersection curve is unobstructed, and is a limit of
smooth curves. Furthermore there is no obstruction to extending the sections,
since these map into the smooth locus. Therefore $\fM_{g,n}$ is smooth and
connected, and $\cM_{g,n}$ is dense in $\fM_{g,n}$. However, the stack
$\fM_{g,n}$ is very large (it is certainly not of finite type), and in fact it
is very easy to see that its essential dimension is infinite. Assume that we are in the stable range, i.e., $2g - 2 + n > 0$: then
in \cite{brv-jems}
we show that the essential dimension of the open substack
$\fM_{g,n}^{\mathrm{fin}}$ of $\fM_{g,n}$ of curves with finite automorphism
group equals the essential dimension of $\cM_{g,n}$.

Let $C$ be an object of $\fM_{g,n}$ defined over an algebraically closed field
$K$. We say that $C$ is \emph{reductive} if the automorphism group scheme
$\underaut_{K}C$ is reductive. The marked curve $C$ is \emph{not} reductive if
and only the smooth part $C_{\rm sm} \subseteq C$ contains a component that is
isomorphic to $\AA^{1}_{K}$ and contains no marked points. A reductive object of
$\fM_{g,n}$ is an object $C \arr S$, whose geometric fibers over $S$ are
reductive. It is not hard to see that the reductive objects form an open
substack $\fM^{\mathrm{red}}_{g,n}$ of $\fM_{g,n}$. Then our new genericity
theorem applies, and allows to conclude that the essential dimensions of
$\fM^{\mathrm{red}}_{g,n}$ and of $\cM_{g,n}$ are the same. From
\cite[Theorem~1.2]{brv-jems} we obtain the following.

\begin{theorem} \label{thm:ed-curves} If $2g-2+n > 0$ and 
the characteristic of $k$ is $0$, 
then \[ \ed_{k}\fM^{\mathrm{red}}_{g,n} = \begin{cases} 2         &
\text{if }(g,n)=(1,1), \\ 5         & \text{if }(g,n)=(2,0),\\ 3g-3 + n  &
\text{otherwise}. \end{cases} \] \end{theorem}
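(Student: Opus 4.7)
My plan is to deduce the theorem by combining three ingredients: the new Genericity Theorem~\ref{thm:genericity}, the Genericity Theorem for \dm stacks of~\cite[Theorem 6.1]{brv-jems}, and the known essential dimension of $\cM_{g,n}$ from~\cite[Theorem 1.2]{brv-jems}. The strategy is to show that
\[
\ed_{k}\fM^{\mathrm{red}}_{g,n} = \ed_{k}\cM_{g,n},
\]
and then quote~\cite[Theorem~1.2]{brv-jems} for the explicit numerical value. The lower bound $\ed_{k}\cM_{g,n}\leq \ed_{k}\fM^{\mathrm{red}}_{g,n}$ is immediate, since $\cM_{g,n}$ is an open substack of $\fM^{\mathrm{red}}_{g,n}$ and essential dimension is monotone under inclusion of open substacks (every object over a field of $\cM_{g,n}$ is also an object of $\fM^{\mathrm{red}}_{g,n}$).

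The real work is the upper bound, and the first step is to verify that $\fM^{\mathrm{red}}_{g,n}$ is an amenable stack. Smoothness and connectedness are inherited from $\fM_{g,n}$ (as noted in the excerpt), and the stack is locally of finite type because the deformation theory of reduced l.c.i.\ curves is finitely presented. The diagonal of $\fM_{g,n}$ is representable by quasi-affine schemes (automorphisms of a curve form an affine group scheme over the base), so the diagonal of the open substack $\fM^{\mathrm{red}}_{g,n}$ is as well. Finally, $\cM_{g,n}$ sits inside $\fM^{\mathrm{red}}_{g,n}$ as a dense open \dm substack, giving condition~(c) of amenability.

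The second step is to apply Theorem~\ref{thm:genericity}: I need to check that for every field extension $L/k$ and every $\xi \in \fM^{\mathrm{red}}_{g,n}(\spec L)$, the automorphism group scheme $\underaut_{L}\xi$ is reductive. This is built into the definition of $\fM^{\mathrm{red}}_{g,n}$: the geometric fibers have reductive automorphism group scheme, and reductivity of an affine group scheme of finite type over a field is a geometric property, so $\underaut_{L}\xi$ is reductive if and only if $\underaut_{\bar L}\xi_{\bar L}$ is. Theorem~\ref{thm:genericity} then yields $\ed_{k}\xi \leq \ged_{k}\fM^{\mathrm{red}}_{g,n}$ for every such $\xi$, hence $\ed_{k}\fM^{\mathrm{red}}_{g,n} \leq \ged_{k}\fM^{\mathrm{red}}_{g,n}$. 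Using the \dm Genericity Theorem from~\cite[Theorem 6.1]{brv-jems} applied to the dense open \dm substack $\cM_{g,n}$, we have $\ged_{k}\fM^{\mathrm{red}}_{g,n} = \ed_{k}\cM_{g,n}$, completing the upper bound.

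The main obstacle I anticipate is purely bookkeeping: making sure that the amenability hypotheses (particularly ``locally of finite type'' and quasi-affine diagonal) are verified carefully for $\fM^{\mathrm{red}}_{g,n}$, given the warning in the excerpt that the ambient stack $\fM_{g,n}$ is pathological enough to have infinite essential dimension. Once these technical conditions are established and the reductivity of $\underaut_{L}\xi$ is recorded, the argument is a direct citation of Theorem~\ref{thm:genericity} and~\cite[Theorem~1.2]{brv-jems}; no new curve-theoretic input is required beyond what is already in the excerpt.
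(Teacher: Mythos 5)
Your proposal is correct and follows essentially the same route as the paper: verify that $\fM^{\mathrm{red}}_{g,n}$ is amenable with $\cM_{g,n}$ as a dense open \dm substack, check that every object over a field has reductive automorphism group scheme, apply Theorem~\ref{thm:genericity} to get $\ed_k\fM^{\mathrm{red}}_{g,n} = \ged_k\fM^{\mathrm{red}}_{g,n} = \ed_k\cM_{g,n}$, and then quote the numerical values from~\cite[Theorem~1.2]{brv-jems}. The paper's exposition is extremely terse (the proof is a single sentence before the theorem statement), whereas you spell out the amenability checks and the geometric invariance of reductivity; these are exactly the right details to record.
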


\bibliographystyle{amsalpha} 

\def\cprime{$'$}
\providecommand{\bysame}{\leavevmode\hbox to3em{\hrulefill}\thinspace}
\providecommand{\MR}{\relax\ifhmode\unskip\space\fi MR }
\providecommand{\MRhref}[2]{%
  \href{http://www.ams.org/mathscinet-getitem?mr=#1}{#2}
}
\providecommand{\href}[2]{#2}

\end{document}